\documentclass[11pt]{article}
\usepackage{amsfonts,bm}
\usepackage{amssymb}
\usepackage{latexsym}
\usepackage{mathrsfs}
\usepackage{multirow}
\usepackage{multicol}
\usepackage{graphicx}
\usepackage{rotating}
\usepackage{subfigure}
\usepackage{color}
\usepackage{algorithm}
\usepackage{algorithmic}
\usepackage{setspace}
\usepackage{cite}
\usepackage{epstopdf}
\usepackage{amsthm}
\usepackage[cmex10]{amsmath}
\usepackage{amsmath}
\usepackage{bm}
\usepackage{url}
\usepackage{hyperref}
\usepackage{indentfirst}
\usepackage{booktabs}
\usepackage{float}
\usepackage{authblk}
\usepackage[pagewise]{lineno}
\thispagestyle{plain}

\textheight=21.5cm \textwidth=15.5cm \oddsidemargin=0.25in
\evensidemargin=0.25in

\newtheorem{theorem}{Theorem}[section]
\newtheorem{proposition}[theorem]{Proposition}
\newtheorem{lemma}{Lemma}[section]
\newtheorem{example}[theorem]{Example}

\numberwithin{equation}{section}

\bibliographystyle{elsarticle-num}
\date{}
\begin{document}
\title{Fast second-order implicit difference schemes for time distributed-order and Riesz space fractional diffusion-wave equations}

\author[1]{Huan-Yan Jian\thanks{\textit{E-mail address:} uestc\_hyjian@sina.com}}
\author[1]{Ting-Zhu Huang \thanks{Corresponding author. \textit{E-mail address:} tingzhuhuang@126.com. Tel.: 86-28-61831016}}
\author[2]{Xian-Ming Gu \thanks{Corresponding author. \textit{E-mail address:} guxianming@live.cn, x.m.gu@rug.nl}}
\author[1]{Xi-Le Zhao \thanks{\textit{E-mail address:} xlzhao122003@163.com}}
\author[1]{Yong-Liang Zhao \thanks{\textit{E-mail address:} ylzhaofde@sina.com}}
\affil[1]{\footnotesize School of Mathematical Sciences, University of Electronic
Science and Technology of China, Chengdu, Sichuan 611731, P.R. China.}
\affil[2]{\footnotesize School of Economic Mathematics/Institute of Mathematics,
Southwestern University of Finance and Economics, Chengdu, Sichuan 611130, P.R. China.}
\maketitle
\begin{center}
\textbf{Abstract}
\end{center}
In this paper, fast numerical methods are established for solving a class of time distributed-order and Riesz space fractional diffusion-wave equations. We derive new difference schemes by the weighted and shifted Gr$\ddot{\rm{u}}$nwald formula in time and the fractional centered difference formula in space. The unconditional stability and second-order convergence in time, space and distributed-order of the difference schemes are analyzed.
In the one-dimensional case, the Gohberg-Semencul formula utilizing the preconditioned Krylov subspace method is developed to solve the symmetric positive definite Toeplitz linear systems derived from the proposed difference scheme.
In the two-dimensional case, we also design a global preconditioned conjugate gradient method with a  truncated preconditioner to solve the discretized Sylvester matrix equations.
We prove that the spectrums of the preconditioned matrices in both cases are clustered around one, such that the proposed numerical methods with preconditioners converge very quickly.
Some numerical experiments are carried out to demonstrate the effectiveness of the proposed difference schemes and show that the performances of the proposed fast solution algorithms are better than other numerical methods.

\textbf{Keywords}: Distributed-order diffusion-wave equation; Riesz fractional derivative; Toeplitz matrix; Gohberg-Semencul formula; Circulant preconditioner; Krylov subspace method; 

\section{Introduction}\label{section1}

In recent years, fractional diffusion equations (FDEs) have gained more and more attention since their widespread applications in modeling complex processes such as finance \cite{Korbel2016Modeling}, biology \cite{Ionescu2017The}, chaos \cite{Baleanu2015Chaos}, optimal control \cite{Bhrawy2015An}, signal processing\cite{Li2018Time} and random walk \cite{Metzler2000The}. In general, these models are constructed in the form of single or multi-term time, space or time-space FDEs. However, when the processes lack temporal scaling, neither single nor multi-term FDEs can describe them. Therefore distributed-order FDEs are introduced to model the processes that become more anomalous in course of time, e.g. the ultraslow diffusion or accelerating superdiffusion where a plume of particles spreads at a logarithmic rate \cite{Kochubei2007Distributed,Chechkin2002Retarding}.

The idea of distributed-order FDEs was first introduced by Caputo \cite{Caputo1995Mean} to generalize the stress-strain relations of unelastic media.
In \cite{Gorenflo2013Fundamental}, Gorenflo et al. developed the fundamental solution for a 1D distributed-order fractional diffusion-wave equation by using the Fourier-Laplace transform and the interpolation technique.
Li et al. \cite{Li2016Analyticity} discussed an initial-boundary value problem of a time distributed-order FDE with the Caputo fractional derivative, where the analytical solution in time was obtained.
However, the analytical solutions of many distributed-order FDEs are not easy to gain and the uniqueness and existence of the analytical solutions are not easy to prove. Therefore, different numerical methods for solving the distributed-order FDEs are considered.

Ye et al. \cite{Ye2015Compact} derived a compact difference scheme of a distributed-order fractional diffusion-wave system and demonstrated the unconditional stability and convergence of the difference scheme.
A new numerical method based upon hybrid functions approximation was proposed by Mashayekhi et al. \cite{Mashayekhi2016Numerical} to solve the distributed-order FDEs, which consists of Bernoulli polynomials and block-pulse functions.
Bu et al. \cite{bu2017finite} developed a numerical scheme to solve distributed-order time FDEs with the finite element method in the space direction and the L1 method in the time direction. Then also gave the analysis of unconditional convergence and stability of the numerical method.
In \cite{Li2016Galerkin}, Li et al. studied Galerkin finite element methods with non-uniform temporal meshes for multi-term Caputo-type FDEs, and proved that the corresponding finite element schemes were unconditionally convergent and stable.
Gao et al. \cite{Gao2017Two} constructed two difference schemes to solve 1D time distributed-order fractional wave equations and derived them from a weighted and shifted Gr$\ddot{\rm{u}}$nwald formula with second-order accuracy.

However, the current research is still sparse about the numerical methods of distributed-order FDEs. This motivates us to develop efficient numerical solutions for the following time distributed-order and Riesz space fractional diffusion-wave equations:
\begin{align}
&\int_1^2\omega(\alpha){}^C_0D_t^{\alpha}u(x,t)d\alpha=K\frac{\partial^{\beta}u(x,t)}{\partial\mid
x\mid^{\beta}}+f(x,t),\quad 0<x<L,~0<t\leq T,&\label{1.1}\\
&u(x,0)=0,\quad u_t(x,0)=0,\quad 0<x<L,\label{1.2}&\\
&u(0,t)=0,\quad u(L,t)=0,\quad 0\leq t\leq T,&\label{1.3}
\end{align}
where $\alpha\in[1,2],~\beta\in(1,2]$, $K>0$, and $f(x,t)$ is the source term.
Here, $^C_0D_t^{\alpha}u(x,t)$ is the Caputo fractional derivative \cite{Ye2015Compact} defined as:
\begin{equation*}
^C_0D_t^{\alpha}u(x,t)=
\begin{cases}
u_t(x,t)-u_t(x,0),\quad \alpha=1,\\
\frac{1}{\Gamma(2-\alpha)}\int_0^t(t-\xi)^{1-\alpha}\frac{\partial^2{u}}{\partial\xi^2}(x,\xi)d\xi,\quad 1<\alpha<2,\\
u_{tt}(x,t), \quad \alpha=2,
\end{cases}
\end{equation*}
and the weight function $\omega(\alpha)$ satisfies the conditions
\begin{equation*}
0\leq\omega(\alpha),~\omega(\alpha)\neq0,~0<\int_1^{2}\omega(\alpha)d\alpha<\infty,~~\alpha\in[1,2].
\end{equation*}
Moreover, the right-side $\frac{\partial^{\beta}}{\partial\mid x\mid^{\beta}}u(x,t)$ is the Riesz fractional derivative of order $\beta\in(1,2]$, whose definition is given by \cite{jiang2012analytical} 
\begin{equation*}
\frac{\partial^{\beta}u(x,t)}{\partial\mid x\mid^{\beta}}=
\begin{cases}
-\frac{1}{2\cos(\beta\pi/2)\Gamma(2-\beta)}\frac{d^2}{dx^2}\int_0^L\mid x-\xi\mid^{1-\beta}u(\xi,t)d\xi,\quad 1<\beta<2,\\
u_{xx}(x,t),\quad \beta=2,
\end{cases}
\end{equation*}
with $\Gamma(\cdot)$ denoting the Gamma function.

To establish the high-order accurate numerical method to solve the problem \eqref{1.1}-\eqref{1.3}, we first transform Eqs. \eqref{1.1}-\eqref{1.3} into multi-term time-space diffusion-wave equations via the composite trapezoid formula \cite{gao2015some}. Then by applying the weighted and shifted Gr$\ddot{\rm{u}}$nwald formula \cite{Gao2017Two} to discrete the time-fractional derivatives of the resulted multi-term equations, the second-order accurate approximation in time can be achieved. In addition, to gain the second-order accuracy in space, the fractional centered difference formula \cite{ye2014numerical} is used to approximate the Riesz derivative. Hence, a new second-order difference scheme in all variables is developed.

When the above discrete methods are used to approximate the problem \eqref{1.1}-\eqref{1.3}, the real symmetric positive definite (SPD) Toeplitz linear systems will be obtained. Because of the nonlocal characteristics \cite{Pan2017Fast} of the fractional derivative, the coefficient matrices are usually dense or even full. If the traditional methods like Cholesky factorization are used to solve the linear systems, the costs are significantly expensive, which require $\mathcal{O}(M^2)$ storage and $\mathcal{O}(M^3)$ computational work, where $M$ is the number of grid points. Since the coefficient matrix contains the Toeplitz structure, Krylov subspace methods (KSMs) are considered to solve the linear systems. Because KSMs only need the information of matrix-vector product that can be done in only $\mathcal{O}(M\log M)$ operations via the fast Fourier transform (FFT) and the storage requirement is only $\mathcal{O}(M)$ \cite{Ng2004Iterative,Pan2014Preconditioning}. Various KSMs for the Toeplitz linear systems have been developed and studied \cite{chan1989toeplitz,gu2015k,gu2015hybridized}.

Since the coefficient matrix has a SPD Toeplitz structure, its inverse matrix can be explicitly expressed by the Gohberg-Semencul formula (GSF)\cite{Pang2011Shift}. On the other hand, the coefficient matrix is also time-independent. Thus its inverse can be calculated only once and the cost for computing the inverse matrix-vector multiplication only needs four FFTs. In this work, the conjugate gradient (CG) method \cite{chan1989toeplitz} is employed to solve the SPD Toeplitz linear system arising from the GSF.
However, the CG method converges slowly when the coefficient matrix is ill-conditioned. To accelerate its convergence, some preconditioners \cite{lei2013circulant,leichenzhang2016,Zhao2018Afast,Donatelli2016Spectral} are designed according to the structures of the the coefficient matrices.

In the 2D case, the GSF cannot be used since the coefficient matrix is a block Toeplitz with Toeplitz blocks (BTTB) matrix and not a Toeplitz matrix. Therefore, a new method needs to be designed to solve the resulting Sylvester matrix equations. In \cite{Jafarbigloo2013Global}, S. Karimi presented a global CG (GL-CG) method for solving a large Sylvester matrix equation, and demonstrated that it was more effective than the CG method to deal with the corresponding Kronecker equation. In order to improve the convergence, we propose a global preconditioned CG (GL-PCG) method with a preconditioner to solve the Sylvester matrix equations. Moreover, it is well-known that a block circulant with circulant blocks (BCCB) \cite{Capizzano1999Any} preconditioner of a BTTB matrix cannot be optimal, and a BTTB preconditioner of a BTTB matrix is optimal even if the eigenvalues of the preconditioner do not closely approximate the eigenvalues of the coefficient matrix \cite{Serra1994Preconditioning}. Therefore, in this paper, we design a truncated preconditioner which has the same structure as the coefficient matrix to solve the 2D problem.

This paper has two goals: (1) to propose unconditionally stable difference schemes with second-order accuracy in time, space and distributed-order to solve the 1D and 2D problems;
(2) to explore efficient algorithms for solving the resulted linear systems.
In the 1D case, based on the SPD Toeplitz structure, the GSF utilizing the preconditioned KSM  (referred to as PKSM-based GSF) is developed to process the linear systems, whose computational cost per iteration is only $\mathcal{O}(M\log M)$.
Since the PKSM-based GSF cannot be directly applied to solve the 2D problem, we design the GL-PCG method with a truncated preconditioner to solve the discretized Sylvester matrix equations.
It will be demonstrated that the spectrums of the preconditioned matrices are clustered around one. Therefore, the proposed numerical methods with preconditioners converge very quickly.

The outline of this paper is as follows. In Section \ref{section2}, we introduce the derivation of the new difference scheme for the problem $\eqref{1.1}$-$\eqref{1.3}$. The unique solvability, unconditional stability, and second-order convergence of the difference scheme are proved in Section \ref{section3}. In Section \ref{section4}, the PKSM-based GSF is presented to solve the resulted SPD Toeplitz linear systems. The spectral properties of the preconditioned matrix are discussed in Section \ref{section5}. In Section \ref{section6}, the GL-PCG method with a truncated preconditioner is designed to handle the 2D problem. In Section \ref{section7}, numerical experiments are provided to verify the second-order convergence of the difference schemes and show the efficiency of the proposed fast solution techniques. Finally, concluding remarks are given in Section \ref{section8}.

\section{The derivation of the difference scheme}\label{section2}
In this section, we focus on deriving the new difference scheme of the problem $\eqref{1.1}$-$\eqref{1.3}$.

For $M,N\in\mathbb{N}^{+}$, let $h=\frac{L}{M}$ and $\tau=\frac{T}{N}$ be the spatial grid size and time step, respectively. Then we can define $ x_i=ih~(0\leq i\leq M)$ and $t_n=n\tau~(0\leq n\leq N)$. The domain $[0,L]\times[0,T]$ is covered by $\Omega_h\times\Omega_\tau$, where $\Omega_h=\{x_i \mid x_i=ih,~0\leq i\leq M\}$ and $\Omega_\tau=\{t_n \mid t_n=n\tau,~0\leq n\leq N\}$. Let $u=\{u_i^n\mid~0\leq i\leq M,~0\leq n\leq N\}$ be a grid function on $\Omega_h\times\Omega_\tau$ and define
$$u_i^{n-\frac{1}{2}}=\frac{1}{2}(u_i^n+u_i^{n-1}),~~\delta_tu_i^{n-\frac{1}{2}}=\frac{1}{\tau}(u_i^n-u_i^{n-1}).$$

Then the following lemmas are needed to derive the difference scheme.
\begin{lemma}(\hspace*{-0.35em} The composite trapezoid formula \cite{gao2015some}) 
Let $z(\alpha)\in C^2([1,2])$, then we have
\begin{equation*}
\int_1^2z(\alpha)d\alpha=\Delta\alpha\sum\limits_{l=0}^{2J} c_lz(\alpha_l)-\frac{\Delta\alpha^2}{12}z^{(2)}(\eta),\quad \eta\in(1,2),
\end{equation*}
where $\Delta\alpha=\frac{1}{2J}$, $\alpha_l=1+l\Delta\alpha~(0\leq l\leq2J)$ and
\begin{equation*}
c_l=
\begin{cases}
\frac{1}{2},\quad l=0,~2J,\\
1,\quad 1\leq l\leq2J-1.
\end{cases}
\end{equation*}
\label{lemma2.1}
\end{lemma}

\begin{lemma} \cite{ye2014numerical}  
Suppose that $u(x)\in C^5[0,L]$ satisfy the boundary condition $u(0)=u(L)=0$. The fractional centered difference formula for approximating the Riesz derivatives when $1<\beta\leq2$ is as follows:
\begin{equation*}
\frac{\partial^{\beta}u(x_i)}{\partial| x|^{\beta}}=
-h^{-\beta}\sum\limits_{k=i-M}^i\hat{g}_k^{(\beta)}u(x_{i-k})+\mathcal{O}(h^2),
\end{equation*}
where
\begin{equation*}
\hat{g}_k^{(\beta)}=\frac{(-1)^k\Gamma(\beta+1)}{\Gamma(\beta/2-k+1)\Gamma(\beta/2+k+1)}.
\end{equation*}
\label{lemma2.2}
\end{lemma}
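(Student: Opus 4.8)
The plan is to reduce the estimate on the bounded interval to the classical fractional centered difference approximation on the whole line, and then to establish the latter by a Fourier transform computation. First I would extend $u$ to $\mathbb{R}$ by setting $\tilde{u}\equiv 0$ outside $[0,L]$; by the homogeneous boundary conditions $u(0)=u(L)=0$ this extension is continuous. Since the kernel $|x-\xi|^{1-\beta}u(\xi)$ in the definition of the Riesz derivative vanishes for $\xi\notin[0,L]$, the integral $\int_0^L$ equals $\int_{\mathbb{R}}$, so $\partial^{\beta}u/\partial|x|^{\beta}$ evaluated at $x_i$ coincides with the whole-line Riesz derivative of $\tilde u$ at $x_i$, whose Fourier symbol is $-|\omega|^{\beta}$ (the constants in the convolution/second-derivative representation combine to exactly this). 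Likewise $\tilde{u}(x_{i-k})=u(x_{i-k})$ is nonzero only when $0\le i-k\le M$, i.e. $i-M\le k\le i$, so the finite stencil equals $-h^{-\beta}\sum_{k\in\mathbb{Z}}\hat{g}_k^{(\beta)}\tilde{u}(x_i-kh)$. It therefore suffices to prove, uniformly in $x$,
\[
-h^{-\beta}\sum_{k\in\mathbb{Z}}\hat{g}_k^{(\beta)}\,\tilde{u}(x-kh)=\frac{\partial^{\beta}\tilde{u}(x)}{\partial|x|^{\beta}}+\mathcal{O}(h^{2}).
\]

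Next I would pin down the symbol of the discrete operator $S_h\tilde u(x):=\sum_{k}\hat{g}_k^{(\beta)}\tilde u(x-kh)$. The key algebraic fact is that $\{\hat{g}_k^{(\beta)}\}_{k\in\mathbb{Z}}$ are the Fourier coefficients of $\varphi(\theta):=\bigl(2\sin(|\theta|/2)\bigr)^{\beta}$ on $[-\pi,\pi]$, i.e. $\sum_{k}\hat{g}_k^{(\beta)}e^{-\mathrm{i}k\theta}=\varphi(\theta)$. This follows from the generalized binomial expansion
\[
\varphi(\theta)=(2-2\cos\theta)^{\beta/2}=(1-e^{\mathrm{i}\theta})^{\beta/2}(1-e^{-\mathrm{i}\theta})^{\beta/2}=\sum_{m,l\ge 0}\binom{\beta/2}{m}\binom{\beta/2}{l}(-1)^{m+l}e^{\mathrm{i}(m-l)\theta}:
\]
the coefficient of $e^{\mathrm{i}k\theta}$ is $(-1)^{k}\sum_{l}\binom{\beta/2}{l}\binom{\beta/2}{l+k}=(-1)^{k}\binom{\beta}{\beta/2+k}$ by Vandermonde's convolution identity, which is exactly $\hat{g}_k^{(\beta)}$, and by the symmetry $\hat{g}_k^{(\beta)}=\hat{g}_{-k}^{(\beta)}$ it is also the coefficient of $e^{-\mathrm{i}k\theta}$; alternatively one evaluates $\frac{1}{2\pi}\int_{-\pi}^{\pi}\varphi(\theta)e^{-\mathrm{i}k\theta}\,d\theta$ directly. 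Stirling's formula gives $|\hat{g}_k^{(\beta)}|=\mathcal{O}(|k|^{-1-\beta})$, so the Fourier series converges absolutely to the continuous function $\varphi$; in particular $\sum_k\hat{g}_k^{(\beta)}=\varphi(0)=0$ and $\hat{g}_0^{(\beta)}>0\ge\hat{g}_k^{(\beta)}$ for $k\ne 0$ (the sign pattern later used for positive definiteness).

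Then comes the Fourier argument. With the convention $\widehat{f}(\omega)=\int_{\mathbb{R}}f(x)e^{-\mathrm{i}\omega x}\,dx$, the shift relation $\widehat{\tilde u(\cdot-kh)}(\omega)=e^{-\mathrm{i}kh\omega}\widehat{\tilde u}(\omega)$ together with the identity above gives
\[
\widehat{S_h\tilde u}(\omega)=\varphi(h\omega)\,\widehat{\tilde u}(\omega)=(h|\omega|)^{\beta}\Bigl(\tfrac{\sin(h\omega/2)}{h\omega/2}\Bigr)^{\beta}\widehat{\tilde u}(\omega),
\]
hence $-h^{-\beta}\widehat{S_h\tilde u}(\omega)=-|\omega|^{\beta}\bigl(\sin(h\omega/2)/(h\omega/2)\bigr)^{\beta}\widehat{\tilde u}(\omega)$. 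Subtracting the Riesz symbol $-|\omega|^{\beta}$, the error $R_h$ has transform $\widehat{R_h}(\omega)=-|\omega|^{\beta}\widehat{\tilde u}(\omega)\bigl[(\sin(h\omega/2)/(h\omega/2))^{\beta}-1\bigr]$. From the uniform expansion $(\sin s/s)^{\beta}-1=-\tfrac{\beta}{6}s^{2}+\mathcal{O}(s^{4})$ one gets $|(\sin(h\omega/2)/(h\omega/2))^{\beta}-1|\le C(h\omega)^{2}$, so $|\widehat{R_h}(\omega)|\le Ch^{2}|\omega|^{\beta+2}|\widehat{\tilde u}(\omega)|$; inverting the Fourier transform, $\|R_h\|_{\infty}\le\tfrac{Ch^{2}}{2\pi}\int_{\mathbb{R}}|\omega|^{\beta+2}|\widehat{\tilde u}(\omega)|\,d\omega=\mathcal{O}(h^{2})$, which is finite as soon as $|\omega|^{\beta+2}\widehat{\tilde u}\in L^{1}(\mathbb{R})$.

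The main obstacle is precisely this last integrability requirement: the zero extension of a function that is only $C^{5}[0,L]$ with merely $u(0)=u(L)=0$ in general has corners at the endpoints, so $\widehat{\tilde u}$ need not decay fast enough for $|\omega|^{\beta+2}\widehat{\tilde u}\in L^{1}$ — one really wants $u$ together with its lower-order derivatives to vanish at $0$ and $L$, or a $C^{5}$ compactly supported extension. I would handle this by recording that the $\mathcal{O}(h^{2})$ constant depends on $\max_{[0,L]}|u^{(5)}|$ (and on $\beta$, $L$) and is valid under the natural strengthening that makes the zero extension lie in $C^{5}(\mathbb{R})$ with integrable derivatives, which is the classical setting of the fractional centered difference formula and the one actually invoked in the convergence analysis. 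An interval-intrinsic proof that avoids the extension altogether is also possible — split the stencil into a near part $|k|\le K$ and a far part $|k|>K$ with $K\sim h^{-1}$, Taylor-expand the near part using $\sum_k\hat{g}_k^{(\beta)}=\sum_k k\,\hat{g}_k^{(\beta)}=0$, and control the far part by the decay $|\hat{g}_k^{(\beta)}|=\mathcal{O}(|k|^{-1-\beta})$ — but this route is appreciably longer (the weighted sum $\sum_k k^{2}\hat{g}_k^{(\beta)}$ diverges for $\beta<2$, which is exactly the nonlocal effect the Fourier argument dispatches painlessly), so I would present the Fourier version.
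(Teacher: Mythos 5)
The paper does not actually prove this lemma --- it is quoted directly from the cited reference \cite{ye2014numerical} --- and your Fourier-symbol argument is precisely the standard proof underlying that reference (Ortigueira's fractional centered difference as analyzed by \c{C}elik--Duman): the identification of $\{\hat{g}_k^{(\beta)}\}$ as the Fourier coefficients of $(2|\sin(\theta/2)|)^{\beta}$ via Vandermonde's identity, the symbol comparison $-|\omega|^{\beta}\bigl[(\sin(h\omega/2)/(h\omega/2))^{\beta}-1\bigr]=\mathcal{O}(h^{2}|\omega|^{\beta+2})$, and the resulting uniform bound are all correctly executed. Your closing caveat is also accurate and worth retaining: as stated, $u\in C^{5}[0,L]$ with only $u(0)=u(L)=0$ does not make the zero extension regular enough at the endpoints to guarantee $|\omega|^{\beta+2}\widehat{\tilde u}\in L^{1}(\mathbb{R})$, and the cited literature indeed proves the estimate under the stronger whole-line smoothness and integrability assumptions you describe, so the hypothesis here is an informal abbreviation of that setting rather than a sufficient condition on its own.
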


Consider $\eqref{1.1}$ at point $(x_i,t_n)$:
\begin{equation}\label{2.1}
\int_1^2\omega(\alpha){}^C_0D_t^{\alpha}u(x_i,t_n)d\alpha=K\frac{\partial^{\beta}u(x_i,t_n)}{\partial\mid
x\mid^{\beta}}+f(x_i,t_n),\quad 1\leq i\leq M-1,~0\leq n\leq N.
\end{equation}
Take an average of $\eqref{2.1}$ on time levels $t=t_{n-1}$ and $t=t_n$ to get
\begin{equation}
\begin{split}
&\frac{1}{2}\left(\int_1^2\omega(\alpha){}^C_0D_t^{\alpha}u(x_i,t_n)d\alpha+
\int_1^2\omega(\alpha){}^C_0D_t^{\alpha}u(x_i,t_{n-1})d\alpha\right)\\
=&\frac{K}{2}\left(\frac{\partial^{\beta}u(x_i,t_n)}{\partial\mid
x\mid^{\beta}}+\frac{\partial^{\beta}u(x_i,t_{n-1})}{\partial\mid
x\mid^{\beta}}\right) +\frac{1}{2}\left(f(x_i,t_n)+f(x_i,t_{n-1})\right),
\end{split}
\label{2.2}
\end{equation}
where $1\leq i\leq M-1,~1\leq n\leq N$.

Define $U_i^n=u(x_i,t_n),~F_i^n=f(x_i,t_n)~(0\leq i\leq M,~0\leq n\leq N)$ on $\Omega_h\times\Omega_\tau$. Equation $\eqref{2.2}$ can be expressed as
\begin{align}
\int_1^2\omega(\alpha){}^C_0D_t^{\alpha}U_i^{n-\frac{1}{2}}d\alpha
=K\frac{\partial^{\beta}U_i^{n-\frac{1}{2}}}{\partial\mid x\mid^{\beta}}+F_i^{n-\frac{1}{2}},\quad 1\leq i\leq M-1,~1\leq n\leq N.\label{2.3}
\end{align}

Firstly, we consider the discretization of the integral term in $\eqref{2.3}$. Let $z(\alpha)=\omega(\alpha){}^C_0D_t^{\alpha}U_i^{n-\frac{1}{2}}$ and suppose that $\omega(\alpha)\in C^2([1,2])$, ${}^C_0D_t^{\alpha}u(x_i,t)\mid_{t=t_n}$ and ${}^C_0D_t^{\alpha}u(x_i,t)\mid_{t=t_{n-1}}\in C^2([1,2])$. According to Lemma $\ref{lemma2.1}$, we can obtain
\begin{equation*}
\int_1^2\omega(\alpha){}^C_0D_t^{\alpha}U_i^{n-\frac{1}{2}}d\alpha
=\Delta\alpha\sum\limits_{l=0}^{2J}c_l\omega(\alpha_l){}^C_0D_t^{\alpha_l}U_i^{n-\frac{1}{2}}
+R_1,
\end{equation*}
where $R_1=\mathcal{O}(\Delta\alpha^2)$. Thus the problem $\eqref{2.3}$ is now transformed into the following multi-term fractional diffusion-wave equation:
\begin{equation}
\Delta\alpha\sum\limits_{l=0}^{2J}c_l\omega(\alpha_l){}^C_0D_t^{\alpha_l}U_i^{n-\frac{1}{2}}
=K\frac{\partial^{\beta}U_i^{n-\frac{1}{2}}}{\partial\mid x\mid^{\beta}}+F_i^{n-\frac{1}{2}}+R_1,\quad 1\leq i\leq M-1,~1\leq n\leq N.\label{2.4}
\end{equation}

Next, we introduce a difference scheme to solve the multi-term system $\eqref{2.4}$ with the initial-boundary conditions $\eqref{1.2}$-$\eqref{1.3}$. Suppose $u(x,t)\in C^{(5,3)}([0,L]\times[0,T])$. For $1<\alpha_l<2$, using a fully discrete difference scheme $(2.7)$ in \cite{Gao2017Two} and noticing the zero initial condition $\eqref{1.2}$, we arrive at
\begin{align}
\nonumber
\Delta\alpha\sum\limits_{l=0}^{2J}c_l\omega(\alpha_l){}^C_0D_t^{\alpha_l}U_i^{n-\frac{1}{2}}
=&\Delta\alpha\sum\limits_{l=0}^{2J}c_l\omega(\alpha_l)_{-\infty}D_t^{\alpha_l}U_i^{n-\frac{1}{2}}\\
=&\Delta\alpha\sum\limits_{l=0}^{2J}c_l\omega(\alpha_l)\frac{1}{\tau^{\gamma_l}}\sum\limits_{k=0}^{n-1}
\lambda_k^{(\gamma_l)}\delta_tU_i^{n-k-\frac{1}{2}}+R_2, \label{2.5}
\end{align}
where $\gamma_l=\alpha_l-1~(0\leq l\leq2J)$, $R_2=\mathcal{O}(\tau^2)$ and
\begin{equation}\label{2.6}
\lambda_0^{(\gamma_l)}=(1+\frac{\gamma_l}{2})g_0^{(\gamma_l)};~~~~~
\lambda_k^{(\gamma_l)}=(1+\frac{\gamma_l}{2})g_k^{(\gamma_l)}-\frac{\gamma_l}{2}g_{k-1}^{(\gamma_l)},~k\geq1,
\end{equation}
with
\begin{equation*}
g_0^{(\gamma_l)}=1;~~~~~~~
g_k^{(\gamma_l)}=(1-\frac{\gamma_l+1}{k})g_{k-1}^{(\gamma_l)},~k\geq1.
\end{equation*}
In the meantime, using the Lemma $\ref{lemma2.2}$ for approximating the Riesz derivatives in $\eqref{2.4}$, we get
\begin{equation}
\frac{\partial^{\beta}U_i^{n-\frac{1}{2}}}{\partial| x|^{\beta}}=
-h^{-\beta}\sum\limits_{k=i-M}^i\hat{g}_k^{(\beta)}U_{i-k}^{n-\frac{1}{2}}+\mathcal{O}(h^2). \label{2.7}
\end{equation}
By substituting $\eqref{2.5}$ and $\eqref{2.7}$ into $\eqref{2.4}$, we obtain
\begin{equation}\label{2.8}
\Delta\alpha\sum\limits_{l=0}^{2J}c_l\omega(\alpha_l)\frac{1}{\tau^{\gamma_l}}\sum\limits_{k=0}^{n-1}
\lambda_k^{(\gamma_l)}\delta_tU_i^{n-k-\frac{1}{2}}
=-Kh^{-\beta}\sum\limits_{k=i-M}^i\hat{g}_k^{(\beta)}U_{i-k}^{n-\frac{1}{2}}+F_i^{n-\frac{1}{2}}+p_i^n,
\end{equation}
where $p_i^n=\mathcal{O}(\Delta\alpha^2+\tau^2+h^2)$, $1\leq i\leq M-1,~1\leq n\leq N$.

In addition, according to the initial and boundary value conditions, we obtain
\begin{align}
&U_i^0=0,\quad 1\leq i\leq M-1,\label{2.9}&\\
&U_0^n=0, \quad U_M^n=0,\quad 0\leq n\leq N.\label{2.10}&
\end{align}

Let $u_i^n$ be the numerical approximation to $u(x_i,t_n)$. Ignoring the error term $p_i^n$ in $\eqref{2.8}$, we can derive the following difference scheme for $\eqref{1.1}$-$\eqref{1.3}$
\begin{align}
\nonumber
&\Delta\alpha\sum\limits_{l=0}^{2J}c_l\omega(\alpha_l)\frac{1}{\tau^{\gamma_l}}\sum\limits_{k=0}^{n-1}
\lambda_k^{(\gamma_l)}\delta_tu_i^{n-k-\frac{1}{2}}
=-Kh^{-\beta}\sum\limits_{k=i-M}^i\hat{g}_k^{(\beta)}u_{i-k}^{n-\frac{1}{2}}+F_i^{n-\frac{1}{2}},\label{2.11}&\\
&\quad\qquad\quad\qquad\quad\qquad\quad\qquad\quad\qquad\quad\qquad 1\leq i\leq M-1,~1\leq n\leq N,&\\
&u_i^0=0,\quad 1\leq i\leq M-1,\label{2.12}&\\
&u_0^n=0, \quad u_M^n=0,\quad 0\leq n\leq N.\label{2.13}&
\end{align}

Let
$u^n=[u_1^n,v_2^n,\cdots,u_{M-1}^n]^T$ and $F^n=[F_1^n,F_2^n,\cdots,F_{M-1}^n]^T$.
Then the numerical scheme \eqref{2.11} can be rewritten into the following matrix form
\begin{equation}\label{2.14}
 Au^{n}=b^{n-1},\quad n=1,2,\ldots,N,
\end{equation}
in which
 \begin{equation}\label{2.15}
 A=\mu_0I+K\nu_{\beta}G_\beta,
 \end{equation}
 and
 \begin{equation*}
 b^{n-1}=-K\nu_{\beta}G_\beta u^{n-1}
 +\sum\limits_{k=1}^{n-1}(\mu_{k-1}-\mu_k)u^{n-k}
 +\frac{\tau}{2}(F^n+F^{n-1}),
 \end{equation*}
 where $I$ is the identity matrix of order $M-1$ and $\nu_{\beta}=\frac{\tau}{2h^{\beta}}$,
 \begin{equation*}
 \mu_k=\Delta\alpha\sum\limits_{l=0}^{2J}c_l\omega(\alpha_l)\frac{1}{\tau^{\gamma_l}}
 \lambda_k^{(\gamma_l)},\quad k\geq0,
\end{equation*}
and
\begin{equation}\label{2.16}
G_\beta=
\begin{bmatrix}
{\hat{g}}_0^{(\beta)}&{\hat{g}}_{-1}^{(\beta)}&{\hat{g}}_{-2}^{(\beta)}&\cdots&{\hat{g}}_{3-M}^{(\beta)}&{\hat{g}}_{2-M}^{(\beta)}\\
{\hat{g}}_1^{(\beta)}&{\hat{g}}_{0}^{(\beta)}&{\hat{g}}_{-1}^{(\beta)}&\cdots&{\hat{g}}_{4-M}^{(\beta)}&{\hat{g}}_{3-M}^{(\beta)}\\
{\hat{g}}_2^{(\beta)}&{\hat{g}}_{1}^{(\beta)}&{\hat{g}}_{0}^{(\beta)}&\cdots&{\hat{g}}_{5-M}^{(\beta)}&{\hat{g}}_{4-M}^{(\beta)}\\
\vdots&\vdots&\vdots&\ddots&\vdots&\vdots\\
{\hat{g}}_{M-3}^{(\beta)}&{\hat{g}}_{M-4}^{(\beta)}&{\hat{g}}_{M-5}^{(\beta)}&\cdots&{\hat{g}}_{0}^{(\beta)}&{\hat{g}}_{-1}^{(\beta)}\\
{\hat{g}}_{M-2}^{(\beta)}&{\hat{g}}_{M-3}^{(\beta)}&{\hat{g}}_{M-4}^{(\beta)}&\cdots&{\hat{g}}_{1}^{(\beta)}&{\hat{g}}_{0}^{(\beta)}
\end{bmatrix}.
\end{equation}
Note that $G_\beta$ has a Toeplitz structure \cite{chan2007introduction}. According to Lemma \ref{lemma3.1} in Section \ref{section3}, the matrix $G_\beta$ is also symmetric. Therefore, it can be stored with only $M-1$ entries and the Toeplitz matrix-vector product can be performed within $\mathcal{O}(M\log M)$ operations using FFTs \cite{Jian2018A}.

\section{Solvability, stability and convergence analysis}\label{section3}
In this section, we discuss the solvability, stability and convergence of the difference scheme \eqref{2.11}-\eqref{2.13}. Before analyzing these properties, we first need to give some auxiliary definitions and useful lemmas as follows.

Denote the grid function space on $\Omega_h$ by $$V_h=\{v~|~v=(v_0,v_1,\cdots,v_{M-1},v_M)^T,~v_0=0,~v_M=0\}.$$

For any $v,~w\in V_h$, the discrete inner product and the corresponding discrete $L_2$-norm are defined as follows:
\begin{align*}
(v,w)=h\sum\limits_{i=1}^{M-1}v_iw_i,~~and~~ \|v\|=\sqrt{(v,v)}.
\end{align*}

\begin{lemma} \cite{sun2015finite} Let $\hat{g}_k^{(\beta)}$ be defined as in Lemma \ref{lemma2.2}, for $1<\beta\leq2$, it satisfies
\begin{equation*}
\begin{cases}
\hat{g}_0^{(\beta)}=\frac{\Gamma(\beta+1)}{\Gamma^2(\beta/2+1)}\geq0,\quad \hat{g}_{-k}^{(\beta)}=\hat{g}_k^{(\beta)}\leq0,\quad k=1,2,\cdots,\\
\sum\limits_{k=-\infty}^{\infty}\hat{g}_k^{(\beta)}=0,\quad -\sum\limits_{k=-M+i\atop k\neq0}^{i}\hat{g}_k^{(\beta)}\leq \hat{g}_0^{(\beta)},\quad 1\leq i\leq M-1,\\
 \hat{g}_k^{(\beta)}=\left(1-\frac{\beta+1}{\beta/2+k}\right)\hat{g}_{k-1}^{(\beta)},\quad k\geq1,\\
 \sum\limits_{|k|=l}^{\infty}|\hat{g}_k^{(\beta)}|\geq\frac{c_*^{(\beta)}}{(l+1)^\beta},\quad l\geq1,
\end{cases}
\end{equation*}
where $c_*^{(\beta)}=\frac{2}{\beta}r_\beta$, with
$r_\beta=e^{-2}\frac{(4-\beta)(2-\beta)\beta}{(6+\beta)(4+\beta)(2+\beta)}\cdot
\frac{\Gamma(\beta+1)}{\Gamma^2(\frac{\beta}{2}+1)}\big(3+\frac{\beta}{2}\big)^{\beta+1}.$
\label{lemma3.1}
\end{lemma}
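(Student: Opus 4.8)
The plan is to verify each of the four displayed assertions in turn, working directly from the closed form
$$\hat{g}_k^{(\beta)}=\frac{(-1)^k\Gamma(\beta+1)}{\Gamma(\beta/2-k+1)\Gamma(\beta/2+k+1)}$$
given in Lemma \ref{lemma2.2}. First I would establish the sign pattern and the recurrence. Setting $k=0$ gives $\hat{g}_0^{(\beta)}=\Gamma(\beta+1)/\Gamma^2(\beta/2+1)>0$ since $\beta/2+1>0$. For the symmetry $\hat{g}_{-k}^{(\beta)}=\hat{g}_k^{(\beta)}$, note the defining expression is invariant under $k\mapsto -k$ because $(-1)^k=(-1)^{-k}$ and the two Gamma factors in the denominator merely swap. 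To get the recurrence $\hat{g}_k^{(\beta)}=\bigl(1-\tfrac{\beta+1}{\beta/2+k}\bigr)\hat{g}_{k-1}^{(\beta)}$, I would form the ratio $\hat{g}_k^{(\beta)}/\hat{g}_{k-1}^{(\beta)}$, use $\Gamma(z+1)=z\Gamma(z)$ on $\Gamma(\beta/2+k+1)=(\beta/2+k)\Gamma(\beta/2+k)$ and $\Gamma(\beta/2-k+2)=(\beta/2-k+1)\Gamma(\beta/2-k+1)$, and simplify the resulting rational factor, together with the extra $-1$ from $(-1)^k/(-1)^{k-1}$, to $-(\beta/2-k+1)/(\beta/2+k)=1-(\beta+1)/(\beta/2+k)$. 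For $1<\beta\le 2$ and $k\ge 1$ one checks $0\le 1-(\beta+1)/(\beta/2+k)<1$ when $k\ge 2$, while for $k=1$ the factor is $-( \beta/2)/(\beta/2+1)\le 0$; an induction on $k$ using the recurrence and the established sign of $\hat{g}_1^{(\beta)}\le 0$ then yields $\hat{g}_k^{(\beta)}\le 0$ for all $k\ge 1$.

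Next I would handle the two summation facts. For $\sum_{k=-\infty}^{\infty}\hat{g}_k^{(\beta)}=0$, the clean route is to recognize $\hat{g}_k^{(\beta)}$ as the Fourier coefficients of the symbol $\bigl(2\sin(|\theta|/2)\bigr)^{\beta}$ (or equivalently to use the generating function $\sum_k \hat{g}_k^{(\beta)} z^k=(1+z)^{\beta/2}(1+z^{-1})^{\beta/2}$ type identity underlying the fractional centered difference), and evaluate at $\theta=0$, which gives $0$; alternatively one can cite the standard reference result. The diagonal-dominance inequality $-\sum_{k=-M+i,\,k\ne 0}^{i}\hat{g}_k^{(\beta)}\le \hat{g}_0^{(\beta)}$ then follows immediately: since all off-diagonal $\hat{g}_k^{(\beta)}\le 0$, the finite partial sum $-\sum_{k=-M+i,k\ne0}^{i}\hat{g}_k^{(\beta)}=\sum_{k\ne 0}|\hat{g}_k^{(\beta)}|$ restricted to that index range is bounded above by the full tail sum $\sum_{k\ne 0}|\hat{g}_k^{(\beta)}|$, which equals $\hat{g}_0^{(\beta)}$ by the zero-sum identity. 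This also shows $G_\beta$ is (weakly) diagonally dominant, symmetric, and hence positive semidefinite — the fact invoked in Section \ref{section2}.

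Finally, the lower bound $\sum_{|k|=l}^{\infty}|\hat{g}_k^{(\beta)}|\ge c_*^{(\beta)}/(l+1)^{\beta}$ is the part I expect to be the main obstacle, since it is a genuine two-sided asymptotic estimate rather than an algebraic identity. The approach is to use the asymptotics of the Gamma quotient: as $k\to\infty$, Stirling's formula gives $|\hat{g}_k^{(\beta)}|=\frac{\Gamma(\beta+1)}{|\Gamma(\beta/2-k+1)|\,\Gamma(\beta/2+k+1)}\sim \frac{C_\beta}{k^{\beta+1}}$ for an explicit constant, so that $\sum_{|k|\ge l}|\hat{g}_k^{(\beta)}|\sim \frac{2C_\beta}{\beta}\,l^{-\beta}$. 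Turning this asymptotic statement into the stated \emph{non-asymptotic} inequality valid for all $l\ge 1$ requires explicit, monotone lower bounds on each term $|\hat{g}_k^{(\beta)}|$; I would use the recurrence to show $k^{\beta+1}|\hat{g}_k^{(\beta)}|$ is eventually monotone (or telescope the recurrence to get $|\hat{g}_k^{(\beta)}|=\hat{g}_0^{(\beta)}\prod_{j=1}^{k}\bigl|1-\tfrac{\beta+1}{\beta/2+j}\bigr|$ and bound the product from below using $\log(1-x)\ge -x-x^2$ type estimates), then compare the resulting series with an integral to produce the constant $r_\beta$ with its somewhat baroque explicit form. In practice, since this precise inequality with this precise constant appears in \cite{sun2015finite}, the cleanest write-up is to state that the proof follows directly from the definition in Lemma \ref{lemma2.2} together with the corresponding estimates in \cite{sun2015finite}, and to reproduce only the short algebraic verifications (sign, symmetry, recurrence, zero-sum) in full.
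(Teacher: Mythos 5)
The paper itself offers no proof of Lemma \ref{lemma3.1}: it is imported verbatim from \cite{sun2015finite} as a citation. Your proposal therefore does strictly more than the paper, and the extra work is essentially sound. The verifications of $\hat{g}_0^{(\beta)}>0$, the symmetry $\hat{g}_{-k}^{(\beta)}=\hat{g}_k^{(\beta)}$, the recurrence via $\Gamma(z+1)=z\Gamma(z)$ (the ratio $-(\beta/2-k+1)/(\beta/2+k)=1-(\beta+1)/(\beta/2+k)$ is correct), the induction giving $\hat{g}_k^{(\beta)}\le 0$ for $k\ge 1$ (negative factor at $k=1$, factor in $[0,1)$ for $k\ge 2$ when $1<\beta\le 2$), and the deduction of the diagonal-dominance inequality from the zero-sum identity plus nonpositivity are all correct and are exactly the arguments one finds in the cited source. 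Your decision to defer the quantitative tail bound $\sum_{|k|=l}^{\infty}|\hat{g}_k^{(\beta)}|\ge c_*^{(\beta)}/(l+1)^{\beta}$ to \cite{sun2015finite} is also the honest choice: your Stirling/telescoping sketch identifies the right mechanism ($|\hat{g}_k^{(\beta)}|\sim C_\beta k^{-\beta-1}$, tail $\sim \tfrac{2C_\beta}{\beta}l^{-\beta}$, matching the factor $\tfrac{2}{\beta}$ in $c_*^{(\beta)}$), but turning it into the stated non-asymptotic inequality with that exact constant $r_\beta$ requires the specific elementary term-by-term bounds of the reference, which you do not reproduce; as written that part is an outline, not a proof, and you correctly flag it as such.

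One small correction: the generating function you should invoke for the zero-sum identity is $\bigl(2-z-z^{-1}\bigr)^{\beta/2}=\bigl((1-z)(1-z^{-1})\bigr)^{\beta/2}$, which at $z=e^{\mathbf{i}\theta}$ gives the symbol $|2\sin(\theta/2)|^{\beta}$ you state; the expression $(1+z)^{\beta/2}(1+z^{-1})^{\beta/2}$ you wrote would instead give $(2\cos(\theta/2))^{\beta}$ and coefficients without the alternating sign. Your trigonometric form and the evaluation at $\theta=0$ are right, so the conclusion stands, but the algebraic identity should be fixed. Note also that absolute summability (needed to justify evaluating the Fourier series at $\theta=0$) is exactly the Wiener-class fact proved later in Lemma \ref{lemma5.1}, which itself uses the zero-sum identity; to avoid circularity you should derive summability directly from the decay $|\hat{g}_k^{(\beta)}|=O(k^{-\beta-1})$ rather than from the identity being proved.
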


\begin{lemma} \cite{Gao2017Two} Let $\{\lambda_k^{(\gamma)}\}_{k=0}^\infty~(0\leq\gamma\leq1)$ be defined in \eqref{2.6}, it holds that
\begin{equation*}
\begin{cases}
\lambda_0^{(\gamma)}=1+\frac{\gamma}{2}>0,\\
\lambda_1^{(\gamma)}=-\frac{1}{2}(\gamma+3)\gamma\leq0,\\
\lambda_2^{(\gamma)}=\frac{1}{4}(\gamma^2+3\gamma-2)\gamma=
\begin{cases}
\leq0,\quad \gamma\in[0,\frac{\sqrt{17}-3}{2}],\\
>0,\quad \gamma\in(\frac{\sqrt{17}-3}{2},1],
\end{cases}
\\
\lambda_k^{(\gamma)}=\left[(1+\frac{\gamma}{2})(1-\frac{1+\gamma}{k})
-\frac{\gamma}{2}\right]g_{k-1}^{(\gamma)}\leq0,\quad k=3,4,5,\cdot\cdot\cdot
\end{cases}
\end{equation*}
\label{lemma3.2}
\end{lemma}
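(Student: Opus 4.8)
The plan is to verify the four assertions directly from the closed-form recurrences in \eqref{2.6} together with the elementary properties of the Grünwald weights $g_k^{(\gamma)}$. First I would record the basic facts about $\{g_k^{(\gamma)}\}$: since $g_0^{(\gamma)}=1$ and $g_k^{(\gamma)}=\bigl(1-\frac{\gamma+1}{k}\bigr)g_{k-1}^{(\gamma)}$, for $0\le\gamma\le 1$ the factor $1-\frac{\gamma+1}{k}$ is negative for $k=1$, equals $-\gamma/2$ wait — more carefully, $1-\frac{\gamma+1}{1}=-\gamma\le 0$, so $g_1^{(\gamma)}=-\gamma\le 0$; for $k\ge 2$ the factor $1-\frac{\gamma+1}{k}$ lies in $[0,1)$ when $\gamma\le 1$, hence $g_k^{(\gamma)}\le 0$ for all $k\ge 1$, with $|g_k^{(\gamma)}|$ nonincreasing in $k$ for $k\ge 1$. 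These sign facts are exactly what is needed downstream.

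Next I would treat the first three items by plain substitution. For $\lambda_0^{(\gamma)}=(1+\frac{\gamma}{2})g_0^{(\gamma)}=1+\frac{\gamma}{2}$, which is clearly positive on $[0,1]$. For $\lambda_1^{(\gamma)}=(1+\frac{\gamma}{2})g_1^{(\gamma)}-\frac{\gamma}{2}g_0^{(\gamma)}=(1+\frac{\gamma}{2})(-\gamma)-\frac{\gamma}{2}=-\frac{\gamma}{2}(\gamma+3)$, wait let me recompute: $=-\gamma-\frac{\gamma^2}{2}-\frac{\gamma}{2}=-\frac{\gamma^2}{2}-\frac{3\gamma}{2}=-\frac12(\gamma+3)\gamma$, which is $\le 0$ on $[0,1]$ since each factor has the right sign. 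For $\lambda_2^{(\gamma)}$ I would plug in $g_2^{(\gamma)}=(1-\frac{\gamma+1}{2})g_1^{(\gamma)}=\frac{1-\gamma}{2}\cdot(-\gamma)=-\frac{\gamma(1-\gamma)}{2}$ and $g_1^{(\gamma)}=-\gamma$, obtaining $\lambda_2^{(\gamma)}=(1+\frac{\gamma}{2})\bigl(-\frac{\gamma(1-\gamma)}{2}\bigr)-\frac{\gamma}{2}(-\gamma)=\frac{\gamma}{4}(\gamma^2+3\gamma-2)$ after expansion; the sign then follows from the sign of the quadratic $\gamma^2+3\gamma-2$, whose positive root is $\frac{-3+\sqrt{17}}{2}$, giving exactly the stated case split.

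Finally, for $k\ge 3$ I would use the rearrangement of \eqref{2.6},
$$\lambda_k^{(\gamma)}=\Bigl[(1+\tfrac{\gamma}{2})\bigl(1-\tfrac{1+\gamma}{k}\bigr)-\tfrac{\gamma}{2}\Bigr]g_{k-1}^{(\gamma)},$$
obtained by writing $g_k^{(\gamma)}=(1-\frac{1+\gamma}{k})g_{k-1}^{(\gamma)}$. The bracketed scalar is $1-\frac{(1+\gamma)(2+\gamma)}{2k}$; since $g_{k-1}^{(\gamma)}\le 0$ for $k\ge 3$, it suffices to show the bracket is $\ge 0$, i.e. $2k\ge(1+\gamma)(2+\gamma)$. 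The right side is increasing in $\gamma$ on $[0,1]$ with maximum $2\cdot 3=6$ at $\gamma=1$, so the inequality holds for all $k\ge 3$, giving $\lambda_k^{(\gamma)}\le 0$. The only mildly delicate point is organizing the $g_k^{(\gamma)}$ sign/monotonicity facts cleanly at the start so that the case $k=3$ (where $g_{k-1}^{(\gamma)}=g_2^{(\gamma)}$ could vanish at $\gamma\in\{0,1\}$) is covered with non-strict inequalities; otherwise the argument is entirely a sequence of elementary substitutions and one quadratic sign analysis, and I expect no real obstacle.
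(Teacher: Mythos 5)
Your verification is correct: the paper itself gives no proof of this lemma (it is quoted directly from the cited reference of Gao and Sun), and your argument --- substituting the recurrence for $g_k^{(\gamma)}$ into \eqref{2.6}, computing $\lambda_0^{(\gamma)},\lambda_1^{(\gamma)},\lambda_2^{(\gamma)}$ explicitly, and for $k\geq 3$ showing the bracket equals $1-\frac{(1+\gamma)(2+\gamma)}{2k}\geq 0$ while $g_{k-1}^{(\gamma)}\leq 0$ --- is exactly the standard elementary derivation one finds in that reference. All the computations check out, including the sign analysis of $\gamma^2+3\gamma-2$ and the boundary cases $\gamma\in\{0,1\}$, $k=3$ with non-strict inequalities.
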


\begin{lemma} \cite{Gao2017Two} Let $\{\lambda_k^{(\gamma)}\}_{k=0}^\infty$ be defined in \eqref{2.6}. For any $\gamma\in[0,1]$, positive integer $m$ and real vector
$(v_1,v_2,\cdot\cdot\cdot,v_m)^T\in \mathbb{R}^m$, we have
\begin{equation*}
\sum\limits_{n=1}^{m}\left(\sum\limits_{k=0}^{n-1}\lambda_k^{(\gamma)}v_{n-k}\right)v_n\geq0.
\end{equation*}
\label{lemma3.3}
\end{lemma}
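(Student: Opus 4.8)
The plan is to reduce the quadratic‑form inequality to the non‑negativity of the real part of a single generating function, via the classical Fourier representation of Toeplitz forms. First I would write down the generating function of the weights: since $\sum_{k\ge 0}g_k^{(\gamma)}z^k=(1-z)^{\gamma}$ for $|z|\le 1$, the recurrence \eqref{2.6} gives at once
\[
W_\gamma(z):=\sum_{k=0}^{\infty}\lambda_k^{(\gamma)}z^{k}=\Big[\big(1+\tfrac{\gamma}{2}\big)-\tfrac{\gamma}{2}z\Big](1-z)^{\gamma},
\]
which for $\gamma\in(0,1]$ is continuous on $|z|\le 1$ with an absolutely convergent Taylor series (the case $\gamma=0$ being trivial). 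Extending the given vector by $v_n:=0$ for $n\le 0$ and $n>m$ and setting $\hat v(\theta):=\sum_{n=1}^{m}v_n e^{\mathrm i n\theta}$, I would expand $\overline{W_\gamma(e^{\mathrm i\theta})}\,|\hat v(\theta)|^{2}$ and integrate term by term over $[0,2\pi]$; only the terms with index difference $n-j=k\ge 0$ survive, so that
\[
\sum_{n=1}^{m}\Big(\sum_{k=0}^{n-1}\lambda_k^{(\gamma)}v_{n-k}\Big)v_n=\frac{1}{2\pi}\int_{0}^{2\pi}\overline{W_\gamma(e^{\mathrm i\theta})}\,|\hat v(\theta)|^{2}\,d\theta .
\]
Because the left‑hand side is real and the $\lambda_k^{(\gamma)}$ are real, the right‑hand side equals $\frac{1}{2\pi}\int_{0}^{2\pi}\operatorname{Re}\!\big(W_\gamma(e^{\mathrm i\theta})\big)\,|\hat v(\theta)|^{2}\,d\theta$, so it remains only to prove $\operatorname{Re}\!\big(W_\gamma(e^{\mathrm i\theta})\big)\ge 0$ for every $\theta$.

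The hard part will be exactly this pointwise estimate on the symbol. For $\theta\in(0,2\pi)$ I would put $\phi:=\theta/2-\pi/2\in(-\pi/2,\pi/2)$, so that $1-e^{\mathrm i\theta}=2\sin(\theta/2)\,e^{\mathrm i\phi}$ with $2\sin(\theta/2)=2\cos\phi>0$; consequently $(1-e^{\mathrm i\theta})^{\gamma}=(2\cos\phi)^{\gamma}e^{\mathrm i\gamma\phi}$ and, crucially, $(1+\tfrac{\gamma}{2})-\tfrac{\gamma}{2}e^{\mathrm i\theta}=1+\tfrac{\gamma}{2}(1-e^{\mathrm i\theta})=1+\gamma\cos\phi\,e^{\mathrm i\phi}$. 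Multiplying these and taking the real part gives
\[
\operatorname{Re}\!\big(W_\gamma(e^{\mathrm i\theta})\big)=(2\cos\phi)^{\gamma}\Big[\cos(\gamma\phi)+\gamma\cos\phi\,\cos\big((\gamma+1)\phi\big)\Big].
\]
Since $\cos\phi\ge 0$ and $\cos((\gamma+1)\phi)\ge -1$, the bracket is bounded below by $q(\phi):=\cos(\gamma\phi)-\gamma\cos\phi$; and $q(0)=1-\gamma\ge 0$, while $q'(\phi)=\gamma\big(\sin\phi-\sin(\gamma\phi)\big)\ge 0$ on $[0,\pi/2]$ because $\sin$ is increasing there and $\gamma\phi\le\phi$. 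Hence $q\ge 0$ on $[0,\pi/2]$, and by evenness on $(-\pi/2,0)$ as well, so the bracket and therefore $\operatorname{Re}(W_\gamma(e^{\mathrm i\theta}))$ are non‑negative on $(0,2\pi)$; at $\theta=0$ the symbol vanishes. Substituting this into the integral representation closes the proof.

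The genuinely delicate point is choosing the polar form of $1-e^{\mathrm i\theta}$ so that the linear factor collapses to $1+\gamma\cos\phi\,e^{\mathrm i\phi}$, after which the lower bound by the monotone function $q$ drops out; a lazier attempt to bound $W_\gamma$ over the whole disk by adding the arguments of the two factors fails as $\gamma\to 1$, so one really must use the boundary values. Everything else---the identity for $W_\gamma$ coming from \eqref{2.6}, the Fourier bookkeeping, and the monotonicity of $q$---is routine. (Alternatively, this is precisely the result established in \cite{Gao2017Two}, which could simply be quoted.)
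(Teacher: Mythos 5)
Your proof is correct: the generating function identity $\sum_k\lambda_k^{(\gamma)}z^k=\bigl[(1+\tfrac{\gamma}{2})-\tfrac{\gamma}{2}z\bigr](1-z)^{\gamma}$, the reduction of the lower-triangular Toeplitz form to $\frac{1}{2\pi}\int_0^{2\pi}\operatorname{Re}\bigl(W_\gamma(e^{\mathrm{i}\theta})\bigr)\lvert\hat v(\theta)\rvert^2\,d\theta$, and the pointwise bound via $q(\phi)=\cos(\gamma\phi)-\gamma\cos\phi\ge 1-\gamma\ge 0$ all check out. The paper itself gives no proof of this lemma (it is quoted from \cite{Gao2017Two}), and your argument is essentially the standard generating-function/Grenander--Szeg\H{o} argument used in that reference, so there is nothing further to compare.
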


According to Lemma \ref{lemma3.1}, we get the following results.

\begin{lemma}
Let $\delta_x^{\beta}=h^{-\beta}G_\beta~(1<\beta\leq2)$. For any grid function $v^n\in V_h~(0\leq n\leq N)$, we have
\begin{equation*}
\left(\delta_x^{\beta}v^{n-\frac{1}{2}},\delta_tv^{n-\frac{1}{2}}\right)
=\frac{1}{2\tau}\left(\parallel Bv^n\parallel^2-\parallel Bv^{n-1}\parallel^2\right),\quad 1\leq n\leq N,
\end{equation*}
where $B$ is a SPD matrix and satisfies $\delta_x^{\beta}=B^2$.
\label{lemma3.4}
\end{lemma}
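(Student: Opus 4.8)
The plan is to prove the identity in three stages: first show that $\delta_x^{\beta}=h^{-\beta}G_\beta$ is symmetric positive definite (SPD), then extract its SPD square root $B$, and finally perform a short difference-of-squares computation. Since $h>0$, the SPD property of $\delta_x^{\beta}$ is equivalent to that of $G_\beta$. Symmetry is immediate from Lemma \ref{lemma3.1}, which gives $\hat g_{-k}^{(\beta)}=\hat g_k^{(\beta)}$, so the Toeplitz matrix $G_\beta$ in \eqref{2.16} coincides with its transpose. For positive definiteness I would combine the remaining items of Lemma \ref{lemma3.1}: $\hat g_0^{(\beta)}>0$, $\hat g_k^{(\beta)}\le 0$ for $k\ge 1$, the bound $-\sum_{k=-M+i,\,k\neq 0}^{i}\hat g_k^{(\beta)}\le\hat g_0^{(\beta)}$ for every $1\le i\le M-1$, and $\sum_{k=-\infty}^{\infty}\hat g_k^{(\beta)}=0$. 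Together these say that $G_\beta$ is weakly diagonally dominant with a positive diagonal; moreover, in the first row the off-diagonal absolute row sum is $-\sum_{k=1}^{M-2}\hat g_k^{(\beta)}\le-\sum_{k=1}^{\infty}\hat g_k^{(\beta)}=\tfrac12\hat g_0^{(\beta)}<\hat g_0^{(\beta)}$, and likewise in the last row by symmetry, so the dominance is strict there. Since $G_\beta$ is irreducible (every off-diagonal band is nonzero), an irreducibly diagonally dominant symmetric matrix with positive diagonal entries is positive definite; hence $\delta_x^{\beta}$ is SPD. (Equivalently, one may invoke that $G_\beta$ is a Hermitian Toeplitz matrix with a nonnegative, not-identically-zero generating symbol, which by the classical theory of Toeplitz forms is positive definite.)

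With $\delta_x^{\beta}$ SPD, I would take its orthogonal diagonalization $\delta_x^{\beta}=Q\Lambda Q^{T}$, $\Lambda=\mathrm{diag}(\lambda_1,\dots,\lambda_{M-1})$ with $\lambda_i>0$, and set $B:=Q\Lambda^{1/2}Q^{T}$, so that $B$ is SPD, $B=B^{T}$, and $B^{2}=\delta_x^{\beta}$. For the identity itself, recall $(a,b)=h\,a^{T}b$, $v^{n-\frac12}=\tfrac12(v^{n}+v^{n-1})$ and $\delta_t v^{n-\frac12}=\tfrac1\tau(v^{n}-v^{n-1})$; then, using $\delta_x^{\beta}=B^{2}=B^{T}B$ and the symmetry of $B$,
\[
\left(\delta_x^{\beta}v^{n-\frac12},\delta_tv^{n-\frac12}\right)
=\frac{h}{2\tau}\bigl(v^{n}+v^{n-1}\bigr)^{T}B^{T}B\bigl(v^{n}-v^{n-1}\bigr)
=\frac{h}{2\tau}\bigl(Bv^{n}+Bv^{n-1}\bigr)^{T}\bigl(Bv^{n}-Bv^{n-1}\bigr).
\]
Expanding the last expression as $\tfrac{h}{2\tau}\bigl(\|Bv^{n}\|_2^{2}-\|Bv^{n-1}\|_2^{2}\bigr)$ in the Euclidean norm and reabsorbing the factor $h$ into the discrete norm via $\|w\|^{2}=h\|w\|_2^{2}$ yields precisely $\tfrac1{2\tau}\bigl(\|Bv^{n}\|^{2}-\|Bv^{n-1}\|^{2}\bigr)$, which is the assertion.

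The main obstacle is the first stage. Lemma \ref{lemma3.1} delivers only \emph{weak} diagonal dominance in the interior rows, which by itself gives only positive semidefiniteness of $\delta_x^{\beta}$ (hence only a PSD, not an SPD, square root); the upgrade to strict positive definiteness rests on the strict dominance of the two boundary rows together with irreducibility. Once SPD-ness is in hand, the spectral construction of $B$ and the algebraic manipulation are routine.
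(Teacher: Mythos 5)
Your proof is correct and follows essentially the same route as the paper's: deduce from Lemma \ref{lemma3.1} that $G_\beta$ (hence $\delta_x^{\beta}$) is SPD, take its SPD square root $B$, and expand the inner product as a difference of squares. The only difference is that the paper dismisses the SPD property as ``easy to see'' from Lemma \ref{lemma3.1}, whereas you supply the diagonal-dominance and irreducibility details it omits (in fact every row is strictly dominant, since the discarded tail $\sum_{|k|\geq M-1}|\hat{g}_k^{(\beta)}|$ is strictly positive by the last item of Lemma \ref{lemma3.1}).
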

\begin{proof}
According to Lemma \ref{lemma3.1}, it is easy to see that the $G_\beta$ is a SPD matrix, so $\delta_x^{\beta}$ is also SPD. Thus there is a SPD matrix $B$ such that $\delta_x^{\beta}=B^2$, and we have

\begin{align*}
\left(\delta_x^{\beta}v^{n-\frac{1}{2}},\delta_tv^{n-\frac{1}{2}}\right)
&=\left(B^2v^{n-\frac{1}{2}},\delta_tv^{n-\frac{1}{2}}\right)\\
&=\frac{1}{2\tau}\left(Bv^n+Bv^{n-1},Bv^n-Bv^{n-1}\right)\\
&=\frac{1}{2\tau}\cdot h\sum\limits_{i=1}^{M-1}\left[(Bv_i^n)^2-(Bv_i^{n-1})^2\right]\\
&=\frac{1}{2\tau}\left(\parallel Bv^n\parallel^2-\parallel Bv^{n-1}\parallel^2\right),
\end{align*}
which completes the proof.
\end{proof}

\begin{lemma}
 Let matrix $B$ as defined in Lemma \ref{lemma3.4}, For any grid function $v^n\in V_h~(0\leq n\leq N)$, we have
\begin{equation*}
\frac{c_*^{(\beta)}}{L^\beta}\parallel v^n\parallel^2\leq\parallel Bv^n\parallel^2
\leq \mid v^n\mid_{H^{\beta/2}}^2,
\end{equation*}
where $\mid v^n\mid_{H^{\beta/2}}^2$ is the fractional Sobolev norm, whose definition can be referred to \cite{Wang2015maximum}.
\label{lemma3.5}
\end{lemma}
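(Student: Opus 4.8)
The plan is to establish the two inequalities separately, in both cases passing through the spectral/Fourier representation of the operator $\delta_x^\beta = h^{-\beta}G_\beta = B^2$. Since $B$ is SPD with $B^2 = \delta_x^\beta$, we have $\|Bv^n\|^2 = (B^2 v^n, v^n) = (\delta_x^\beta v^n, v^n)$, so the whole statement reduces to sandwiching the quadratic form $(\delta_x^\beta v^n, v^n) = h^{1-\beta}\sum_i \big(G_\beta v^n\big)_i v_i^n$ between $\frac{c_*^{(\beta)}}{L^\beta}\|v^n\|^2$ and $|v^n|_{H^{\beta/2}}^2$. I would first rewrite this bilinear form explicitly using the symmetry $\hat g_{-k}^{(\beta)} = \hat g_k^{(\beta)}$ from Lemma \ref{lemma3.1}, extending $v^n$ by zero outside $\{1,\dots,M-1\}$ so that the sum becomes $h^{1-\beta}\sum_{i}\sum_{k}\hat g_k^{(\beta)} v_{i-k}^n v_i^n$ over all integers, which is the natural form for applying known estimates.

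For the upper bound, the standard route is to use the generating function identity for the fractional centered difference coefficients: $\sum_{k=-\infty}^\infty \hat g_k^{(\beta)} e^{ik\theta} = \big(2\sin(\theta/2)\big)^\beta$ (this is exactly why the scheme is consistent with $\partial^\beta/\partial|x|^\beta$). Applying Parseval's relation to the zero-extended vector, $(\delta_x^\beta v^n, v^n)$ becomes $\frac{h^{1-\beta}}{2\pi}\int_{-\pi}^\pi \big(2\sin(\theta/2)\big)^\beta |\hat v^n(\theta)|^2\, d\theta$ with $\hat v^n(\theta) = \sum_j v_j^n e^{ij\theta}$, and using $2\sin(\theta/2) \le |\theta|$ together with a change of variables $\theta = h\xi$ (matching the grid spacing) identifies the right-hand side with the discrete fractional Sobolev seminorm $|v^n|_{H^{\beta/2}}^2$ as defined via the Fourier transform in \cite{Wang2015maximum}. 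For the lower bound I would bound $(\delta_x^\beta v^n, v^n)$ below by $\frac{c_*^{(\beta)}}{L^\beta}\|v^n\|^2$ by splitting the quadratic form into diagonal and off-diagonal parts and estimating $\sum_{|k|\ge l} |\hat g_k^{(\beta)}| \ge \frac{c_*^{(\beta)}}{(l+1)^\beta}$ from Lemma \ref{lemma3.1}: writing $(\delta_x^\beta v^n, v^n) = -h^{1-\beta}\sum_{k\ne 0} |\hat g_k^{(\beta)}| \sum_i v_{i-k}^n v_i^n + h^{1-\beta}\hat g_0^{(\beta)}\sum_i (v_i^n)^2$ and using $|\sum_i v_{i-k}^n v_i^n| \le \|v^n\|_{\ell^2}^2$, together with $\hat g_0^{(\beta)} = \sum_{k\ne 0}|\hat g_k^{(\beta)}|$, one gets a telescoping/summation-by-parts estimate; since $v^n$ is supported on at most $M-1$ consecutive indices, only coefficients with $|k|$ up to $M-1$ can couple nonzero entries, and the tail bound with $l = M$, combined with $h = L/M$, produces the constant $c_*^{(\beta)}/L^\beta$.

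The main obstacle I expect is the lower bound — more precisely, making the "support localization" argument quantitatively sharp enough to recover exactly the constant $c_*^{(\beta)}/L^\beta$ rather than a weaker constant. The inequality $\sum_{|k|\ge l}|\hat g_k^{(\beta)}| \ge c_*^{(\beta)}/(l+1)^\beta$ gives a lower bound on the \emph{tail} mass of the coefficients, but to convert that into a coercivity estimate for the quadratic form one needs a Poincaré-type argument exploiting the zero boundary values $v_0^n = v_M^n = 0$; the cleanest way is probably to diagonalize $G_\beta$ and bound its smallest eigenvalue from below using the tail sum, or to invoke a discrete fractional Poincaré inequality on the interval $[0,L]$. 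I would follow whichever of these the paper's cited reference \cite{sun2015finite} or \cite{Wang2015maximum} sets up, since the constant $c_*^{(\beta)}$ was manifestly engineered in Lemma \ref{lemma3.1} precisely to make this step go through. The upper bound, by contrast, is essentially a direct consequence of Parseval plus the elementary inequality $2\sin(\theta/2) \le |\theta|$ and should not present difficulties.
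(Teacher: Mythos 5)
Your proposal is essentially the paper's argument: for the lower bound the paper does exactly what you list as your fallback option --- it diagonalizes $\delta_x^{\beta}$, bounds the smallest eigenvalue $\lambda_{M-1}$ from below by the minimum row sum via a minimum-eigenvalue theorem for strictly diagonally dominant $M$-matrices (equivalent to your Cauchy--Schwarz estimate on the quadratic form), converts that row sum into the tail $\sum_{|k|\geq M-1}|\hat{g}_k^{(\beta)}|$ using $\sum_k \hat{g}_k^{(\beta)}=0$, and applies the tail estimate of Lemma \ref{lemma3.1} with $l=M-1$ so that $(l+1)^{\beta}=M^{\beta}$ cancels against $h^{-\beta}=(M/L)^{\beta}$ to give exactly $c_*^{(\beta)}/L^{\beta}$ (note it is $l=M-1$, not $l=M$ as you wrote --- the couplings satisfy $|k|\leq M-2$, and $l=M$ would yield the slightly weaker constant $c_*^{(\beta)}/\bigl(h(M+1)\bigr)^{\beta}$). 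The upper bound is simply cited from Lemma 3.3 of \cite{Wang2015maximum} rather than re-derived via the Parseval argument you sketch, but that is the standard proof of the cited fact.
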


\begin{proof}
According to \cite[Lemma 3.3]{Wang2015maximum}, we can easily get that $\parallel Bv^n\parallel^2
\leq \mid v^n\mid_{H^{\beta/2}}^2$.

Then we prove that $\parallel Bv^n\parallel^2>\frac{c_*^{(\beta)}}{L^\beta}\parallel v^n\parallel^2$. Consider the 2-norm of given vector,
\begin{align*}
\parallel Bv^n\parallel^2&=(Bv^n,Bv^n)\\
&=h(v^n)^T(\delta_x^{\beta})v^n\\
&\stackrel{v^n=Py^n}{=\joinrel=\joinrel=\joinrel=}h(y^n)^T(P^T\delta_x^{\beta}P)y^n\\
&=h(\lambda_1y_1^n+\lambda_2y_2^n+\cdot\cdot\cdot+\lambda_{M-1}y_{M-1}^n),
\end{align*}
where $v^n=Py^n$ is a orthogonal transformation, and $\lambda_1\geq\lambda_2\geq\cdot\cdot\cdot\geq\lambda_{M-1}$
are the eigenvalues of the matrix related to $\delta_x^{\beta}$.

Thus, we get
$$\parallel Bv^n\parallel^2\geq\lambda_{M-1}\parallel v^n\parallel^2.$$

So we just need to prove that $\lambda_{M-1}\geq\frac{c_*^{(\beta)}}{L^\beta}$.
Using Lemma \ref{lemma3.1}, we can easily get that the $\delta_x^{\beta}$ is
a strictly diagonally dominant $M$-matrix. According to \cite[Theorem 1.1]{Tian2013inequalities}, and noticing Lemma \ref{lemma3.1}, we have
\begin{align*}
\lambda_{M-1}&\geq h^{-\beta}\min\limits_{i\in\{1,2\cdot\cdot\cdot,M-1\}}\sum\limits_{k=i-M+1}^{i-1}\hat{g}_k^{(\beta)}\\
&\geq h^{-\beta}\left(\sum\limits_{|k|=0}^{M-2}\hat{g}_k^{(\beta)}\right)
= h^{-\beta}\left(\sum\limits_{|k|=M-1}^{\infty}|\hat{g}_k^{(\beta)}|\right)\\
&\geq h^{-\beta}\left(\frac{c_*^{(\beta)}}{M^\beta}\right)
= \frac{c_*^{(\beta)}}{L^\beta}.
\end{align*}

The proof is completed.
\end{proof}

This lemma is the key to prove the stability and convergence of the difference scheme \eqref{2.11}-\eqref{2.13}.

\subsection{Solvability}\label{section3.1}
\begin{theorem}
The difference scheme \eqref{2.11}-\eqref{2.13} is uniquely solvable.
\label{th3.1}
\end{theorem}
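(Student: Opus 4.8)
The plan is to reduce unique solvability to the invertibility of the single constant coefficient matrix $A=\mu_0 I+K\nu_\beta G_\beta$ appearing in the equivalent matrix form \eqref{2.14}, and then to induct on the time level $n$. Since $u^0$ is prescribed by \eqref{2.12} and, for every $n\ge 1$, the right-hand side $b^{n-1}$ in \eqref{2.14} involves only the already computed vectors $u^0,\dots,u^{n-1}$ together with the given source values, it suffices to show that $A$ is nonsingular; then $u^n$ is uniquely obtained at each step $n=1,2,\dots,N$.

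To prove that $A$ is invertible I would show it is symmetric positive definite. First, $\mu_0>0$: by Lemma \ref{lemma3.2} one has $\lambda_0^{(\gamma_l)}=1+\gamma_l/2>0$ for every $l$, while $\Delta\alpha>0$, $c_l>0$, $\tau^{-\gamma_l}>0$ and $\omega(\alpha_l)\ge 0$ with $\omega\not\equiv 0$, so that $\mu_0=\Delta\alpha\sum_{l=0}^{2J}c_l\omega(\alpha_l)\tau^{-\gamma_l}\lambda_0^{(\gamma_l)}>0$ and hence $\mu_0 I$ is SPD. Second, as already observed in the proof of Lemma \ref{lemma3.4} (using the sign pattern and diagonal dominance of the entries $\hat g_k^{(\beta)}$ from Lemma \ref{lemma3.1}), the Toeplitz matrix $G_\beta$ is SPD, and since $K>0$ and $\nu_\beta=\tfrac{\tau}{2h^\beta}>0$, the matrix $K\nu_\beta G_\beta$ is SPD as well. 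Therefore $A$, being the sum of two SPD matrices, is SPD and in particular nonsingular.

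Combining these two facts, \eqref{2.14} with $n=1$ determines $u^1$ uniquely from $u^0$, and inductively $u^n$ is uniquely determined for all $1\le n\le N$, which establishes the claim. I do not expect any genuine obstacle here; the only points deserving a little care are the strict positivity $\mu_0>0$ (which relies on $\lambda_0^{(\gamma_l)}>0$ together with $\omega\not\equiv 0$) and the positive definiteness of $G_\beta$ inherited from Lemma \ref{lemma3.1}. Should one prefer to avoid invoking definiteness of $A$ directly, an equivalent route is to test $Au=0$ against $u$ in the discrete inner product and conclude $u=0$ from $\mu_0>0$ and the positivity statements in Lemmas \ref{lemma3.1} and \ref{lemma3.5}.
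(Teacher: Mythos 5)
Your proposal is correct and follows essentially the same route as the paper: establish $\mu_0>0$ from Lemma \ref{lemma3.2}, deduce that $G_\beta$ (hence $K\nu_\beta G_\beta$) is SPD from Lemma \ref{lemma3.1}, conclude that $A=\mu_0 I+K\nu_\beta G_\beta$ is SPD and therefore nonsingular, so each time level is uniquely solvable. The only difference is that you spell out the induction over time levels and the positivity of $\mu_0$ more explicitly than the paper does, which is a welcome clarification rather than a deviation.
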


\begin{proof}
By Lemma \ref{lemma3.2}, we have $\mu_0>0$. In addition, using Lemma \ref{lemma3.1}, noticing that $K>0$ and $\nu_\beta>0$, we can easily prove that the coefficient matrix $A$ defined in \eqref{2.15} is SPD, so it is nonsingular. Therefore the difference scheme \eqref{2.11}-\eqref{2.13} is uniquely solvable.
\end{proof}

\subsection{Stability}\label{section3.2}

In this subsection, we work towards proving the unconditional stability of the proposed difference scheme \eqref{2.11}-\eqref{2.13}.

\begin{theorem}
 Let $\{u_i^n~|~0\leq i\leq M,~0\leq n\leq N\}$ be the solution of the following difference system
\begin{align}
\nonumber
&\Delta\alpha\sum\limits_{l=0}^{2J}c_l\omega(\alpha_l)\frac{1}{\tau^{\gamma_l}}\sum\limits_{k=0}^{n-1}
\lambda_k^{(\gamma_l)}\delta_tu_i^{n-k-\frac{1}{2}}
=-Kh^{-\beta}\sum\limits_{k=i-M}^i\hat{g}_k^{(\beta)}u_{i-k}^{n-\frac{1}{2}}+G_i^{n},\label{3.1}&\\
&\quad\qquad\quad\qquad\quad\qquad\quad\qquad\quad\qquad\quad\qquad 1\leq i\leq M-1,~1\leq n\leq N,&\\
&u_i^0=\phi_i,\quad 1\leq i\leq M-1,\label{3.2}&\\
&u_0^n=0, \quad u_M^n=0,\quad 0\leq n\leq N.\label{3.3}&
\end{align}
Then it holds that
\begin{align*}
\parallel u^m\parallel^2\leq&
exp(T)\Bigg[\frac{3L^\beta}{c_{*}^{(\beta)}}\mid u^0\mid_{H^{\beta/2}}^2
+\left(\frac{2L^\beta}{Kc_{*}^{(\beta)}}\right)^2\left(\parallel G^1\parallel^2+\max\limits_{1\leq n\leq m}\parallel G^n\parallel^2\right) \\ &\quad\quad+\left(\frac{2L^\beta}{Kc_{*}^{(\beta)}}\right)^2\tau\sum\limits_{n=1}^{m-1}\parallel\delta_tG^{n+\frac{1}{2}}\parallel^2\Bigg],\quad 1\leq m\leq N,
\end{align*}
where
$\parallel G^n\parallel^2=h\sum\limits_{i=1}^{M-1}\left(G_i^{n}\right)^2$ and $
\parallel \delta_tG^{n+\frac{1}{2}}\parallel^2=h\sum\limits_{i=1}^{M-1}\left(\delta_tG_i^{n+\frac{1}{2}}\right)^2.$
\label{th3.2}
\end{theorem}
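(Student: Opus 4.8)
The plan is to carry out a discrete energy argument. I would take the inner product of the difference equation \eqref{3.1} with $\delta_t u^{n-\frac{1}{2}} = \frac{1}{\tau}(u^n - u^{n-1})$ and sum over $n$ from $1$ to some index $m$. The left-hand side produces the double sum $\Delta\alpha\sum_{l}c_l\omega(\alpha_l)\tau^{-\gamma_l}\sum_{n=1}^{m}\big(\sum_{k=0}^{n-1}\lambda_k^{(\gamma_l)}\delta_t u^{n-k-\frac12},\delta_t u^{n-\frac12}\big)$; by Lemma \ref{lemma3.3} (applied componentwise, with the factor $h$ from the discrete inner product) each term is nonnegative, and since $c_l,\omega(\alpha_l),\Delta\alpha,\tau^{-\gamma_l}\geq 0$, the entire left side is $\geq 0$. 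On the right-hand side, the diffusion term $-Kh^{-\beta}G_\beta u^{n-\frac12}=-K\delta_x^\beta u^{n-\frac12}$ paired with $\delta_t u^{n-\frac12}$ telescopes via Lemma \ref{lemma3.4}: $\sum_{n=1}^m(-K\delta_x^\beta u^{n-\frac12},\delta_t u^{n-\frac12}) = -\frac{K}{2\tau}(\|Bu^m\|^2-\|Bu^0\|^2)$. Rearranging gives
\begin{equation*}
\frac{K}{2\tau}\|Bu^m\|^2 \leq \frac{K}{2\tau}\|Bu^0\|^2 + \sum_{n=1}^m (G^n,\delta_t u^{n-\frac12}).
\end{equation*}

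The next step is to handle the source term $\sum_{n=1}^m(G^n,\delta_t u^{n-\frac12})$. Since $\delta_t u^{n-\frac12}=\frac1\tau(u^n-u^{n-1})$, I would apply summation by parts (Abel summation) in $n$ to move the difference quotient off $u$ and onto $G$: $\tau\sum_{n=1}^m(G^n,\delta_t u^{n-\frac12}) = (G^m,u^m) - (G^1,u^0) - \tau\sum_{n=1}^{m-1}(\delta_t G^{n+\frac12},u^n)$. Each of these terms is then bounded by Cauchy--Schwarz and Young's inequality, producing $\|G^m\|\|u^m\|$, $\|G^1\|\|u^0\|$, and $\sum\|\delta_t G^{n+\frac12}\|\|u^n\|$ contributions. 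To convert the $\|Bu^m\|^2$ on the left into the desired $\|u^m\|^2$, I invoke the crucial lower bound from Lemma \ref{lemma3.5}, namely $\|Bu^m\|^2\geq \frac{c_*^{(\beta)}}{L^\beta}\|u^m\|^2$; and to express $\|Bu^0\|^2$ in terms of the given data $|u^0|_{H^{\beta/2}}^2$ I use the upper bound $\|Bu^0\|^2\leq|u^0|_{H^{\beta/2}}^2$ from the same lemma.

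After these substitutions one arrives at an inequality of the shape $\|u^m\|^2 \leq C_0 + C_1\|u^m\|^2 \cdot(\text{small})+ C_2\tau\sum_{n=1}^{m-1}\|u^n\|^2 + (\text{data terms})$, where the norms $\|u^m\|$ appearing linearly on the right are absorbed into the left side by Young's inequality (choosing the split so that, say, a $\tfrac12\|u^m\|^2$ can be moved over), and the constants involve $\frac{L^\beta}{Kc_*^{(\beta)}}$. This leaves a discrete Gronwall-type inequality $\|u^m\|^2 \leq \widetilde{C} + C_2\tau\sum_{n=1}^{m-1}\|u^n\|^2$ with $\widetilde{C}$ collecting $\frac{L^\beta}{c_*^{(\beta)}}|u^0|_{H^{\beta/2}}^2$ and the $\big(\frac{L^\beta}{Kc_*^{(\beta)}}\big)^2$-weighted norms of $G^1$, $\max_n\|G^n\|$ and $\tau\sum\|\delta_t G^{n+\frac12}\|^2$. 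Applying the discrete Gronwall lemma yields the factor $\exp(T)$ (from $\prod(1+C_2\tau)\leq e^{C_2 T}$, with $C_2$ tracked to be $1$, or absorbed) and produces exactly the stated bound; matching the precise numerical constants $3$, $\big(\tfrac{2L^\beta}{Kc_*^{(\beta)}}\big)^2$ is then bookkeeping in the Young's-inequality splits.

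The main obstacle I anticipate is organizing the source-term estimate so that the final constants come out as clean as claimed: the summation-by-parts step, the Cauchy--Schwarz/Young splits, and the absorption of $\|u^m\|^2$ must all be coordinated (in particular, choosing the Young parameters so that the coefficient of the absorbed $\|u^m\|^2$ leaves a factor that combines with $\frac{c_*^{(\beta)}}{L^\beta}$ to give the $\frac{2L^\beta}{Kc_*^{(\beta)}}$ weight), and one must be careful that the discrete Gronwall inequality is applied in a form that tolerates the $m$-dependent term $\max_{1\leq n\leq m}\|G^n\|^2$ on the right. Everything else — the positivity of the memory term via Lemma \ref{lemma3.3}, the telescoping via Lemma \ref{lemma3.4}, and the norm equivalence via Lemma \ref{lemma3.5} — is a direct application of the lemmas already established.
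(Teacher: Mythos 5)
Your proposal follows essentially the same route as the paper: inner product with $\delta_tu^{n-\frac{1}{2}}$, positivity of the memory term via Lemma \ref{lemma3.3}, telescoping via Lemma \ref{lemma3.4}, Abel summation on the source term, Young's inequality with the $\frac{L^{\beta}}{Kc_*^{(\beta)}}$ splits, absorption, Gronwall, and the norm equivalence of Lemma \ref{lemma3.5}. The only organizational difference is that the paper converts the Young terms $\frac{Kc_*^{(\beta)}}{4L^{\beta}}\|u^n\|^2$ up to $\frac{K}{4}\|Bu^n\|^2$ and runs the Gronwall inequality entirely in $\|Bu^n\|^2$, passing to $\|u^m\|^2$ only at the very end --- which is precisely what makes the exponential factor come out as exactly $\exp(T)$ rather than requiring the constant tracking you flag as a concern.
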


\begin{proof}
Taking the inner product of \eqref{3.1} with $\delta_tu^{n-\frac{1}{2}}$, we get
\begin{align*}
&\Delta\alpha\sum\limits_{l=0}^{2J}c_l\omega(\alpha_l)\frac{1}{\tau^{\gamma_l}}\sum\limits_{k=0}^{n-1}
\lambda_k^{(\gamma_l)}\left(\delta_tu^{n-k-\frac{1}{2}},\delta_tu^{n-\frac{1}{2}}\right)\\
=&-K\left(\delta_x^{\beta}u^{n-\frac{1}{2}},\delta_tu^{n-\frac{1}{2}}\right)
+\left(G^{n},\delta_tu^{n-\frac{1}{2}}\right),\quad 1\leq n\leq N.
\end{align*}
Using Lemma \ref{lemma3.4}, we get
\begin{align*}
&\Delta\alpha\sum\limits_{l=0}^{2J}c_l\omega(\alpha_l)\frac{1}{\tau^{\gamma_l}}\sum\limits_{k=0}^{n-1}
\lambda_k^{(\gamma_l)}\left(\delta_tu^{n-k-\frac{1}{2}},\delta_tu^{n-\frac{1}{2}}\right)\\
&+\frac{K}{2\tau}\left(\parallel Bu^n\parallel^2-\parallel Bu^{n-1}\parallel^2\right)
=\left(G^{n},\delta_tu^{n-\frac{1}{2}}\right),\quad 1\leq n\leq N.
\end{align*}
Summing up the above equality for $n$ from 1 to $m$, we have
\begin{align*}
&\Delta\alpha\sum\limits_{l=0}^{2J}c_l\omega(\alpha_l)\frac{1}{\tau^{\gamma_l}}
\sum\limits_{n=1}^{m}\sum\limits_{k=0}^{n-1}
\lambda_k^{(\gamma_l)}\left(\delta_tu^{n-k-\frac{1}{2}},\delta_tu^{n-\frac{1}{2}}\right)\\
&+\frac{K}{2\tau}\left(\parallel Bu^m\parallel^2-\parallel Bu^{0}\parallel^2\right)
=\sum\limits_{n=1}^{m}\left(G^{n},\delta_tu^{n-\frac{1}{2}}\right),\quad 1\leq m\leq N.
\end{align*}
According to Lemma \ref{lemma3.3}, we obtain
\begin{align*}
\Delta\alpha\sum\limits_{l=0}^{2J}c_l\omega(\alpha_l)\frac{1}{\tau^{\gamma_l}}
\sum\limits_{n=1}^{m}\sum\limits_{k=0}^{n-1}
\lambda_k^{(\gamma_l)}\left(\delta_tu^{n-k-\frac{1}{2}},\delta_tu^{n-\frac{1}{2}}\right)\geq0.
\end{align*}
Consequently, we can get
\begin{align*}
\frac{K}{2\tau}\left(\| Bu^m\|^2-\| Bu^{0}\|^2\right)
\leq&\sum\limits_{n=1}^{m}\left(G^{n},\delta_tu^{n-\frac{1}{2}}\right)\\
=&\frac{1}{\tau}\left[\sum\limits_{n=1}^{m}(G^n,u^n)-\sum\limits_{n=1}^{m}(G^n,u^{n-1})\right]\\
=&-\sum\limits_{n=1}^{m-1}(\frac{G^{n+1}-G^n}{\tau},u^n)-\frac{1}{\tau}(G^1,u^0)+
\frac{1}{\tau}(G^m,u^m)\\
\leq&\sum\limits_{n=1}^{m-1}\left[\frac{L^{\beta}}{Kc_*^{(\beta)}}\parallel \delta_tG^{n+\frac{1}{2}}\parallel^2+\frac{Kc_*^{(\beta)}}{4L^{\beta}}\parallel u^n\parallel^2\right]\\
&+\frac{1}{\tau}\left[\frac{L^{\beta}}{Kc_*^{(\beta)}}\parallel G^1\parallel^2+\frac{Kc_*^{(\beta)}}{4L^{\beta}}\parallel u^0\parallel^2\right]\\
&+\frac{1}{\tau}\left[\frac{L^{\beta}}{Kc_*^{(\beta)}}\parallel G^m\parallel^2+\frac{Kc_*^{(\beta)}}{4L^{\beta}}\parallel u^m\parallel^2\right]\\
\leq&\sum\limits_{n=1}^{m-1}\left[\frac{L^{\beta}}{Kc_*^{(\beta)}}\parallel \delta_tG^{n+\frac{1}{2}}\parallel^2+\frac{K}{4}\parallel Bu^n\parallel^2\right]\\
&+\frac{1}{\tau}\left[\frac{L^{\beta}}{Kc_*^{(\beta)}}\parallel G^1\parallel^2+\frac{K}{4}\parallel Bu^0\parallel^2\right]\\
&+\frac{1}{\tau}\left[\frac{L^{\beta}}{Kc_*^{(\beta)}}\parallel G^m\parallel^2+\frac{K}{4}\parallel Bu^m\parallel^2\right],\quad 1\leq m\leq N,
\end{align*}
where Lemma \ref{lemma3.5} applies to the last step.
Then
\begin{align*}
\parallel Bu^m\parallel^2\leq&
3\parallel Bu^0\parallel^2
+\frac{4L^\beta}{K^2c_{*}^{(\beta)}}\left(\parallel G^1\parallel^2+\max\limits_{1\leq n\leq m}\parallel G^n\parallel^2\right)\\ &+\frac{4L^\beta}{K^2c_{*}^{(\beta)}}\tau\sum\limits_{n=1}^{m-1}\parallel\delta_tG^{n+\frac{1}{2}}\parallel^2
+\tau\sum\limits_{n=1}^{m-1}\parallel Bu^n\parallel^2,\quad 1\leq m\leq N.
\end{align*}
Applying the Gronwall inequality \cite[Lemma 2]{Sun2013appliedmm} for above equation, we obtain
\begin{align*}
\parallel Bu^m\parallel^2\leq&
exp(T)\Bigg[3\parallel Bu^0\parallel^2
+\frac{4L^\beta}{K^2c_{*}^{(\beta)}}\left(\parallel G^1\parallel^2+\max\limits_{1\leq n\leq m}\parallel G^n\parallel^2\right) \\ &\quad\quad+\frac{4L^\beta}{K^2c_{*}^{(\beta)}}\tau\sum\limits_{n=1}^{m-1}\parallel\delta_tG^{n+\frac{1}{2}}\parallel^2\Bigg],\quad 1\leq m\leq N.
\end{align*}
Then by Lemma \ref{lemma3.5}, it arrives at
\begin{align*}
\parallel u^m\parallel^2\leq&
exp(T)\Bigg[\frac{3L^\beta}{c_{*}^{(\beta)}}\mid u^0\mid_{H^{\beta/2}}^2
+\left(\frac{2L^\beta}{Kc_{*}^{(\beta)}}\right)^2\left(\parallel G^1\parallel^2+\max\limits_{1\leq n\leq m}\parallel G^n\parallel^2\right) \\ &\quad\quad+\left(\frac{2L^\beta}{Kc_{*}^{(\beta)}}\right)^2\tau\sum\limits_{n=1}^{m-1}\parallel\delta_tG^{n+\frac{1}{2}}\parallel^2\Bigg],\quad 1\leq m\leq N.
\end{align*}
The proof completed.
\end{proof}

\subsection{Convergence}\label{section3.3}
Now, we consider the convergence of the proposed difference scheme \eqref{2.11}-\eqref{2.13}. The following theorem verifies the second-order accuracy of our proposed difference scheme in time, space and distributed order.
\begin{theorem}
 Suppose that Eqs. \eqref{1.1}-\eqref{1.3} has a sufficiently smooth solution $u(x,t)\in C_{x,t}^{5,3}([0,L]\times[0,T])$. Let $U_i^n=u(x_i,t_n)$ be the exact solution of Eqs. \eqref{1.1}-\eqref{1.3} and $u_i^n$ be the numerical approximation  of the difference scheme \eqref{2.11}-\eqref{2.13}. The error $e_i^n=U_i^n-u_i^n~(0\leq i\leq M,~0\leq n\leq N$).
 Then there is a positive constant $C$ such that the error satisfies
\begin{align*}
\parallel e^n\parallel
\leq
\frac{2L^\beta}{Kc_{*}^{(\beta)}}\sqrt{exp(T)L(2+T)}C(\Delta\alpha^2+\tau^2+h^2),
\quad 1\leq n\leq N.
\end{align*}
\label{th3.3}
\end{theorem}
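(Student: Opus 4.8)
The plan is to convert the convergence statement into a stability statement for the error equation and then invoke Theorem~\ref{th3.2}. First I would write down the equation satisfied by the error $e_i^n = U_i^n - u_i^n$. Subtracting the difference scheme \eqref{2.11}--\eqref{2.13} from the consistency identity \eqref{2.8}, the error satisfies exactly the system \eqref{3.1}--\eqref{3.3} with source term $G_i^n = p_i^n$ (the local truncation error) and with vanishing initial data $\phi_i = 0$, since both $U_i^0$ and $u_i^0$ are zero by \eqref{2.9} and \eqref{2.12}. Here $p_i^n = \mathcal{O}(\Delta\alpha^2 + \tau^2 + h^2)$ under the smoothness hypothesis $u\in C_{x,t}^{5,3}$, combining the composite-trapezoid remainder $R_1$, the weighted-and-shifted Gr\"unwald remainder $R_2$, and the fractional centered difference remainder $\mathcal{O}(h^2)$; so there is a constant $C$, independent of $\Delta\alpha,\tau,h$, with $|p_i^n| \le C(\Delta\alpha^2+\tau^2+h^2)$ for all $1\le i\le M-1$, $1\le n\le N$.

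Next I would feed $G^n = p^n$ into the a priori bound of Theorem~\ref{th3.2}. Since $e^0 = 0$ the term $|u^0|_{H^{\beta/2}}^2$ drops out, leaving
\begin{align*}
\parallel e^m\parallel^2 \le exp(T)\left(\frac{2L^\beta}{Kc_{*}^{(\beta)}}\right)^2\left[\parallel p^1\parallel^2 + \max\limits_{1\le n\le m}\parallel p^n\parallel^2 + \tau\sum\limits_{n=1}^{m-1}\parallel \delta_t p^{n+\frac12}\parallel^2\right].
\end{align*}
It remains to estimate the three bracketed terms. Using the definition $\parallel p^n\parallel^2 = h\sum_{i=1}^{M-1}(p_i^n)^2 \le h\,(M-1)\,C^2(\Delta\alpha^2+\tau^2+h^2)^2 \le L\,C^2(\Delta\alpha^2+\tau^2+h^2)^2$, both $\parallel p^1\parallel^2$ and $\max_n\parallel p^n\parallel^2$ are bounded by $LC^2(\Delta\alpha^2+\tau^2+h^2)^2$. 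For the last term I would note that, because the solution is smooth enough in $t$, the truncation error has a bounded discrete time-difference, $|\delta_t p_i^{n+\frac12}| \le C(\Delta\alpha^2+\tau^2+h^2)$ (absorbing a possibly enlarged constant $C$), so $\parallel \delta_t p^{n+\frac12}\parallel^2 \le LC^2(\Delta\alpha^2+\tau^2+h^2)^2$ and hence $\tau\sum_{n=1}^{m-1}\parallel\delta_t p^{n+\frac12}\parallel^2 \le \tau (m-1) LC^2(\cdots)^2 \le TLC^2(\Delta\alpha^2+\tau^2+h^2)^2$. Collecting the three contributions gives the factor $L(2+T)$ inside the bracket, so
\begin{align*}
\parallel e^m\parallel^2 \le exp(T)\left(\frac{2L^\beta}{Kc_{*}^{(\beta)}}\right)^2 L(2+T)\,C^2(\Delta\alpha^2+\tau^2+h^2)^2,
\end{align*}
and taking square roots yields the claimed bound.

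The main obstacle I anticipate is the bound on $\delta_t p^{n+\frac12}$: unlike the pointwise bound on $p_i^n$ itself, controlling its discrete time-increment by $\mathcal{O}(\Delta\alpha^2+\tau^2+h^2)$ requires that each of the three remainder pieces ($R_1$ from the $\alpha$-quadrature, $R_2$ from the Gr\"unwald approximation, and the spatial $\mathcal{O}(h^2)$ term) depends smoothly on $t_n$ with derivatives controlled by the same powers — this is exactly where the extra time-regularity $u\in C^{5,3}$ (rather than just $C^{5,2}$) is used, and one must check that differentiating the remainder expansions in the time index does not cost an extra power of $\tau^{-1}$. Everything else is routine: the passage from the error equation to \eqref{3.1}--\eqref{3.3} is a direct subtraction, and the a priori estimate of Theorem~\ref{th3.2} does all the heavy analytic lifting (the Gronwall argument, the spectral lower bound from Lemma~\ref{lemma3.5}, and the positivity from Lemma~\ref{lemma3.3}).
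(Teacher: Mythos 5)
Your proposal is correct and follows essentially the same route as the paper: subtract the scheme from the consistency identity \eqref{2.8}--\eqref{2.10} to obtain the error system \eqref{3.4}--\eqref{3.6} with source $p_i^n$ and zero initial data, invoke the a priori estimate of Theorem~\ref{th3.2}, bound $\parallel p^n\parallel^2$ and $\parallel\delta_t p^{n+\frac{1}{2}}\parallel^2$ by $LC^2(\Delta\alpha^2+\tau^2+h^2)^2$, and collect the factor $L(2+T)$ before taking square roots. The paper simply asserts the bound \eqref{3.8} on $\delta_t p^{n-\frac{1}{2}}$ without the regularity discussion you flag, so your extra remark is a reasonable elaboration rather than a deviation.
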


\begin{proof}
Subtracting \eqref{2.11}-\eqref{2.13} from \eqref{2.8}-\eqref{2.10}, we can get
\begin{align}
\nonumber
&\Delta\alpha\sum\limits_{l=0}^{2J}c_l\omega(\alpha_l)\frac{1}{\tau^{\gamma_l}}\sum\limits_{k=0}^{n-1}
\lambda_k^{(\gamma_l)}\delta_te_i^{n-k-\frac{1}{2}}
=-Kh^{-\beta}\sum\limits_{k=i-M}^i\hat{g}_k^{(\beta)}e_{i-k}^{n-\frac{1}{2}}+p_i^n,&\\
&\quad\qquad\quad\qquad\quad\qquad\quad\qquad\quad\qquad\quad\qquad 1\leq i\leq M-1,~1\leq n\leq N,\label{3.4}&\\
&e_i^0=0,\quad 1\leq i\leq M-1,\label{3.5}&\\
&e_0^n=0, \quad e_M^n=0,\quad 0\leq n\leq N.\label{3.6}&
\end{align}
Since $p_i^n=\mathcal{O}(\Delta\alpha^2+\tau^2+h^2)$, there exists a positive constant $C$, such that
\begin{align}
&\mid p_i^n\mid\leq C(\Delta\alpha^2+\tau^2+h^2),\label{3.7}\\
&\mid \delta_tp_i^{n-\frac{1}{2}}\mid\leq C(\Delta\alpha^2+\tau^2+h^2).\label{3.8}
\end{align}

Using Theorem \ref{th3.2} and \eqref{3.7}-\eqref{3.8}, and noticing the \eqref{3.5}-\eqref{3.6}, we obtain
\begin{equation*}
\begin{split}
\|e^m\|^2 \leq&
exp(T)\Bigg[\frac{3L^\beta}{c_{*}^{(\beta)}}\mid u^0\mid_{H^{\beta/2}}^2+
\left(\frac{2L^\beta}{Kc_{*}^{(\beta)}}\right)^2\left(\parallel p^1\parallel^2+
\max\limits_{1\leq n\leq m}\parallel p^n\parallel^2+
\tau\sum\limits_{n=1}^{m-1}\parallel\delta_tp^{n+\frac{1}{2}}\parallel^2\right)\Bigg]\\
\leq&
exp(T)\left(\frac{2L^\beta}{Kc_{*}^{(\beta)}}\right)^2
\left[2LC^2(\Delta\alpha^2+\tau^2+h^2)^2+\tau (m-1)LC^2(\Delta\alpha^2+\tau^2+h^2)^2\right]\\
\leq&
exp(T)\left(\frac{2L^\beta}{Kc_{*}^{(\beta)}}\right)^2(2+T)LC^2(\Delta\alpha^2+\tau^2+h^2)^2,
\quad 1\leq m\leq N.
\end{split}
\end{equation*}
Further, if follows that
\begin{align*}
\parallel e^m\parallel
\leq
\frac{2L^\beta}{Kc_{*}^{(\beta)}}\sqrt{exp(T)L(2+T)}C(\Delta\alpha^2+\tau^2+h^2),
\quad 1\leq m\leq N.
\end{align*}
This completes the proof.
\end{proof}

\section{PKSM-based GSF}\label{section4}
In this section, the implementation of the numerical method \eqref{2.14} is analyzed. Based on the SPD Toeplitz structure of the coefficient matrix, we will design the GSF utilizing the preconditioned CG (PCG-based GSF) method with a circulant preconditioner to solve the linear system \eqref{2.14}.

In general, the implementation of the linear system \eqref{2.14} can be represented by the following algorithm.
\begin{algorithm}[H]
\caption{Practical implementation of \eqref{2.14}}
\label{alg1}
\begin{algorithmic}[1]
\small
\STATE {\textbf{for} $n=1,2,\cdot\cdot\cdot,N$, \textbf{do}}
\STATE\quad {Compute $b^{n-1}=-K\nu_{\beta}G_\beta u^{n-1}
 +\sum\limits_{k=1}^{n-1}(\mu_{k-1}-\mu_k)u^{n-k}
 +\frac{\tau}{2}(F^n+F^{n-1})$}
\STATE\quad {Solve $Au^{n}=b^{n-1}$}
\STATE {\textbf{end for}}
\end{algorithmic}
\end{algorithm}
In Algorithm \ref{alg1}, by exploiting the Toeplitz structure of the matrix $G_\beta$, the matrix-vector product $G_\beta u^{n-1}$ in Step 2 can be done in $\mathcal{O}(M\log M)$ operations by FFTs.
Since the coefficient matrix $A$ is time-independent (it does not depend on $n$), the solutions of \eqref{2.14} in Step 3 can be written as $u^n=A^{-1}b^{n-1}~(n=1,2,\cdot\cdot\cdot,N$) and can be calculated at the cost of one Cholesky factorization. However, the method requires considerable cost when $A$ is large and dense. Fortunately, $A$ has a SPD Toeplitz structure, so $A^{-1}b$ can be directly calculated by GSF \cite{Pang2011Shift,Li2018A,gu2016fast} with limited memory and computational cost. More precisely, let $l$=$(l_1,l_2,\cdot\cdot\cdot,l_{M-1})^T$ be the solution of the linear system
\begin{equation}\label{4.1}
Al=e_1=(1,0,\cdot\cdot\cdot,0)^T.
\end{equation}
Then the matrix-vector multiplication $A^{-1}b$ by the GSF is formulated as
$$A^{-1}b=Re(z)+JIm(z),$$
where $J$ is a $M-1$ dimensional anti-identity matrix, $Re(z)$ and $Im(z)$ represent the real and imaginary parts of $z$, respectively, and
\begin{equation}\label{4.2}
z=\frac{1}{2l_1}\left[(L+\hat{L}^T)(L^T-\hat{L})\right](b+\textbf{i}Jb)
\end{equation}
with $L$ and $\hat{L}$ are lower Toeplitz matrices which are defined as

\begin{equation*}
L=
\begin{bmatrix}
{l_1}&0&\cdots&0\\
l_2&l_1&\ddots&\vdots\\
\vdots&\ddots&\ddots&0\\
l_{M-1}&\cdots&l_2&l_1
\end{bmatrix}
\quad
\rm{and}
\quad
\hat{L}=
\begin{bmatrix}
0&0&\cdots&0\\
l_{M-1}&0&\ddots&\vdots\\
\vdots&\ddots&\ddots&0\\
l_2&\cdots&l_{M-1}&0
\end{bmatrix},
\end{equation*}
respectively.

We note that in \eqref{4.2}, $L+\hat{L}^T$ is a circulant matrix \cite{lei2013circulant} and $L^T-\hat{L}$ is a skew-circulant \cite{Ng2003Circulant} matrix. Therefore, the cost of computing the $A^{-1}b$ is almost the same as computing one circulant and one skew-circulant matrix-vector product, or roughly only four $(M-1)$-length FFTs with $\mathcal{O}(M\log M)$ complexity. The algorithm to compute the product $A^{-1}b$ via GSF is given in Algorithm \ref{alg2} as follows:

\begin{algorithm}[H]
\caption{GSF for solving $z=A^{-1}b$}
\label{alg2}
\begin{algorithmic}[1]
\small
\STATE {Solve the linear system $Al=e_1$ in \eqref{4.1}}
\STATE {Compute $z=(L^T-\hat{L})(b+\textbf{i}Jb)$ via FFTs ($\mathbf{i} = \sqrt{-1}$)}
\STATE {Compute $z=\frac{1}{2l_1}(L+\hat{L}^T)z$ via FFTs}
\STATE {Compute $z=Re(z)+JIm(z)$}
\end{algorithmic}
\end{algorithm}

In summary, once the $l$ in \eqref{4.1} is obtained, the matrix-vector product $A^{-1}b$ can be done in $\mathcal{O}(M\log M)$ operations with Algorithm \ref{alg2}. Next, an efficient iterative algorithm will be developed to solve the SPD Toeplitz linear system \eqref{4.1}.

It is well-known that the CG method \cite{chan1989toeplitz} is a popular and effective KSM for solving the SPD linear systems. However, the CG method usually converges slowly when the coefficient matrix $A$ is ill-conditioned or large. Therefore, the preconditioned CG (PCG) method is proposed to solve the linear system \eqref{4.1}, whose computational complexity at each time step is only $\mathcal{O}(M\log M)$.

Now, a circulant preconditioner, which is generated from the R. Chan's
\cite{Ng2004Iterative} preconditioner, is proposed to solve the linear system
\eqref{4.1}. Note that other circulant approximations, such as the Strang's preconditioner \cite{chan2007introduction}, are also available but will not be discussed here.
For a Toeplitz matrix $G\in\mathbb{C}^{n\times n}$ defined as in \eqref{2.16}, the R. chan's preconditioner $r(G)$ is a circulant matrix obtained by using all entries of $G$ \cite{chan2007introduction}. More precisely, its entries $r_{ij}=r_{i-j}$ are given by
\begin{eqnarray*}
r_k=
\begin{cases}
\hat{g}_0,    &k=0,\\
\hat{g}_k+\hat{g}_{k-n},      &0<k<n\\
r_{k+n},&0<-k<n.
\end{cases}
\end{eqnarray*}
Then, solving \eqref{4.1} is equivalent to solving the following preconditioned system
\begin{equation*}
 R^{-1}Al=R^{-1}e_1,
\end{equation*}
where the R. Chan's-based circulant preconditioner $R$ is defined as
\begin{equation}\label{4.3}
 R=\mu_0I+K\nu_{\beta}r(G_\beta).
 \end{equation}
More precisely, the first column of $r(G_\beta)$ is given by
 \begin{center}
$\left(
  \begin{array}{c}
    \hat{g}_0^{(\beta)} \\
   \hat{g}_1^{(\beta)}+\hat{g}_{2-M}^{(\beta)} \\
   \hat{g}_2^{(\beta)}+\hat{g}_{3-M}^{(\beta)} \\
    \vdots \\
    \vdots \\
    \hat{g}_{M-3}^{(\beta)}+\hat{g}_{-2}^{(\beta)} \\ \\
    \hat{g}_{M-2}^{(\beta)}+\hat{g}_{-1}^{(\beta)} \\ \\
  \end{array}
\right)$.
 \end{center}

%
%

To analyze the properties of the preconditioner $R$, we first prove the following lemma.
\begin{lemma}
For $1<\beta\leq2$, the circulant matrix $r(G_\beta)$ is real SPD, and all its eigenvalues fall inside $(0, 2\hat{g}_0^{(\beta)})$.
\label{lemma4.1}
\end{lemma}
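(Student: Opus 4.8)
The plan is to establish the three assertions in Lemma~\ref{lemma4.1}---that $r(G_\beta)$ is real, that it is symmetric, and that its eigenvalues lie in $(0,2\hat{g}_0^{(\beta)})$---by exploiting the explicit entry formula for R.~Chan's preconditioner together with the structural properties of $\hat{g}_k^{(\beta)}$ collected in Lemma~\ref{lemma3.1}. Reality is immediate since every $\hat{g}_k^{(\beta)}$ is real, hence so is each $r_k$. For symmetry, I would verify that $r_k=r_{-k}$ for all relevant $k$: using $r_k=\hat{g}_k^{(\beta)}+\hat{g}_{k-n}^{(\beta)}$ for $0<k<n$ (with $n=M-1$) and the fact from Lemma~\ref{lemma3.1} that $\hat{g}_{-j}^{(\beta)}=\hat{g}_j^{(\beta)}$, one gets $r_{-k}=r_{n-k}=\hat{g}_{n-k}^{(\beta)}+\hat{g}_{-k}^{(\beta)}=\hat{g}_{k-n}^{(\beta)}+\hat{g}_k^{(\beta)}=r_k$; so $r(G_\beta)$ is a real symmetric circulant.

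Next I would compute the eigenvalues of the circulant $r(G_\beta)$ explicitly. A circulant with first column entries $r_0,r_1,\dots,r_{n-1}$ has eigenvalues $\sigma_j=\sum_{k=0}^{n-1} r_k\,\zeta^{jk}$ where $\zeta=e^{2\pi\mathbf{i}/n}$. Substituting the definition of $r_k$ and reindexing, this sum collapses to $\sigma_j=\sum_{k=-(n-1)}^{n-1}\hat{g}_k^{(\beta)}\,\zeta^{jk}$ (each off-diagonal Grünwald weight $\hat{g}_k^{(\beta)}$ and its partner $\hat{g}_{k-n}^{(\beta)}$ get folded to the same root of unity because $\zeta^{jn}=1$). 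Thus $\sigma_j=\hat{g}_0^{(\beta)}+2\sum_{k=1}^{n-1}\hat{g}_k^{(\beta)}\cos(2\pi jk/n)$, using the evenness $\hat{g}_{-k}^{(\beta)}=\hat{g}_k^{(\beta)}$.

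From this closed form the eigenvalue bounds follow from the sign information in Lemma~\ref{lemma3.1}: since $\hat{g}_k^{(\beta)}\le 0$ for $k\ge 1$ and $\cos(2\pi jk/n)\ge -1$, we get $\sigma_j\le \hat{g}_0^{(\beta)}+2\sum_{k=1}^{n-1}|\hat{g}_k^{(\beta)}|$; and because $\sum_{k=-\infty}^{\infty}\hat{g}_k^{(\beta)}=0$ forces $\sum_{k=1}^{\infty}|\hat{g}_k^{(\beta)}|=\tfrac12\hat{g}_0^{(\beta)}$, the partial sum $2\sum_{k=1}^{n-1}|\hat{g}_k^{(\beta)}|<\hat{g}_0^{(\beta)}$ strictly, giving $\sigma_j<2\hat{g}_0^{(\beta)}$. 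For positivity, using $\cos(2\pi jk/n)\le 1$ and $\hat{g}_k^{(\beta)}\le 0$ we have $\hat{g}_k^{(\beta)}\cos(2\pi jk/n)\ge \hat{g}_k^{(\beta)}$, so $\sigma_j\ge \hat{g}_0^{(\beta)}+2\sum_{k=1}^{n-1}\hat{g}_k^{(\beta)} > \hat{g}_0^{(\beta)}+2\sum_{k=1}^{\infty}\hat{g}_k^{(\beta)}=\hat{g}_0^{(\beta)}-\hat{g}_0^{(\beta)}=0$, where strictness comes from truncating the tail of strictly negative terms (Lemma~\ref{lemma3.1} guarantees $\hat{g}_k^{(\beta)}<0$ for $k\ge1$). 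Hence all $\sigma_j\in(0,2\hat{g}_0^{(\beta)})$, which makes $r(G_\beta)$ real SPD.

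The only genuinely delicate point is making the two strict inequalities airtight: one must be careful that truncating an infinite series of negative terms really produces a \emph{strict} gain and that the case $j=0$ (where all cosines equal one) does not accidentally saturate the upper bound---here the strictness is saved precisely by the tail sum $2\sum_{k\ge n}|\hat{g}_k^{(\beta)}|>0$, which Lemma~\ref{lemma3.1}'s lower bound $\sum_{|k|=l}^\infty|\hat{g}_k^{(\beta)}|\ge c_*^{(\beta)}/(l+1)^\beta$ confirms is positive. Everything else is bookkeeping with the circulant eigenvalue formula.
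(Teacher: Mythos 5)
Your argument is correct and reaches the same numerical estimate as the paper, but by a different mechanism: the paper applies the Gershgorin disc theorem to $r(G_\beta)$ (all discs centered at $\hat{g}_0^{(\beta)}$ with radius $\sum_{j=1}^{M-2}|\hat{g}_j^{(\beta)}+\hat{g}_{j-M+1}^{(\beta)}|=-\sum_{|j|=1}^{M-2}\hat{g}_j^{(\beta)}<\hat{g}_0^{(\beta)}$), whereas you diagonalize the circulant explicitly and evaluate its symbol at the roots of unity, obtaining $\sigma_j=\hat{g}_0^{(\beta)}+2\sum_{k=1}^{n-1}\hat{g}_k^{(\beta)}\cos(2\pi jk/n)$ before bounding the cosines. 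Since for a circulant the Gershgorin bound and the trivial bound $|\sum_k a_k\zeta^{jk}|\le\sum_k|a_k|$ on the symbol coincide, the two proofs rest on exactly the same inequality, namely that the absolute off-diagonal row sum is strictly below $\hat{g}_0^{(\beta)}$; your version is slightly more explicit and also exhibits the eigenvalues, while the paper's is shorter. One caveat, which you partially anticipate and which the paper shares: the strictness of both inequalities relies on the truncated tail $\sum_{|k|\ge M-1}|\hat{g}_k^{(\beta)}|$ being positive, and at the endpoint $\beta=2$ this fails, since $\hat{g}_k^{(2)}=0$ for $|k|\ge 2$ and the lower bound you invoke is vacuous there because $c_*^{(2)}=0$; indeed $r(G_2)=\mathrm{circ}(2,-1,0,\dots,0,-1)$ has $\sigma_0=0$, so the lemma as stated (and both proofs) really only hold for $1<\beta<2$. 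Within that range your argument is airtight, since the coefficients $\hat{g}_k^{(\beta)}$ are then strictly negative for all $k\ge1$ by the recurrence in Lemma \ref{lemma3.1}.
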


\begin{proof}
According to Lemma \ref{lemma3.1}, it is easy to obtain that $r(G_\beta)$ is a real symmetric matrix, and all the Gershgorin disc \cite{lei2013circulant} of the circulant matrix $r(G_\beta)$ are centered at $\hat{g}_0^{(\beta)}$ with radius
$$r=\sum\limits_{j=1}^{M-2}|\hat{g}_j^{(\beta)}+\hat{g}_{j-M+1}^{(\beta)}|
   =-\sum\limits_{|j|=1}^{M-2}\hat{g}_j^{(\beta)}
   <-\sum\limits_{|j|=1}^{\infty}\hat{g}_j^{(\beta)}
   =\hat{g}_0^{(\beta)}.$$
Based on the Gershgorin theorem, we get that all the eigenvalues of $r(G_\beta)$ fall inside $(0, 2\hat{g}_0^{(\beta)})$. Therefore, the matrix $r(G_\beta)$ is real SPD.
\end{proof}

Then we can conclude that $R$ has the following properties.

\begin{theorem}
For $1<\beta\leq2$, the circulant preconditioner $R$ in \eqref{4.3} is SPD and
\begin{equation*}
\parallel R^{-1}\parallel_2<\frac{1}{\mu_0},\quad
\parallel R\parallel_2<\mu_0+2K\nu_{\beta}\hat{g}_0^{(\beta)}.
\end{equation*}
\label{th4.1}
\end{theorem}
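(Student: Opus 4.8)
The plan is to use the decomposition $R=\mu_0 I+K\nu_\beta r(G_\beta)$ from \eqref{4.3} together with Lemma \ref{lemma4.1}, which already furnishes the crucial spectral information about the circulant part. First I would establish that $R$ is SPD: since $\mu_0>0$ by Lemma \ref{lemma3.2}, $K>0$, and $\nu_\beta=\tfrac{\tau}{2h^\beta}>0$, while $r(G_\beta)$ is real SPD by Lemma \ref{lemma4.1}, the matrix $R$ is a positive combination of the identity and a real SPD matrix, hence real symmetric positive definite. Moreover $I$ and $r(G_\beta)$ are simultaneously diagonalizable (both circulant), so the eigenvalues of $R$ are exactly $\mu_0+K\nu_\beta\theta_j$, where the $\theta_j$ are the eigenvalues of $r(G_\beta)$.

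Next I would read off the two norm bounds from this eigenvalue description. Because $R$ is SPD, $\|R\|_2$ equals its largest eigenvalue and $\|R^{-1}\|_2$ equals the reciprocal of its smallest eigenvalue. By Lemma \ref{lemma4.1} every $\theta_j$ satisfies $0<\theta_j<2\hat g_0^{(\beta)}$. Hence $\mu_0<\mu_0+K\nu_\beta\theta_j<\mu_0+2K\nu_\beta\hat g_0^{(\beta)}$ for all $j$; taking the maximum over $j$ gives $\|R\|_2<\mu_0+2K\nu_\beta\hat g_0^{(\beta)}$, and taking the minimum over $j$ gives $\lambda_{\min}(R)>\mu_0$, so $\|R^{-1}\|_2=1/\lambda_{\min}(R)<1/\mu_0$. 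Both inequalities are strict precisely because the bounds in Lemma \ref{lemma4.1} are strict (the Gershgorin estimate there is a strict inequality).

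There is essentially no serious obstacle here: the theorem is an immediate corollary of Lemma \ref{lemma4.1} via the simultaneous-diagonalizability of circulants, and the only thing to be careful about is making explicit the elementary fact that for an SPD matrix the spectral norm is the largest eigenvalue and the norm of the inverse is the reciprocal of the smallest eigenvalue. If I wanted to avoid even invoking simultaneous diagonalization, I could instead argue directly: $\|R^{-1}\|_2\le \|(\mu_0 I)^{-1}\|_2=1/\mu_0$ is too weak to get strictness, so the eigenvalue argument is the clean route, and the mild ``hard part'' is simply ensuring the strictness of the first bound is justified by the open interval in Lemma \ref{lemma4.1} rather than its closure.
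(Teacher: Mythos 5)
Your proposal is correct and follows essentially the same route as the paper's proof: both establish positive definiteness from $\mu_0,K,\nu_\beta>0$ together with Lemma \ref{lemma4.1}, write the eigenvalues of $R$ as $\mu_0+K\nu_\beta\lambda_k(r(G_\beta))$, and read off the two norm bounds from the resulting open interval $(\mu_0,\ \mu_0+2K\nu_\beta\hat{g}_0^{(\beta)})$. Your explicit remarks on simultaneous diagonalizability of circulants and on where strictness comes from are details the paper leaves implicit, but the argument is the same.
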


\begin{proof}
By Lemma \ref{lemma4.1} and noting that $\mu_0>0$, $K>0$ and $\nu_\beta>0$, the circulant preconditioner $R$ is also a SPD matrix.

Suppose that $\{\lambda_k(r(G_\beta))|k=1,2,\cdot\cdot\cdot,M-1\}$ are the eigenvalues of $r(G_\beta)$. According  to Lemma \ref{lemma4.1}, we have $0<\lambda_k(r(G_\beta))<2\hat{g}_0^{(\beta)}$ for each $k=1,2,\cdot\cdot\cdot,M-1$. Since the $k$-th eigenvalue of $R$ is
\begin{equation*}
\lambda_k(R)=\mu_0+K\nu_{\beta}\lambda_k(r(G_\beta)),
\end{equation*}
we get $\mu_0<\lambda_k(R)<\mu_0+2K\nu_{\beta}\hat{g}_0^{(\beta)}$ for all $k=1,2,\cdot\cdot\cdot,M-1$.
Furthermore, we have
\begin{equation*}
\parallel R^{-1}\parallel_2=\frac{1}{\min\limits_{1\leq k\leq M-1}|\lambda_k(R)|}
<\frac{1}{\mu_0},
\end{equation*}
and
\begin{equation*}
\parallel R\parallel_2=\max\limits_{1\leq k\leq M-1}|\lambda_k(R)|
<\mu_0+2K\nu_{\beta}\hat{g}_0^{(\beta)},
\end{equation*}
which completes the proof.
\end{proof}

This theorem will be applied to prove the superlinear convergence rate of PCG method with R. Chan's circulant preconditioner in the next section.

\section{Spectrum of the preconditioned matrix}\label{section5}
In this section, the spectral properties of the preconditioned matrix $R^{-1}A$ are analyzed.
The eigenvalue distribution of the preconditioned matrix is one of the key factors that affect the convergence rate of the KSMs \cite{Ng2004Iterative}. Usually, the convergence speed is fast if the spectrum of the preconditioned matrix is away from zero, and the preconditioned matrix can be expressed as the sum of an identity matrix, a matrix with small norm, and a matrix with low rank, especially for those matrices close to normal \cite{Pan2014Preconditioning,Benzi2002Preconditioning}.
For convenience, we choose proper $N$, depending on $M$, and find a $\nu\in\mathbb{R}$ such that
\begin{equation}\label{5.1}
\nu_{\beta}\leq\nu
\end{equation}
for all $M$.


By the definition of generating function \cite[pp.12-20]{chan2007introduction}, we can obtain that the generating function of $G_\beta$ is
\begin{equation*}
p(\theta)=\sum\limits_{k=-\infty}^{\infty}\hat{g}_k^{(\beta)}e^{\textbf{i}k\theta}.
\end{equation*}
We remark that the $\mu_0+K\nu_{\beta}p(\theta)$ cannot be a generating function of $A$ in \eqref{2.15} since $\nu_{\beta}$ is dependent on $M$. In order to prove the superlinear convergence of the preconditioned matrix $R^{-1}A$, the following two lemmas are needed.

\begin{lemma}
Let $p(\theta)$ be the generating function of $\{G_{\beta}\}_{M=1}^\infty$, then we get that the $p(\theta)$ is in the Wiener class.
\label{lemma5.1}
\end{lemma}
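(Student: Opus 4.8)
The plan is to show that the Fourier coefficients $\hat g_k^{(\beta)}$ of $p(\theta)$ are absolutely summable, i.e. $\sum_{k=-\infty}^{\infty}|\hat g_k^{(\beta)}|<\infty$, which is exactly the definition of $p$ belonging to the Wiener class. First I would invoke Lemma~\ref{lemma3.1}: it already records that $\hat g_0^{(\beta)}\geq 0$, that $\hat g_{-k}^{(\beta)}=\hat g_k^{(\beta)}\leq 0$ for $k\geq 1$, and that $\sum_{k=-\infty}^{\infty}\hat g_k^{(\beta)}=0$. Since every term with $k\neq 0$ is nonpositive and the symmetric two-sided sum converges to $0$, the partial sums $\sum_{|k|\leq n}\hat g_k^{(\beta)}$ are monotone decreasing in $n$ and bounded below by $0$, hence the series of absolute values telescopes:
\begin{equation*}
\sum_{k=-\infty}^{\infty}|\hat g_k^{(\beta)}|
=\hat g_0^{(\beta)}-\sum_{|k|\geq 1}\hat g_k^{(\beta)}
=\hat g_0^{(\beta)}+\hat g_0^{(\beta)}=2\hat g_0^{(\beta)}<\infty.
\end{equation*}
So the absolute convergence is in fact immediate from sign information plus the known value of the (conditionally convergent) symmetric sum; no delicate asymptotics are required.

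If one instead wanted a self-contained estimate not leaning on the identity $\sum \hat g_k^{(\beta)}=0$, I would fall back on the recurrence $\hat g_k^{(\beta)}=\bigl(1-\tfrac{\beta+1}{\beta/2+k}\bigr)\hat g_{k-1}^{(\beta)}$ from Lemma~\ref{lemma3.1} to extract the large-$k$ decay rate. Writing the ratio as $1-\tfrac{\beta+1}{\beta/2+k}$ and taking a telescoping product from some fixed index, a standard Gamma-function/Raabe-type argument gives $\hat g_k^{(\beta)}\sim c\,k^{-(\beta+1)}$ as $k\to\infty$ for a constant $c$ depending on $\beta$; since $\beta+1>2>1$, the tail $\sum_k k^{-(\beta+1)}$ converges, which again yields absolute summability. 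This route also makes transparent why the claim uses $\beta>1$: it is the exponent $\beta+1>1$ that secures summability (indeed one has room to spare).

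The proof has essentially no obstacle — the only thing to be careful about is the logical order: the raw series $\sum_k \hat g_k^{(\beta)}$ is only conditionally convergent in the sense that one sums symmetrically, so I would state the telescoping argument in terms of symmetric partial sums $S_n=\sum_{|k|\leq n}\hat g_k^{(\beta)}$, note $S_n$ is nonincreasing (each new pair $\hat g_{\pm(n+1)}^{(\beta)}\leq 0$) and $S_n\to 0$, hence $0\leq S_n\leq S_0=\hat g_0^{(\beta)}$, and therefore $\sum_{|k|\leq n}|\hat g_k^{(\beta)}|=2\hat g_0^{(\beta)}-S_n\leq 2\hat g_0^{(\beta)}$ for every $n$. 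Letting $n\to\infty$ gives the bound $\sum_k|\hat g_k^{(\beta)}|=2\hat g_0^{(\beta)}$, so $p(\theta)$ is in the Wiener class, completing the proof.
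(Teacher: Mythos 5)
Your main argument is exactly the paper's proof: invoking Lemma~\ref{lemma3.1} to get $\sum_{k=-\infty}^{\infty}|\hat g_k^{(\beta)}|=\hat g_0^{(\beta)}-\sum_{|k|=1}^{\infty}\hat g_k^{(\beta)}=2\hat g_0^{(\beta)}<\infty$, hence $p(\theta)$ is in the Wiener class. Your extra care with symmetric partial sums and the alternative decay-rate argument are sound but not needed; the approach matches the paper's.
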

\begin{proof}
By Lemma \ref{lemma3.1}, we get
$\sum\limits_{k=-\infty}^{\infty}|\hat{g}_k^{(\beta)}|
=\hat{g}_0^{(\beta)}-\sum\limits_{|k|=1}^{\infty}\hat{g}_k^{(\beta)}
=2\hat{g}_0^{(\beta)}
<\infty,$
which proves that $p(\theta)$ is in the Wiener class \cite[pp.17-20]{chan2007introduction}.
\end{proof}

\begin{lemma} \cite{Chan1996Conjugate}
If $p(\theta)$, the generating function of $\{G_{\beta}\}_{M=1}^\infty$, is in the Wiener class, then for any
$\varepsilon>0$, there exist $M'$ and $N'>0$, such that for all $M>M'$, we have
\begin{equation*}
G_{\beta}-r(G_{\beta})=U_M+V_M
\end{equation*}
where $rank(U_M)\leq N'$ and $\parallel V_M\parallel_2<\frac{\mu_0}{K\nu}\varepsilon$.
\label{lemma5.2}
\end{lemma}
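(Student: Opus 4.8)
The plan is to exhibit the claimed decomposition explicitly. Write $W_M := G_\beta - r(G_\beta)$, compute its entries, observe that the entries of non-negligible size are confined to the bottom-left and top-right corners, collect those two corner blocks into $U_M$ (which then has rank bounded independently of $M$), and leave the remainder in $V_M$, whose entries are all of the form $-\hat{g}_k^{(\beta)}$ with $|k|$ large; a triangular splitting of $V_M$ then turns the absolute summability of the symbol (the Wiener-class hypothesis, Lemma \ref{lemma5.1}) into a uniform-in-$M$ bound on $\|V_M\|_2$. This is the classical argument behind R. Chan's preconditioner, cf. \cite{Chan1996Conjugate}.

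Write $n = M-1$ for the order of the matrices. Using the definition of $r(G_\beta)$ from Section \ref{section4} together with the symmetry $\hat{g}_k^{(\beta)} = \hat{g}_{-k}^{(\beta)}$ of Lemma \ref{lemma3.1}, a direct evaluation of $\hat{g}_{i-j}^{(\beta)} - r_{i-j}$ gives: the main diagonal of $W_M$ vanishes, and for each $1 \le d \le n-1$ the $d$-th subdiagonal and the $d$-th superdiagonal of $W_M$ both equal $-\hat{g}_{n-d}^{(\beta)}$. Since $n-d$ is small precisely when $d$ is close to $n$, the appreciable entries of $W_M$ are exactly those with $|i-j|$ close to $n$, i.e. those in the two corners.

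Now fix an integer $N'$ (to be pinned down at the end). Let $U_M$ be the matrix that agrees with $W_M$ on the index set $\{(i,j): |i-j| \ge n-N'\}$ and vanishes elsewhere. That index set meets at most $N'$ rows and $N'$ columns inside the bottom-left corner and likewise inside the top-right corner, so $\operatorname{rank}(U_M) \le 2N'$; after renaming the bound (replace $N'$ by $2N'$ throughout) we may assume $\operatorname{rank}(U_M) \le N'$. Put $V_M := W_M - U_M$; by construction every nonzero entry of $V_M$ equals $-\hat{g}_k^{(\beta)}$ for some $|k| > N'$. Split $V_M = V_M^{\mathrm{L}} + V_M^{\mathrm{U}}$ into its strictly lower- and strictly upper-triangular parts. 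Each of these is a banded triangular Toeplitz matrix all of whose row sums and column sums are, in absolute value, at most $\sum_{|k|>N'}|\hat{g}_k^{(\beta)}|$; hence, by the interpolation inequality $\|\cdot\|_2 \le \sqrt{\|\cdot\|_1\,\|\cdot\|_\infty}$, we have $\|V_M^{\mathrm{L}}\|_2, \|V_M^{\mathrm{U}}\|_2 \le \sum_{|k|>N'}|\hat{g}_k^{(\beta)}|$, so $\|V_M\|_2 \le 2\sum_{|k|>N'}|\hat{g}_k^{(\beta)}|$, uniformly in $M$.

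Finally, Lemma \ref{lemma5.1} gives $\sum_{k=-\infty}^{\infty}|\hat{g}_k^{(\beta)}| = 2\hat{g}_0^{(\beta)} < \infty$, so the tail $\sum_{|k|>N'}|\hat{g}_k^{(\beta)}|$ tends to $0$ as $N' \to \infty$; choose $N'$ with $2\sum_{|k|>N'}|\hat{g}_k^{(\beta)}| < \frac{\mu_0}{K\nu}\varepsilon$ and take $M' = N'+1$. The step I expect to require the most care is the uniform-in-$M$ estimate $\|V_M\|_2 \le 2\sum_{|k|>N'}|\hat{g}_k^{(\beta)}|$: $V_M$ is neither Toeplitz nor circulant, so a crude bound would degrade with $M$; peeling it into its two triangular Toeplitz pieces and controlling each by its row/column sums is exactly where absolute summability of the generating function enters. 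The diagonal computation and the rank count are routine bookkeeping.
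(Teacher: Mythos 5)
The paper does not prove this lemma at all: it is imported verbatim (up to the rescaling of $\varepsilon$ by $\mu_0/(K\nu)$) from the cited survey of Chan and Ng, so there is no in-paper argument to compare against. Your proof is correct and is precisely the classical argument behind that citation: the diagonal computation $\left(G_\beta-r(G_\beta)\right)_{ij}=-\hat{g}^{(\beta)}_{\,n-|i-j|}$ for $i\neq j$ (with $n=M-1$) is right, the corner-supported part is indeed of rank at most $2N'$ since it touches only $N'$ columns on one side and $N'$ rows on the other, and the uniform-in-$M$ bound $\|V_M\|_2\le\sqrt{\|V_M\|_1\|V_M\|_\infty}\le 2\sum_{|k|>N'}|\hat{g}^{(\beta)}_k|$ combined with the absolute summability $\sum_k|\hat{g}^{(\beta)}_k|=2\hat{g}^{(\beta)}_0<\infty$ from Lemma \ref{lemma5.1} closes the argument. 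One cosmetic slip: your closing remark that ``$V_M$ is neither Toeplitz nor circulant'' is false --- $G_\beta-r(G_\beta)$ is Toeplitz, and zeroing out the outermost diagonals preserves the Toeplitz structure --- but this does not affect the proof, since your norm estimate only uses the row and column sums, not the structure.
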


\begin{theorem}
If $1<\beta\leq2$ and $\nu_{\beta}\leq\nu$, for any $\varepsilon>0$, there exist $M^*$ and $N^*>0$ such that, for any $M>M^*$,we have
\begin{equation*}
R^{-1}A-I=\hat{U}_M+\hat{V}_M,
\end{equation*}
where $rank(\hat{U}_M)\leq N^*$ and $\|\hat{V}_M\|_2 < \varepsilon$.
\label{th5.1}
\end{theorem}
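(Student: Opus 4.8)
The plan is to decompose the preconditioned matrix difference $R^{-1}A - I$ by first writing $R^{-1}A - I = R^{-1}(A - R)$. Using the definitions $A = \mu_0 I + K\nu_\beta G_\beta$ from \eqref{2.15} and $R = \mu_0 I + K\nu_\beta r(G_\beta)$ from \eqref{4.3}, the difference $A - R$ collapses to $K\nu_\beta\bigl(G_\beta - r(G_\beta)\bigr)$, so that
\begin{equation*}
R^{-1}A - I = K\nu_\beta R^{-1}\bigl(G_\beta - r(G_\beta)\bigr).
\end{equation*}
First I would verify that $p(\theta)$ is in the Wiener class, which is exactly Lemma \ref{lemma5.1}, so that Lemma \ref{lemma5.2} applies and yields the splitting $G_\beta - r(G_\beta) = U_M + V_M$ with $\operatorname{rank}(U_M) \le N'$ and $\|V_M\|_2 < \frac{\mu_0}{K\nu}\varepsilon$ for all $M > M'$.

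Next I would substitute this splitting to obtain
\begin{equation*}
R^{-1}A - I = K\nu_\beta R^{-1}U_M + K\nu_\beta R^{-1}V_M =: \hat{U}_M + \hat{V}_M.
\end{equation*}
For the low-rank term, $\operatorname{rank}(\hat{U}_M) = \operatorname{rank}(K\nu_\beta R^{-1}U_M) \le \operatorname{rank}(U_M) \le N'$, so we may take $N^* = N'$. For the small-norm term, I would bound
\begin{equation*}
\|\hat{V}_M\|_2 \le K\nu_\beta \,\|R^{-1}\|_2\, \|V_M\|_2 < K\nu \cdot \frac{1}{\mu_0} \cdot \frac{\mu_0}{K\nu}\varepsilon = \varepsilon,
\end{equation*}
where I invoke $\nu_\beta \le \nu$ from \eqref{5.1} and the bound $\|R^{-1}\|_2 < \frac{1}{\mu_0}$ established in Theorem \ref{th4.1}. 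Setting $M^* = \max\{M', M''\}$ where $M''$ is whatever threshold is needed for \eqref{5.1} to hold (or simply $M^* = M'$ if \eqref{5.1} is assumed for all $M$) completes the argument.

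I do not expect a serious obstacle here: the proof is essentially a bookkeeping exercise that chains together Lemma \ref{lemma5.1}, Lemma \ref{lemma5.2}, and Theorem \ref{th4.1}. The only point requiring mild care is making sure the $\varepsilon$ fed into Lemma \ref{lemma5.2} is the same $\varepsilon$ appearing in the conclusion — this works precisely because Lemma \ref{lemma5.2} was stated with the factor $\frac{\mu_0}{K\nu}$ absorbed into its bound on $\|V_M\|_2$, which is exactly calibrated to cancel against $K\nu\cdot\frac{1}{\mu_0}$. One should also note that $R^{-1}$ is well-defined since $R$ is SPD by Theorem \ref{th4.1}, so all the manipulations above are legitimate. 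The upshot is that $R^{-1}A$ is a small-norm perturbation of a low-rank perturbation of the identity, which by the standard Krylov-subspace convergence theory guarantees the superlinear convergence of the PCG method applied to \eqref{4.1}.
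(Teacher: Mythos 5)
Your proposal is correct and follows essentially the same route as the paper: write $R^{-1}A-I=K\nu_{\beta}R^{-1}\bigl(G_\beta-r(G_\beta)\bigr)$, invoke Lemma \ref{lemma5.1} so that Lemma \ref{lemma5.2} gives the low-rank plus small-norm splitting, and then bound $\|\hat{V}_M\|_2$ via $\|R^{-1}\|_2<1/\mu_0$ from Theorem \ref{th4.1} together with $\nu_\beta\leq\nu$. The only difference is cosmetic: you make explicit the choice $M^*=\max\{M',M''\}$ and the well-definedness of $R^{-1}$, points the paper leaves implicit.
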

\begin{proof}
According to Lemma \ref{lemma5.2}, we have
\begin{align*}
R^{-1}A-I&=R^{-1}(A-R)\\
&=K\nu_{\beta}R^{-1}[G_\beta-r(G_\beta)]\\
&=K\nu_{\beta}R^{-1}(U_M+V_M)\\
&=K\nu_{\beta}R^{-1}U_M+K\nu_{\beta}R^{-1}V_M,
\end{align*}
where
\begin{align*}
rank\left(K\nu_{\beta}R^{-1}U_M\right)&=rank\left(R^{-1}U_M\right)\\
&\leq rank(U_M)\\
&\leq N'.
\end{align*}
Therefore, let $\hat{U}_M=K\nu_{\beta}R^{-1}U_M$ and $N^*=N'$, we have
$rank(\hat{U}_M)\leq N^*.$

Let $\hat{V}_M=K\nu_{\beta}R^{-1}V_M$. By Theorem \ref{th4.1} and Lemma \ref{lemma5.2}, we get
\begin{align*}
\parallel \hat{V}_M\parallel_2
&\leq K\nu_{\beta}\parallel R^{-1}\parallel_2\cdot\parallel V_M\parallel_2\\
&< K\nu\frac{1}{\mu_0}\cdot\frac{\mu_0}{K\nu}\varepsilon\\
&=\varepsilon.
\end{align*}
This completes the proof.
\end{proof}

From Theorem \ref{th5.1}, it holds that the preconditioned matrix is the sum of an identity matrix, a matrix with small norm, and a matrix with low rank. Thus the spectrum of $R^{-1}A$ is clustered around 1.
Furthermore, since $\sigma_{\rm{min}}(A)>\mu_0$ is clearly satisfied, and according to the Theorem \ref{th4.1} and Eq. \eqref{5.1}, the smallest singular value of the matrix $R^{-1}A$ is
\begin{equation*}
\sigma_{\rm{min}}(R^{-1}A)\geq\frac{\sigma_{\rm{min}}(A)}{\parallel R\parallel_2}
>\frac{\mu_0}{\mu_0+2K\nu_{\beta}\hat{g}_0^{(\beta)}}
\geq\frac{\mu_0}{\mu_0+2K\nu\hat{g}_0^{(\beta)}}
>0,
\end{equation*}
which implies $\sigma_{\rm{min}}(R^{-1}A)$ is uniformly bounded away from zero. According to the Corollary 1.11 in \cite{chan2007introduction}, the superlinear convergence of the PCG method with R. Chan's circulant preconditioner is obtained.

\section{2D problem}\label{section6}
In this section, we consider the following 2D problem:
\begin{align}
\nonumber
&\int_1^2\omega(\alpha){}^C_0D_t^{\alpha}u(x,y,t)d\alpha=K_1\frac{\partial^{\beta}u(x,y,t)}{\partial\mid
x\mid^{\beta}}+K_2\frac{\partial^{\gamma}u(x,y,t)}{\partial\mid
y\mid^{\gamma}}+f(x,y,t),\\
&\qquad\qquad\qquad\qquad\qquad\qquad\qquad\qquad\quad (x,y)\in\Omega,~0<t\leq T,&\label{6.1}\\
&u(x,y,0)=0,\quad u_t(x,y,0)=0,\quad (x,y)\in\Omega,\label{6.2}&\\
&u(x,y,t)=0,\quad (x,y)\in\partial\Omega,\quad 0\leq t\leq T,&\label{6.3}
\end{align}
where $K_1,K_2>0$, $\gamma\in(1,2]$, $\Omega=[0,L_1]\times[0,L_2]$, $\partial\Omega$ is the boundary of $\Omega$, and $f(x,y,t)$ is a given function.

\subsection{The derivation of the difference scheme}\label{section6.1}
The numerical discrete methods applied to the 1D problem \eqref{1.1}-\eqref{1.3} can be directly extended to deal with the 2D problem \eqref{6.1}-\eqref{6.3}.

To derive the difference scheme of \eqref{6.1}-\eqref{6.3}, we first divide the interval $[0,L_1]$ into $M_1$-subintervals with $h_1=\frac{L_1}{M_1}$ and $x_i=ih_1~(0\leq i\leq M_1)$, and divide the interval $[0,L_2]$ into $M_2$-subintervals with $h_2=\frac{L_2}{M_2}$ and $y_j=jh_2~(0\leq j\leq M_2)$.

Let $\omega=\{(i,j)~|1\leq i\leq M_1-1,~1\leq j\leq M_2-1\}$, $\partial\omega=\{(i,j)~|~(x_i,y_j)\in\partial\Omega\}$, $\bar{\omega}=\omega\bigcup\partial\omega$. Suppose $u(x,y,t)\in C^{(5,5,3)}(\bar{\Omega}\times[0,T])$ and consider Eq. \eqref{6.1} at the point $(x_i,y_j,t_n)$, we have
\begin{equation*}
\int_1^2\omega(\alpha){}^C_0D_t^{\alpha}u(x_i,y_j,t_n)d\alpha=K_1\frac{\partial^{\beta}u(x_i,y_j,t_n)}{\partial\mid
x\mid^{\beta}}+K_2\frac{\partial^{\gamma}u(x_i,y_j,t_n)}{\partial\mid
y\mid^{\gamma}}+f(x_i,y_j,t_n),
\end{equation*}
where $(i,j)\in\omega,~0\leq n\leq N$. Taking an average of the above equality on time levels $t=t_n$ and $t=t_{n-1}$, it follows
\begin{align}
\nonumber
&\frac{1}{2}\left(\int_1^2\omega(\alpha){}^C_0D_t^{\alpha}u(x_i,y_j,t_n)d\alpha+
\int_1^2\omega(\alpha){}^C_0D_t^{\alpha}u(x_i,y_j,t_{n-1})d\alpha\right)\\
\nonumber
=&\frac{K_1}{2}\left(\frac{\partial^{\beta}u(x_i,y_j,t_n)}{\partial\mid
x\mid^{\beta}}+\frac{\partial^{\beta}u(x_i,y_j,t_{n-1})}{\partial\mid
x\mid^{\beta}}\right)+
\frac{K_2}{2}\left(\frac{\partial^{\gamma}u(x_i,y_j,t_n)}{\partial\mid
y\mid^{\gamma}}+\frac{\partial^{\gamma}u(x_i,y_j,t_{n-1})}{\partial\mid
y\mid^{\gamma}}\right)\\
&+\frac{1}{2}\left(f(x_i,y_j,t_n)+f(x_i,y_j,t_{n-1})\right),\quad (i,j)\in\omega,~1\leq n\leq N.\label{6.4}
\end{align}

Let $U_{ij}^n=u(x_i,y_j,t_n),~F_{ij}^n=f(x_i,y_j,t_n),~(i,j)\in\bar{\omega},~0\leq n\leq N$, then Eq. \eqref{6.4} can be expressed as
\begin{align}
\int_1^2\omega(\alpha){}^C_0D_t^{\alpha}U_{ij}^{n-\frac{1}{2}}d\alpha
=K_1\frac{\partial^{\beta}U_{ij}^{n-\frac{1}{2}}}{\partial\mid x\mid^{\beta}}
+K_2\frac{\partial^{\gamma}U_{ij}^{n-\frac{1}{2}}}{\partial\mid y\mid^{\gamma}}
+F_{ij}^{n-\frac{1}{2}},~(i,j)\in\omega,~1\leq n\leq N.\label{6.5}
\end{align}

Using Lemma $\ref{lemma2.1}$, we get
\begin{equation}\label{6.6}
\int_1^2\omega(\alpha){}^C_0D_t^{\alpha}U_{ij}^{n-\frac{1}{2}}d\alpha
=\Delta\alpha\sum\limits_{l=0}^{2J}c_l\omega(\alpha_l){}^C_0D_t^{\alpha_l}U_{ij}^{n-\frac{1}{2}}
+\mathcal{O}(\Delta\alpha^2).
\end{equation}

According to the fully discrete difference scheme $(2.7)$ in \cite{Gao2017Two} and noticing the zero initial condition $\eqref{6.2}$, we have
\begin{align}
\Delta\alpha\sum\limits_{l=0}^{2J}c_l\omega(\alpha_l){}^C_0D_t^{\alpha_l}U_{ij}^{n-\frac{1}{2}}
=\Delta\alpha\sum\limits_{l=0}^{2J}c_l\omega(\alpha_l)\frac{1}{\tau^{\gamma_l}}\sum\limits_{k=0}^{n-1}
\lambda_k^{(\gamma_l)}\delta_tU_{ij}^{n-k-\frac{1}{2}}+\mathcal{O}(\tau^2). \label{6.7}
\end{align}

Moreover, by the Lemma $\ref{lemma2.2}$ it is easy to know that
\begin{equation}
\frac{\partial^{\beta}U_{ij}^{n-\frac{1}{2}}}{\partial\mid x\mid^{\beta}}=
-h_1^{-\beta}\sum\limits_{k=i-M_1}^i\hat{g}_k^{(\beta)}U_{i-k,j}^{n-\frac{1}{2}}+\mathcal{O}(h_1^2). \label{6.8}
\end{equation}
\begin{equation}
\frac{\partial^{\gamma}U_{ij}^{n-\frac{1}{2}}}{\partial\mid y\mid^{\gamma}}=
-h_2^{-\gamma}\sum\limits_{k=j-M_2}^j\hat{g}_k^{(\gamma)}U_{i,j-k}^{n-\frac{1}{2}}+\mathcal{O}(h_2^2). \label{6.9}
\end{equation}

By substituting $\eqref{6.6}$-$\eqref{6.9}$ into $\eqref{6.5}$, we have
\begin{align}
\nonumber
&\Delta\alpha\sum\limits_{l=0}^{2J}c_l\omega(\alpha_l)\frac{1}{\tau^{\gamma_l}}\sum\limits_{k=0}^{n-1}
\lambda_k^{(\gamma_l)}\delta_tU_{ij}^{n-k-\frac{1}{2}}\\
=&-K_1h_1^{-\beta}\sum\limits_{k=i-M_1}^i\hat{g}_k^{(\beta)}U_{i-k,j}^{n-\frac{1}{2}}
-K_2h_2^{-\gamma}\sum\limits_{k=j-M_2}^j\hat{g}_k^{(\gamma)}U_{i,j-k}^{n-\frac{1}{2}}
+F_{ij}^{n-\frac{1}{2}}+p_{ij}^n,\label{6.10}
\end{align}
where $p_{ij}^n=\mathcal{O}(\Delta\alpha^2+\tau^2+h_1^2+h_2^2)$, $(i,j)\in\omega,~1\leq n\leq N$.

Omitting the small term $p_{ij}^n$ in \eqref{6.10}, replacing $U_{ij}^k$ with its numerical one $u_{ij}^k$ and noticing the initial and boundary value conditions \eqref{6.2}-\eqref{6.3}, we can construct the numerical scheme of $\eqref{6.1}$-$\eqref{6.3}$ as follows:
\begin{align}
\nonumber
&\Delta\alpha\sum\limits_{l=0}^{2J}c_l\omega(\alpha_l)\frac{1}{\tau^{\gamma_l}}\sum\limits_{k=0}^{n-1}
\lambda_k^{(\gamma_l)}\delta_tu_{ij}^{n-k-\frac{1}{2}}
=-K_1h_1^{-\beta}\sum\limits_{k=i-M_1}^i\hat{g}_k^{(\beta)}u_{i-k,j}^{n-\frac{1}{2}}
-K_2h_2^{-\gamma}\sum\limits_{k=j-M_2}^j\hat{g}_k^{(\gamma)}u_{i,j-k}^{n-\frac{1}{2}}\\
&\qquad\qquad\qquad\qquad\qquad\qquad\qquad\quad\qquad +F_{ij}^{n-\frac{1}{2}},\quad (i,j)\in\omega,~1\leq n\leq N,\label{6.11}\\
&u_{ij}^0=0,\quad (i,j)\in\omega,\label{6.12}\\
&u_{ij}^n=0,\quad (i,j)\in\partial\omega,~0\leq n\leq N.\label{6.13}
\end{align}

Similar to the Section \ref{section3}, the difference scheme \eqref{6.11}-\eqref{6.13} can be proved to be uniquely solvable, unconditionally stable and convergent with the order of $\mathcal{O}(\Delta\alpha^2+\tau^2+h_1^2+h_2^2)$.

Let
$$u^n=(u_{1,1}^n,\cdots,u_{M_1-1,1}^n,u_{1,2}^n,\cdots,u_{M_1-1,2}^n,u_{1,M_2-1}^n,\cdots,u_{M_1-1,M_2-1}^n)^T,$$
$$f^n=(F_{1,1}^n,\cdots,F_{M_1-1,1}^n,F_{1,2}^n,\cdots,F_{M_1-1,2}^n,F_{1,M_2-1}^n,\cdots,F_{M_1-1,M_2-1}^n)^T.$$

The following matrix form for difference scheme \eqref{6.11} can get
\begin{equation}\label{6.14}
 A_2u^n=p^{n-1},\quad n=1,2,\ldots,N,
\end{equation}
where
 \begin{equation}\label{6.15}
 A_2=\mu_0I_1+I_2\otimes (K_1\nu_{\beta}G_\beta)+(K_2\nu_{\gamma}G_\gamma)\otimes I_3,
 \end{equation}
 and
 \begin{equation*}
 p^{n-1}=-[I_2\otimes (K_1\nu_{\beta}G_\beta)+(K_2\nu_{\gamma}G_\gamma)\otimes I_3] u^{n-1}
 +\sum\limits_{k=1}^{n-1}(\mu_{k-1}-\mu_k)u^{n-k}
 +\frac{\tau}{2}(f^n+f^{n-1}),
 \end{equation*}
in which $\otimes$ denotes the Kronecker product, $I_1$, $I_2$ and $I_3$ are identity matrices of order $(M_1-1)(M_2-1)$, $M_2-1$ and $M_1-1$, respectively, and the definitions of $\nu_{\gamma}$ and $G_\gamma$ are similar to $\nu_{\beta}$ and $G_\beta$, respectively.

\subsection{GL-PCG method with truncated preconditioner}\label{section6.2}
We remark that the PCG-based GSF method cannot be directly applied to handle the linear systems \eqref{6.14} since $A_2$ is a BTTB matrix and not a Toeplitz matrix. Therefore, in this subsection, we seek other effective algorithms to efficiently solve linear systems \eqref{6.14}.

Let $U^n=\rm{reshape}$$(u^n,M_1-1,M_2-1)$ and $F^n=\rm{reshape}$$(f^n,M_1-1,M_2-1)$.
It is easy to verify that the Kronecker equations \eqref{6.14} are equivalent to the following Sylvester matrix equations

\begin{equation}\label{6.16}
(\mu_0I_3+K_1\nu_{\beta}G_\beta){U}^n+{U}^n(K_2\nu_{\gamma}G_\gamma)={E}^{n-1},
\end{equation}
where
\begin{equation*}
{E}^{n-1}=-[(K_1\nu_{\beta}G_\beta){U}^{n-1}+{U}^{n-1}(K_2\nu_{\gamma}G_\gamma)]
+\sum\limits_{k=1}^{n-1}(\mu_{k-1}-\mu_k){U}^{n-k}
+\frac{\tau}{2}({F}^n+{F}^{n-1}).
\end{equation*}

\begin{proposition}
Let $\mathcal{L}$ be a linear operator as follows:
$$\mathcal{L}({U})=(\mu_0I_3+K_1\nu_{\beta}G_\beta){U}+{U}(K_2\nu_{\gamma}G_\gamma).$$
Then the linear operator $\mathcal{L}$ is SPD.
\label{proposition1}
\end{proposition}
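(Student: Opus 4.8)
The plan is to show that the linear operator $\mathcal{L}$ acting on the space of $(M_1-1)\times(M_2-1)$ real matrices, equipped with the Frobenius (trace) inner product $\langle U,V\rangle = \mathrm{tr}(U^TV)$, is both self-adjoint and positive definite. The natural first step is to recall that $G_\beta$ and $G_\gamma$ are real symmetric positive definite matrices: symmetry follows from Lemma~\ref{lemma3.1} (since $\hat g_{-k}^{(\beta)}=\hat g_k^{(\beta)}$), and positive definiteness was already established in the proof of Lemma~\ref{lemma3.4}. Likewise $\mu_0>0$ by Lemma~\ref{lemma3.2}, and $K_1,K_2,\nu_\beta,\nu_\gamma>0$. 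Hence $P:=\mu_0 I_3 + K_1\nu_\beta G_\beta$ and $Q:=K_2\nu_\gamma G_\gamma$ are both real SPD matrices, and $\mathcal{L}(U)=PU+UQ$.

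Next I would verify self-adjointness: for any matrices $U,V$,
\begin{align*}
\langle \mathcal{L}(U),V\rangle &= \mathrm{tr}\big((PU+UQ)^TV\big) = \mathrm{tr}(U^TP^TV)+\mathrm{tr}(Q^TU^TV)\\
&= \mathrm{tr}(U^TPV)+\mathrm{tr}(U^TVQ) = \langle U,\mathcal{L}(V)\rangle,
\end{align*}
using $P^T=P$, $Q^T=Q$ and the cyclic property of the trace. Then for positive definiteness I would compute, for $U\neq 0$,
\begin{align*}
\langle \mathcal{L}(U),U\rangle = \mathrm{tr}(U^TPU)+\mathrm{tr}(U^TUQ) = \mathrm{tr}(U^TPU)+\mathrm{tr}(UQU^T).
\end{align*}
Each term is a sum of quadratic forms of an SPD matrix against the (nonzero) columns of $U$ (resp.\ $U^T$), hence nonnegative, and since $U\neq 0$ at least one column is nonzero so at least one term is strictly positive; therefore $\langle \mathcal{L}(U),U\rangle>0$. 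An alternative, equivalent route is to pass to the Kronecker form: $\mathrm{vec}(\mathcal{L}(U)) = \big(I_2\otimes P + Q^T\otimes I_3\big)\mathrm{vec}(U) = A_2\,\mathrm{vec}(U)$, and observe that $A_2$ in~\eqref{6.15} is a sum of $\mu_0 I_1$ and two Kronecker products of SPD matrices with identities, each of which is symmetric positive semidefinite, so $A_2$ is SPD; this is essentially the argument already used for $A$ in Theorem~\ref{th3.1}.

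I do not anticipate a genuine obstacle here; the statement is elementary once the SPD properties of $G_\beta$ and $G_\gamma$ are in hand. The only point requiring a little care is making the positive-definiteness conclusion airtight — specifically, arguing that $\mathrm{tr}(U^TPU)+\mathrm{tr}(UQU^T)=0$ forces $U=0$, which follows because both traces are nonnegative and $\mathrm{tr}(U^TPU)=0$ already implies $PU=0$, hence $U=0$ by invertibility of $P$. I would present the Kronecker-form argument as the cleanest version, since it directly reuses the reasoning behind Theorem~\ref{th3.1} and avoids any ambiguity about what ``SPD linear operator'' means.
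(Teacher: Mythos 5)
Your proof is correct. It is worth noting, however, that the paper does not actually prove this proposition: its ``proof'' consists of a single sentence deferring to Proposition 1 of the cited reference on the global CG method \cite{Jafarbigloo2013Global}. What you have written is a complete, self-contained verification of exactly the fact being cited, and both of your routes are sound. The Frobenius-inner-product computation correctly establishes self-adjointness of $\mathcal{L}(U)=PU+UQ$ from $P^T=P$, $Q^T=Q$ and cyclicity of the trace, and the positivity argument via $\mathrm{tr}(U^TPU)+\mathrm{tr}(UQU^T)$ is airtight (each trace is a sum of SPD quadratic forms over the columns of $U$, resp.\ $U^T$, and a nonzero $U$ has a nonzero column, so the first term is already strictly positive). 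The Kronecker-form alternative, $\mathrm{vec}(\mathcal{L}(U))=A_2\,\mathrm{vec}(U)$ with $A_2$ as in \eqref{6.15}, is indeed the cleanest presentation because it reduces the claim to the symmetry and positive definiteness of $A_2$, which follows by the same reasoning as Theorem \ref{th3.1}, and it makes precise in what sense the operator $\mathcal{L}$ is ``SPD.'' The only prerequisite either version relies on is that $G_\beta$ and $G_\gamma$ are real SPD and $\mu_0>0$, which you correctly source from Lemmas \ref{lemma3.1}, \ref{lemma3.2} and \ref{lemma3.4}. In short: your argument supplies the content the paper outsources, and either of your two formulations could be inserted in place of the citation.
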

\begin{proof}
The proof can be referred to the conclusion in \cite[Proposition 1]{Jafarbigloo2013Global}. 
\end{proof}

Since $\mathcal{L}$ is SPD, the GL-CG method \cite{Jafarbigloo2013Global} is a good choice for solving the Sylvester matrix equations \eqref{6.16}. However, the GL-CG method usually converges very slowly because the resulting matrix equations are often ill-conditioned.
Therefore, we will develop the preconditioned GL-CG (GL-PCG) method to accelerate the convergence rate.
More specifically, the GL-CG method is employed to solve its equivalent preconditioned linear matrix equation
$$\mathcal{\tilde{L}}^{-1}(\mathcal{L}({U}))=\mathcal{\tilde{L}}^{-1}({E}),$$
where $\mathcal{\tilde{L}}$ is a preconditioner operator. The algorithm of the GL-PCG method is given as follows.
\begin{algorithm}[H]
\caption{GL-PCG for $\mathcal{L}({U})={E}$ with a preconditioner operator $\mathcal{\tilde{L}}$}
\label{alg4}
\begin{algorithmic}[1]
\small
\STATE {Given initial ${U}_0$}
\STATE {Compute ${R}_0={E}-\mathcal{L}({U_0})$}
\STATE {${Z}_0=\mathcal{\tilde{L}}^{-1}({R}_0)$, ${P}_0={Z}_0$}
\STATE{For $j=0,1,2,\cdot\cdot\cdot$, until convergence Do}
\STATE{$\alpha_j=\frac{<{R}_j,{Z}_j>_F}{<\mathcal{L}({P}_j),{P}_j>_F}$}
\STATE {${U}_{j+1}={U}_j+\alpha_j {P}_j$}
\STATE {${R}_{j+1}={R}_j-\alpha_j\mathcal{L}({P}_j)$}
\STATE {${Z}_{j+1}=\mathcal{\tilde{L}}^{-1}({R}_{j+1}$)}
\STATE {$\beta_j=\frac{<{Z}_{j+1},{R}_{j+1}>_F}{<{Z}_j,{R}_j>_F}$}
\STATE {${P}_{j+1}={Z}_{j+1}+\beta_j{P}_j$}
\STATE {End Do}
\end{algorithmic}
\end{algorithm}

In Step 5 of the GL-GCG algorithm, the inner product $<\cdot,\cdot>_F$ is defined as
$$<{R}_j,{R}_j>_F=\rm{tr}({R}_j^T{R}_j)=\sum\limits_{i=1}^{M_2-1}\mathbf{r}_{j_i}^T\mathbf{r}_{j_i},$$
where $\mathbf{r}_{j_i}$ is the $i$-th column of ${R}_j$.

According to the well-known fact that a preconditioner with the same structure of the preconditioned matrix is optimal \cite{Serra1994Preconditioning}. We now design a truncated preconditioner $\mathcal{\tilde{L}}_l$ which has the same structure of $\mathcal{L}$ as follows:
\begin{equation}\label{6.17}
\mathcal{\tilde{L}}_l({Z})=[\mu_0I_3+K_1\nu_{\beta}T_l(G_\beta)]{Z}+{Z}[K_2\nu_{\gamma}T_l(G_\gamma)],
\end{equation}
where $T_l(G_\beta)$ is a $l$-truncation for matrix $G_\beta$ ($1\leq l\leq M_1-1$). More precisely,
\begin{equation*}
T_l(G_\beta)=
\begin{bmatrix}
{\hat{g}}_0^{(\beta)}&\cdots&{\hat{g}}_{-l}^{(\beta)}&0&\cdots&0\\
\vdots&{\hat{g}}_{0}^{(\beta)}&\ddots&{\hat{g}}_{-l}^{(\beta)}&\ddots&\vdots\\
{\hat{g}}_l^{(\beta)}&\ddots&{\hat{g}}_{0}^{(\beta)}&\ddots&\ddots&0\\
0&\ddots&\ddots&\ddots&\ddots&{\hat{g}}_{-l}^{(\beta)}\\
\vdots&\ddots&{\hat{g}}_{l}^{(\beta)}&\ddots&{\hat{g}}_{0}^{(\beta)}&\vdots\\
0&\cdots&0&{\hat{g}}_{l}^{(\beta)}&\cdots&{\hat{g}}_{0}^{(\beta)}
\end{bmatrix}.
\end{equation*}

The invertibility of $\mathcal{\tilde{L}}_l$ is guaranteed by the following lemma.
\begin{lemma}
For $1\leq l\leq \min\{M_1-1,M_2-1\}$, the preconditioner operator $\mathcal{\tilde{L}}_l$ is SPD.
\label{lemma6.1}
\end{lemma}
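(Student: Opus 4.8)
The plan is to show that the operator $\mathcal{\tilde{L}}_l$ is symmetric and positive definite with respect to the Frobenius inner product $\langle\cdot,\cdot\rangle_F$, by reducing it to a statement about the two banded symmetric matrices $\mu_0 I_3 + K_1\nu_\beta T_l(G_\beta)$ and $K_2\nu_\gamma T_l(G_\gamma)$ appearing in \eqref{6.17}. The key observation is that $\mathcal{\tilde{L}}_l$ has exactly the same algebraic form as $\mathcal{L}$ in Proposition \ref{proposition1}, namely $Z \mapsto P Z + Z Q$ with $P = \mu_0 I_3 + K_1\nu_\beta T_l(G_\beta)$ and $Q = K_2\nu_\gamma T_l(G_\gamma)$; hence, once we verify that $P$ is SPD and $Q$ is symmetric positive semidefinite (indeed SPD), the SPD property of $\mathcal{\tilde{L}}_l$ follows by the very same argument used in \cite[Proposition 1]{Jafarbigloo2013Global}, which I would invoke rather than repeat.

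First I would establish symmetry: since $T_l(G_\beta)$ and $T_l(G_\gamma)$ are obtained from the symmetric Toeplitz matrices $G_\beta$ and $G_\gamma$ (symmetric by Lemma \ref{lemma3.1}) simply by zeroing out the entries beyond the $l$-th sub/super-diagonal, they remain symmetric; consequently $P$ and $Q$ are symmetric, and for any matrices $Z,W$ one has $\langle \mathcal{\tilde{L}}_l(Z),W\rangle_F = \mathrm{tr}(W^T(PZ+ZQ)) = \mathrm{tr}((PW)^TZ) + \mathrm{tr}((WQ)^TZ) = \langle Z,\mathcal{\tilde{L}}_l(W)\rangle_F$, so $\mathcal{\tilde{L}}_l$ is self-adjoint. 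Next I would establish positive definiteness of $P$ and $Q$. The truncation $T_l(G_\beta)$ is the symmetric banded matrix whose $0$-th diagonal is $\hat g_0^{(\beta)}$ and whose $k$-th diagonal ($1\le k\le l$) is $\hat g_k^{(\beta)}\le 0$; by Lemma \ref{lemma3.1} the row sums satisfy $\hat g_0^{(\beta)} + \sum_{1\le |k|\le l}\hat g_k^{(\beta)} \ge \hat g_0^{(\beta)} + \sum_{k\ne 0}\hat g_k^{(\beta)} = \hat g_0^{(\beta)} - \hat g_0^{(\beta)}\cdot\text{(something)}$; more precisely, since $-\sum_{k\neq 0}\hat g_k^{(\beta)} = \hat g_0^{(\beta)}$ and the partial sums $-\sum_{1\le|k|\le l}\hat g_k^{(\beta)}$ increase to $\hat g_0^{(\beta)}$ but never reach it, each Gershgorin disc of $T_l(G_\beta)$ is centered at $\hat g_0^{(\beta)}>0$ with radius strictly less than $\hat g_0^{(\beta)}$, so $T_l(G_\beta)$ is (at least) positive semidefinite; the constraint $l\le M_2-1$ (together with $l\le M_1-1$) ensures the band structure genuinely fits inside the matrix dimension, and in fact one checks $T_l(G_\beta)$ is a weakly diagonally dominant symmetric matrix with positive diagonal, hence positive semidefinite, so $P = \mu_0 I_3 + K_1\nu_\beta T_l(G_\beta)$ is SPD because $\mu_0>0$ (Lemma \ref{lemma3.2}) and $K_1,\nu_\beta>0$. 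The identical reasoning applied with $\gamma$ in place of $\beta$ shows $T_l(G_\gamma)$ is symmetric positive semidefinite, so $Q = K_2\nu_\gamma T_l(G_\gamma)$ is symmetric positive semidefinite.

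Finally I would assemble these pieces: writing $Q = S^2$ for a symmetric positive semidefinite square root $S$ (or diagonalizing $Q = V\Lambda V^T$ with $\Lambda\ge 0$ and $V$ orthogonal), one computes for $Z\ne 0$ that $\langle \mathcal{\tilde{L}}_l(Z),Z\rangle_F = \mathrm{tr}(Z^T P Z) + \mathrm{tr}(Z^T Z Q) = \mathrm{tr}(Z^T P Z) + \mathrm{tr}((ZV)^T(ZV)\Lambda) \ge \mathrm{tr}(Z^T P Z) > 0$, where the first trace is strictly positive because $P$ is SPD and $Z\ne 0$ forces $ZZ^T\ne 0$ so $\mathrm{tr}(Z^TPZ)=\sum_i z_i^T P z_i > 0$ over the nonzero columns $z_i$ of $Z$, and the second trace is nonnegative since $\Lambda\ge 0$. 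This gives $\langle\mathcal{\tilde{L}}_l(Z),Z\rangle_F>0$ for all $Z\ne 0$, i.e. $\mathcal{\tilde{L}}_l$ is positive definite, and combined with the self-adjointness above this proves $\mathcal{\tilde{L}}_l$ is SPD, and in particular invertible so that Step 8 of Algorithm \ref{alg4} is well defined.

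I expect the main obstacle to be the positive (semi)definiteness of the truncated matrices $T_l(G_\beta)$ and $T_l(G_\gamma)$: unlike the full matrix $G_\beta$, the truncation is no longer the Toeplitz matrix of a nonnegative generating function, so one cannot appeal directly to the generating-function / spectral argument used earlier. The cleanest route is the Gershgorin/diagonal-dominance argument sketched above, which only uses the sign pattern $\hat g_0^{(\beta)}>0$, $\hat g_k^{(\beta)}\le 0$ for $k\ge 1$, and the summability $\sum_{k\neq 0}\hat g_k^{(\beta)} = -\hat g_0^{(\beta)}$ from Lemma \ref{lemma3.1}; care is only needed to confirm that truncation can only \emph{shrink} the off-diagonal mass in each row (since all discarded entries have the same sign), so weak diagonal dominance — hence positive semidefiniteness — is preserved, and the role of the hypothesis $l\le\min\{M_1-1,M_2-1\}$ is simply to make the banded pattern meaningful in both matrix sizes.
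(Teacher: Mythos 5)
Your proof is correct and follows essentially the same route as the paper's: the paper expresses $\mathcal{\tilde{L}}_l$ in the Kronecker form $K_l=\mu_0I_1+I_2\otimes K_1\nu_{\beta}T_l(G_\beta)+K_2\nu_{\gamma}T_l(G_\gamma)\otimes I_3$, asserts that $K_l$ is SPD, and cites \cite[Proposition 2]{Jafarbigloo2013Global} to transfer the property to the operator. Your write-up additionally supplies the one nontrivial ingredient the paper dismisses as ``easy to prove'' --- namely that truncation preserves the weak diagonal dominance of $G_\beta$ because all discarded off-diagonal entries $\hat{g}_k^{(\beta)}$ share the same sign, so $T_l(G_\beta)$ and $T_l(G_\gamma)$ stay positive semidefinite and $\mu_0>0$ then forces the whole operator to be SPD.
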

\begin{proof}

The truncated preconditioner $\mathcal{\tilde{L}}_l$ can be expressed in the following Kronecker form
\begin{equation*}
 {K}_l=\mu_0I_1+I_2\otimes K_1\nu_{\beta}T_l(G_\beta)+K_2\nu_{\gamma}T_l(G_\gamma)\otimes I_3.
 \end{equation*}
It is easy to prove that ${K}_l$ is SPD. From \cite[Proposition 2]{Jafarbigloo2013Global},
we get that $\mathcal{\tilde{L}}_l$ is also SPD.
\end{proof}

We investigate numerically the eigenvalue distributions of the proposed preconditioned matrix ${K}_l^{-1}A_2$ , where $l=\frac{M}{2}$ (see Figs. \ref{fig6}-\ref{fig7}). It is obvious that the spectrum of the preconditioned matrix is clustered around 1.

Two major computational loads in Algorithm \ref{alg4} are $\mathcal{\tilde{L}}_l^{-1}({R})$ and $\mathcal{L}({P})$. The $\mathcal{\tilde{L}}_l^{-1}({R})$ can be calculated by solving the Sylvester equation $\mathcal{\tilde{L}}_l({Z})={R}$.
Moreover, we observe that
\begin{equation*}
\mathcal{L}({P})=\mu_0{P}+(K_1\nu_{\beta}G_\beta){P}+{P}(K_2\nu_{\gamma}G_\gamma)
=\mu_0{P}+K_1\nu_{\beta}(G_\beta{P})+K_2\nu_{\gamma}(G_\gamma{P}^T)^T,
\end{equation*}
which implies that the computation of $\mathcal{L}({P})$ includes two Toeplitz matrix multiplications, $G_\beta{P}$ and $G_\gamma{P}^T$. Therefore, it can be obtained by FFT2 with only $\mathcal{O}(M_1M_2\log(M_1M_2))$.

\begin{figure}[!hbt]
  \centering
  \subfigure{
    \includegraphics[width = 6 cm]{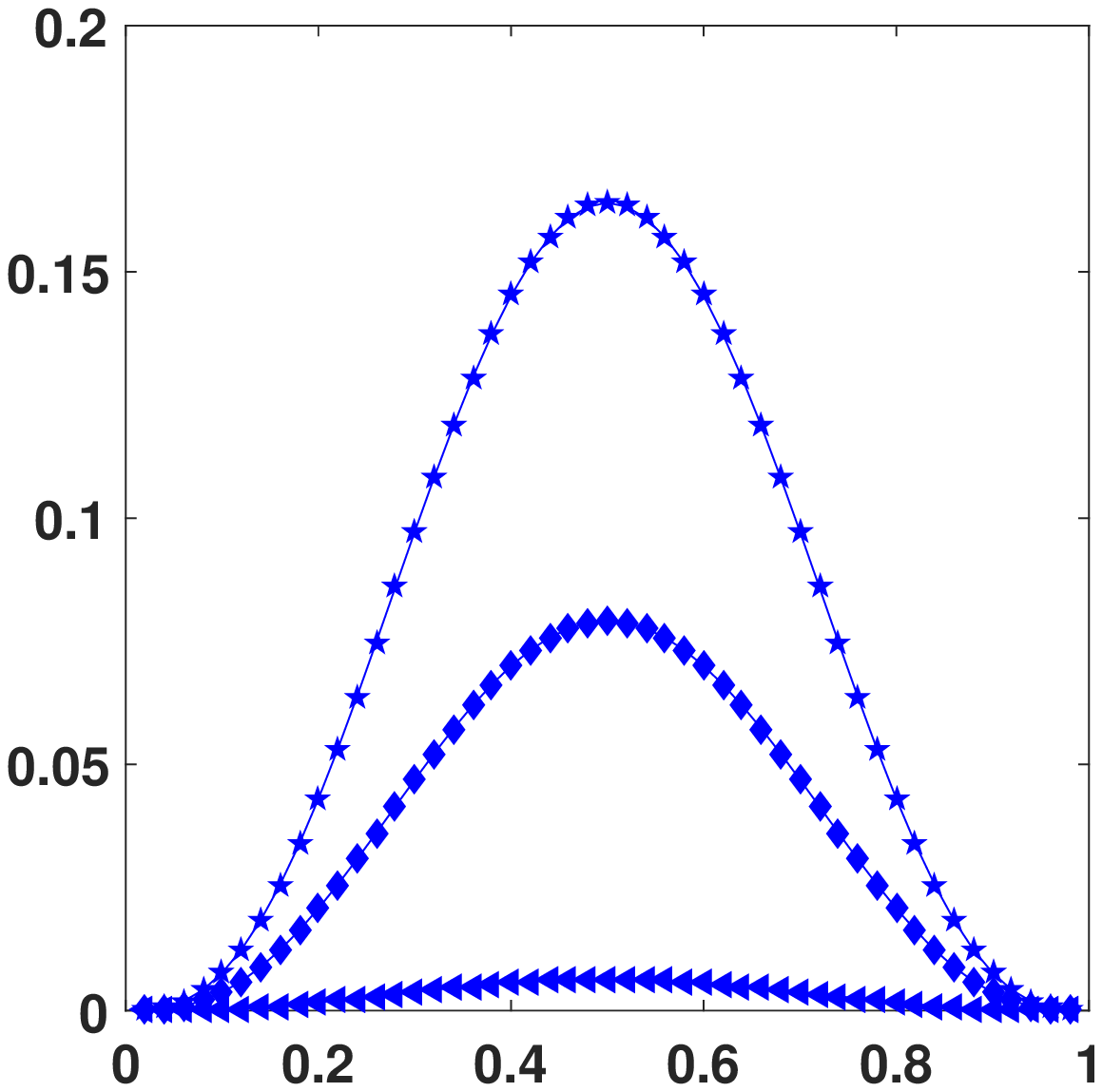}}
     \hspace{1 cm}
  \subfigure{
    \includegraphics[width = 6 cm]{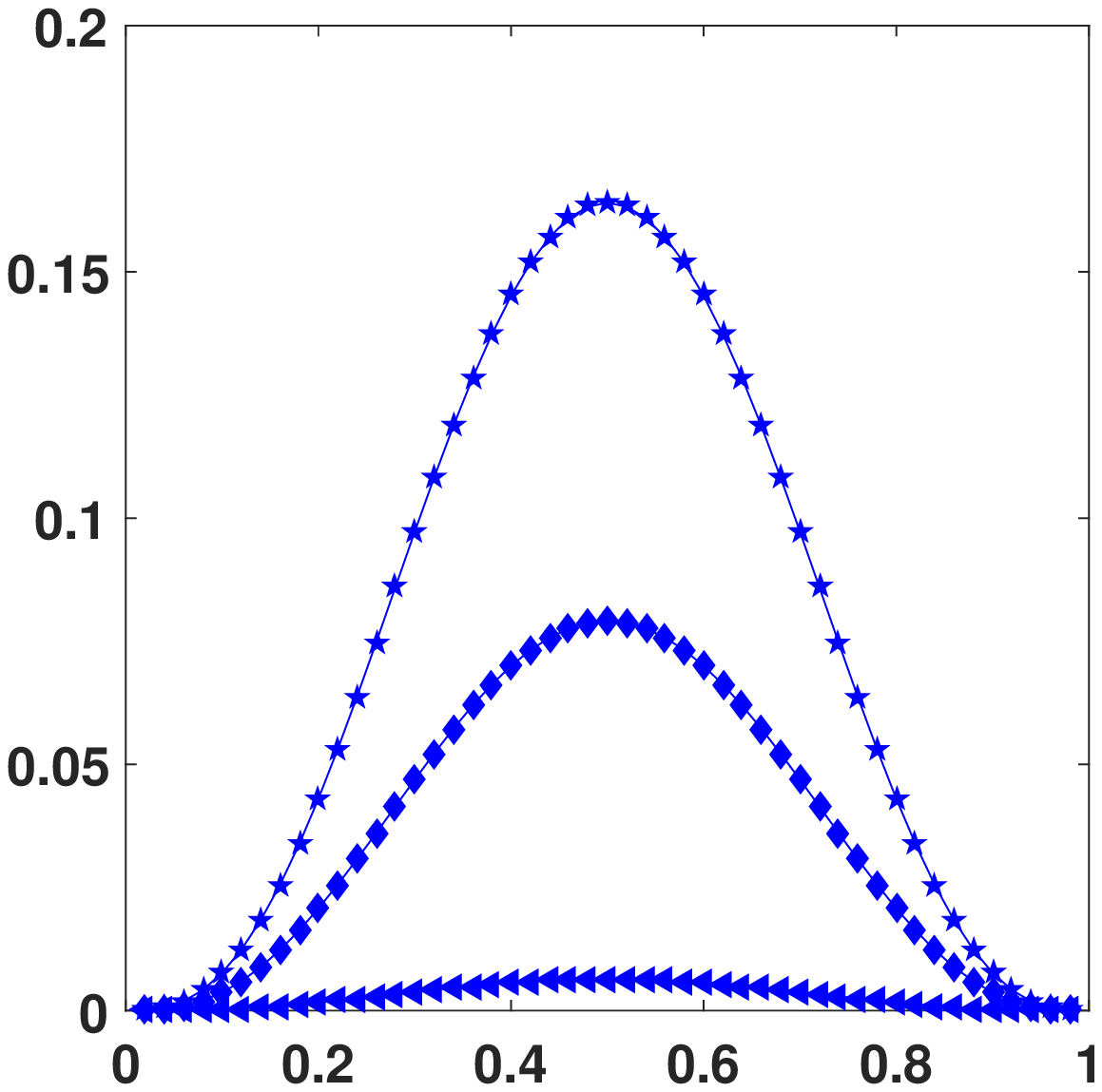}}\\
    (\textbf{a}) $\beta$ = 1.3 \hspace{5cm} (\textbf{b}) $\beta$ = 1.8 \\
  \caption{Exact (lines) and numerical (symbols) solutions of the scheme \eqref{2.11}-\eqref{2.13} for Example \ref{example1}: (\textbf{a}) $\beta$ = 1.3 at $T$ = 1.8 (stars), 1.5 (rhombus), 0.8 (triangles); (\textbf{b}) $\beta$ = 1.8 at $T$ = 1.8 (stars), 1.5 (rhombus), 0.8 (triangles), where $J=M=N=50$.}
  \label{fig1}
\end{figure}

\section{Numerical results}\label{section7}
Some numerical experiments are carried out in this section to verify the effectiveness of the proposed difference schemes and the performances of the fast solution techniques. All numerical experiments are implemented using MATLAB R2016a on a desktop with 16GB RAM, Inter (R) Core (TM) i7-8700K CPU @3.70GHz.

\begin{table}[t]\small\tabcolsep=5.3pt
\begin{center}
\caption{{\small {The errors and space convergence orders of the difference scheme \eqref{2.11}-\eqref{2.13} for Example \ref{example1} with $T$ = 1.5, $J$ = 50, $N$ = 2000 and different $\beta$.}}}
\begin{tabular}{ccccccc}
\hline  & \multicolumn{2}{c}{$\beta=1.2$} & \multicolumn{2}
{c}{$\beta=1.5$} & \multicolumn{2}{c}{$\beta=1.8$} \\
[-2pt] \cmidrule(lr){2-3} \cmidrule(lr){4-5} \cmidrule(lr){6-7} \\ [-11pt]
 $M$ & $e(h,\tau,\Delta\alpha)$ & $rate_h$ & $e(h,\tau,\Delta\alpha)$ &$rate_h$ & $e(h,\tau,\Delta\alpha)$ & $rate_h$\\
\hline
16 & 1.444281e-04 &  -    & 2.611489e-04 &   -    & 3.877349e-04 &   -   \\
32 & 3.661426e-05 &1.9799 & 6.585700e-05 & 1.9875 & 9.793241e-05 & 1.9852\\
64 & 9.261868e-06 &1.9830 & 1.669595e-05 & 1.9798 & 2.469879e-05 & 1.9873 \\
128& 2.316352e-06 &1.9994 & 4.176866e-06 & 1.9990 & 6.183403e-06 & 1.9980 \\
256& 5.812125e-07 &1.9947 & 1.045642e-06 & 1.9980 & 1.547730e-06 & 1.9982 \\
\hline
\end{tabular}
\label{tab1}
\end{center}
\end{table}
\begin{table}[t]\small\tabcolsep=5.3pt
\begin{center}
\caption{{\small {The errors and time convergence orders of the difference scheme \eqref{2.11}-\eqref{2.13} for Example \ref{example1} with $T$ = 1.5, $J$ = 100, $M$ = 2500 and different $\beta$.}}}
\begin{tabular}{ccccccc}
\hline  & \multicolumn{2}{c}{$\beta=1.2$} & \multicolumn{2}
{c}{$\beta=1.5$} & \multicolumn{2}{c}{$\beta=1.8$} \\
[-2pt] \cmidrule(lr){2-3} \cmidrule(lr){4-5} \cmidrule(lr){6-7} \\ [-11pt]
 $N$  & $e(h,\tau,\Delta\alpha)$ & $rate_\tau$ & $e(h,\tau,\Delta\alpha)$ &$rate_\tau$ & $e(h,\tau,\Delta\alpha)$ & $rate_\tau$\\
\hline
16& 4.790434e-04 &   -  & 3.999822e-04&   -  & 3.172525e-04&   -   \\
32& 1.215018e-04 &1.9792& 1.013382e-04&1.9808& 8.009703e-05&1.9858 \\
64& 3.058415e-05 &1.9901& 2.549518e-05&1.9909& 2.011908e-05&1.9932 \\
128&7.668025e-06 &1.9959& 6.393357e-06&1.9956& 5.045536e-06&1.9955 \\
256&1.915933e-06 &2.0008& 1.600711e-06&1.9979& 1.267602e-06&1.9929 \\
\hline
\end{tabular}
\label{tab2}
\end{center}
\end{table}

\begin{table}[t]\small\tabcolsep=5.3pt
\begin{center}
\caption{{\small {The errors and distributed-order convergence orders of the difference scheme \eqref{2.11}-\eqref{2.13} for Example \ref{example1} with $T$ = 1.5, $M$ = 4000, $N$ = 4000 and different $\beta$.}}}
\begin{tabular}{ccccccc}
\hline  & \multicolumn{2}{c}{$\beta=1.2$} & \multicolumn{2}
{c}{$\beta=1.5$} & \multicolumn{2}{c}{$\beta=1.8$} \\
[-2pt] \cmidrule(lr){2-3} \cmidrule(lr){4-5} \cmidrule(lr){6-7} \\ [-11pt]
 $J$  & $e(h,\tau,\Delta\alpha)$ & $rate_{\Delta\alpha}$ & $e(h,\tau,\Delta\alpha)$ &$rate_{\Delta\alpha}$ & $e(h,\tau,\Delta\alpha)$ & $rate_{\Delta\alpha}$\\
\hline
1 & 1.056413e-04 &   -  & 8.938958e-05&   -  & 7.216282e-05&   -   \\
2 & 2.703158e-05 &1.9665& 2.288894e-05&1.9655& 1.850067e-05&1.9637 \\
4 & 6.789749e-06 &1.9932& 5.748842e-06&1.9933& 4.646376e-06&1.9934 \\
8 & 1.692476e-06 &2.0042& 1.431730e-06&2.0055& 1.155441e-06&2.0077\\
16& 4.158579e-07 &2.0250& 3.504514e-07&2.0305& 2.810033e-07&2.0398 \\
\hline
\end{tabular}
\label{tab3}
\end{center}
\end{table}

\begin{table}[t]\small\tabcolsep=2.5pt
\begin{center}
\caption{{\small {CPU comparisons for solving Example \ref{example1} between the Cholesky, PCG, and PCG-based GSF methods with different $\beta$ and preconditioners, where $J$=50 and $T$=1.5.}}}
\begin{tabular}{cccccccccccc}
\\
\hline & &\multicolumn{1}{c}{$\rm{Chol}$} & \multicolumn{2}
{c}{$\rm{PCG(R)}$}& \multicolumn{2}{c}{$\rm{PCG(S)}$} & \multicolumn{2}
{c}{\rm{PCG-GSF(R)}} & \multicolumn{2}{c}{\rm{PCG-GSF(S)}} \\
[-2pt] \cmidrule(lr){3-3} \cmidrule(lr){4-5} \cmidrule(lr){6-7} \cmidrule(lr){8-9} \cmidrule(lr){10-11} \\ [-11pt]
 $\beta$  & $M=N$ & \rm{CPU} & \rm{CPU(Iter)} & \rm{Speed-up} & \rm{CPU(Iter)} & \rm{Speed-up} & \rm{CPU} & \rm{Speed-up} & \rm{CPU} & \rm{Speed-up}\\
\hline
    &$2^6$  &0.01  &0.02(4) & 0.50 &0.02(4) & 0.50 &\textbf{0.02} & \textbf{0.50} &0.02  &0.50 \\
    &$2^7$  &0.07  &0.06(4) & 1.17 &0.06(4) & 1.17 &\textbf{0.05} & \textbf{1.40} &0.05  &1.40 \\
    &$2^8$  &0.25  &0.21(4) & 1.19 &0.22(4) & 1.14 &\textbf{0.20} & \textbf{1.25} &0.20  &1.25 \\
1.2 &$2^9$  &1.19  &0.84(4) & 1.42 &0.84(4) & 1.42 &\textbf{0.73} & \textbf{1.63} &0.73  &1.63 \\
    &$2^{10}$&6.44 &1.87(3) & 3.44 &1.86(3) & 3.46 &\textbf{1.69} & \textbf{3.81} &1.69  &3.81 \\
    &$2^{11}$&50.15&10.08(3)& 4.98 &10.11(3)& 4.96 &\textbf{8.83} & \textbf{5.68} &8.84  &5.67 \\
  \\
    &$2^6$  &0.01  &0.02(5) & 0.50 &0.02(4) & 0.50 &\textbf{0.02} & \textbf{0.50} &0.02  &0.50 \\
    &$2^7$  &0.07  &0.06(5) & 1.17 &0.06(4) & 1.17 &\textbf{0.05} & \textbf{1.40} &0.05  &1.40 \\
    &$2^8$  &0.25  &0.22(4) & 1.14 &0.22(4) & 1.14 &\textbf{0.20} & \textbf{1.25} &0.20  &1.25 \\
1.5 &$2^9$  &1.20  &0.84(4) & 1.43 &0.84(4) & 1.43 &\textbf{0.73} & \textbf{1.64} &0.73  &1.64 \\
    &$2^{10}$&6.51 &1.98(4) & 3.29 &1.97(4) & 3.30 &\textbf{1.69} & \textbf{3.85} &1.69  &3.85 \\
    &$2^{11}$&50.12&10.78(4)& 4.65 &10.79(4)& 4.65 &\textbf{8.85} & \textbf{5.66} &8.85  &5.66 \\
 \\
    &$2^6$  &0.01  &0.02(5) & 0.50 &0.02(5) & 0.50 &\textbf{0.02} & \textbf{0.50} &0.02  &0.05 \\
    &$2^7$  &0.07  &0.06(5) & 1.17 &0.06(5) & 1.17 &\textbf{0.05} & \textbf{1.40} &0.05  &1.40 \\
    &$2^8$  &0.25  &0.22(5) & 1.14 &0.22(5) & 1.14 &\textbf{0.20} & \textbf{1.25} &0.20  &1.25 \\
1.9 &$2^9$  &1.17  &0.84(4) & 1.39 &0.84(4) & 1.39 &\textbf{0.73} & \textbf{1.60} &0.73  &1.60 \\
    &$2^{10}$&6.45 &1.98(4) & 3.26 &1.97(4) & 3.27 &\textbf{1.69} & \textbf{3.82} &1.69  &3.82 \\
    &$2^{11}$&49.95&10.76(4)& 4.64 &10.78(4)& 4.63 &\textbf{8.86} & \textbf{5.64} &8.86  &5.64 \\
\hline
\end{tabular}
\label{tab4}
\end{center}
\end{table}

\begin{example}\label{example1}
We first consider the 1D distributed-order and Riesz space fractional diffusion-wave problem:
$$\begin{cases}
\int_1^2\Gamma(5-\alpha){}^C_0D_t^{\alpha}u(x,t)d\alpha=\frac{\partial^{\beta}u(x,t)}{\partial\mid
x\mid^{\beta}}+f(x,t),\quad 0<x<1,~0<t\leq T,\\
u(x,0)=0,\quad u_t(x,0)=0,\quad 0<x<1,\\
 u(0,t)=0,\quad u(1,t)=0,\quad 0\leq t\leq T,
\end{cases}$$
with $$f(x,t)=f_0(x,t)-ct^4\left[f_1(x)-3f_2(x)+3f_3(x)-f_4(x)\right],$$
where $c=-\frac{1}{2\cos(\beta\pi/2)}$, and $f_0(x,t)=24x^3(1-x)^3(t^3-t^2)/\ln t,$
\begin{align*}
&f_1(x)={\Gamma(4)}/{\Gamma(4-\beta)}[x^{3-\beta}+(1-x)^{3-\beta}],
~f_2(x)={\Gamma(5)}/{\Gamma(5-\beta)}[x^{4-\beta}+(1-x)^{4-\beta}],\\
&f_3(x)={\Gamma(6)}/{\Gamma(6-\beta)}[x^{5-\beta}+(1-x)^{5-\beta}],
~f_4(x)={\Gamma(7)}/{\Gamma(7-\beta)}[x^{6-\beta}+(1-x)^{6-\beta}].
\end{align*}
The exact solution of the above problem is $u(x,t)=t^4x^3(1-x)^3$.
\end{example}

Gorenflo et al. \cite{Gorenflo2013Fundamental} proposed that the solution $u(x,t)$ of the distributed order time-fractional diffusion-wave problem can be interpreted as a probability density function of the spatial variable $x$ evolving in time $t$. If we set $\omega(\alpha)=\delta(\alpha-\alpha_0),~1<\alpha_0\leq2$, it reduces to the single order time-fractional diffusion-wave equation \cite{Schneider1989Fractional}. When $\omega(\alpha)=\delta(\alpha-2\alpha_0)+2\lambda\delta(\alpha-\alpha_0),~0<\alpha_0\leq1$, it becomes the time-fractional telegraph equation\cite{Orsingher2004Time}. The probability density function is expressed through the weight function \cite{Chechkin2002Retarding}. We chose $\omega(\alpha)=\Gamma(5-\alpha)$ in our experiment as in the references \cite{bu2017finite,Gao2017Two,gao2015some}, which is advantageous to the exact solution is deduced.

To verify the convergence order of the proposed scheme \eqref{2.11}-\eqref{2.13}, in Tables \ref{tab1}-\ref{tab3}, let $$e(h,\tau,\Delta\alpha)=\max\limits_{0\leq i\leq M\atop0\leq n\leq N}|u(x_i,t_n,\Delta\alpha)-u_i^n|,$$
where $u(x_i,t_n,\Delta\alpha)$ and $u_i^n$ are the exact and numerical solutions at step sizes $h$, $\tau$ and $\Delta\alpha$, respectively. Thus the convergence orders are defined as
$$rate_h=\log_2\frac{e(h,\tau,\Delta\alpha)}{e(h/2,\tau,\Delta\alpha)},\quad
rate_{\tau}=\log_2\frac{e(h,\tau,\Delta\alpha)}{e(h,\tau/2,\Delta\alpha)},\quad
rate_{\Delta\alpha}=\log_2\frac{e(h,\tau,\Delta\alpha)}{e(h,\tau,{\Delta\alpha}/2)}.$$

Tables \ref{tab4}-\ref{tab6} are to verify the efficiency of the proposed PCG-based GSF method described in Section \ref{section4}. The resultant linear systems \eqref{2.14} of the 1D case are solved by using the Cholesky method, the PCG method and the proposed PCG-based GSF method, respectively.
The stopping criterion of all iterative methods is
$${\|r^{(k)}\|_2}/{\|r^{(0)}\|_2} < 10^{-12},$$
where $r^{(k)}$ is the residual vector after $k$ iterations, and the initial guess in each time level is given as the zero vector.

Besides the modified R. Chan's preconditioner $R$, we also test the Strang's circulant preconditioner \cite{gu2015strang}:
\begin{equation*}
 S=\mu_0I+K\nu_{\beta}s(G_\beta),
 \end{equation*}
where $s(\cdot)$ is the Strang's circulant preconditioner for an arbitrary Toeplitz matrix. We can also prove that the preconditioner $S$ has similar theoretical  properties to the preconditioner $R$ in the same way.
\begin{figure}[!hbt]
  \centering
  \subfigure{
    \includegraphics[width = 6.7 cm]{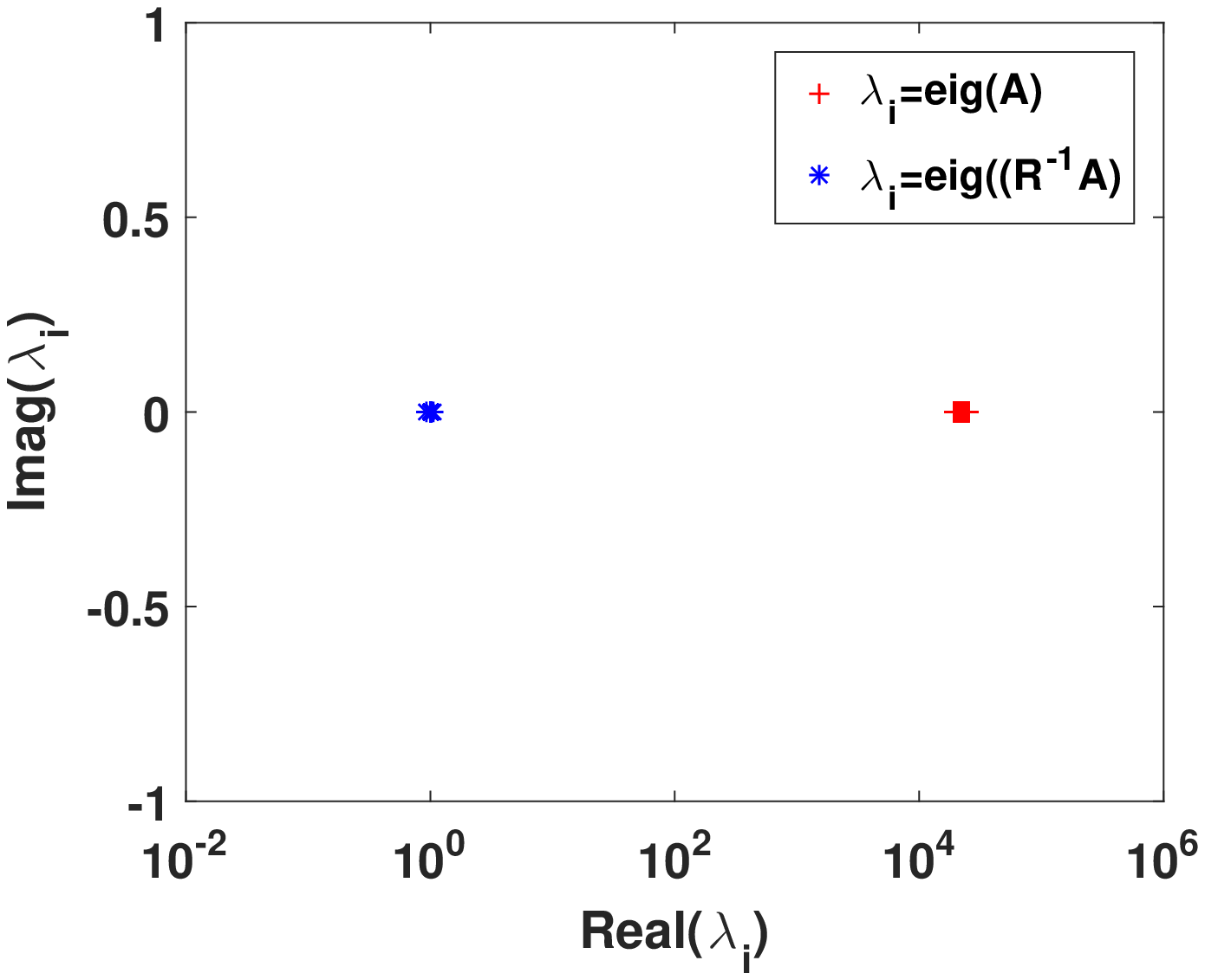}}
     \hspace{1 cm}
  \subfigure{
    \includegraphics[width = 6.7 cm]{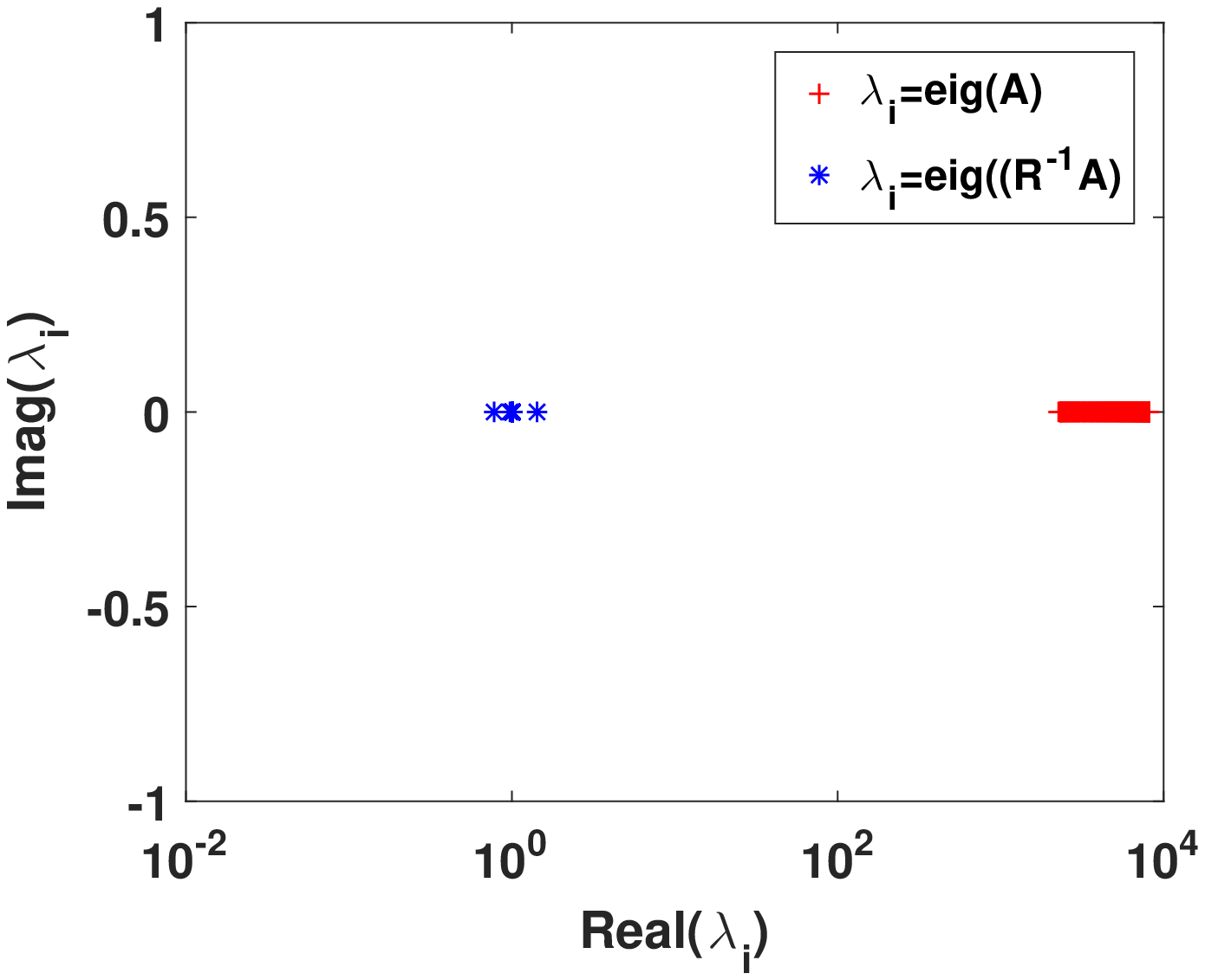}}\\
    (\textbf{a}) $T$=1.5 \hspace{5.8cm} (\textbf{b}) $T$ = 5 \\
  \caption{Spectrum of both original (red) and R. Chan's-based preconditioned (blue) matrices for Example \ref{example1} at (\textbf{a}) $T=1.5$ and (\textbf{b}) $T=5$, respectively, where $\beta$ = 1.5, $J$ = 50 and $M$ = $N$ = 256.}
  \label{fig2}
\end{figure}

In all tables, all times are the average of 10 runs of the algorithm. ``CPU" stands for the total CPU time in seconds to solve the whole linear systems. ``Chol" means that the Cholesky method is used to solve the discretized linear systems; ``PCG(R)" means that the PCG method with R. Chan's preconditioner is applied to solve the discretized linear systems; ``PCG-GSF(R)" means that the proposed PCG-based GSF method with R. Chan's preconditioner is applied to solve the discretized linear systems;
The meanings of ``PCG(S)" and ``PCG-GSF(S)" are similar to ``PCG(R)" and `PCG-GSF(R)", respectively.
In addition, ``Speed-up" denotes the run time speed-up compared to the Cholesky method, e.g.:
\begin{center}
 Speed-up$_{\rm{PCG-GSF(R)}}=\frac{\rm{CPU_{Chol}}}{\rm{CPU_{PCG-GSF(R)}}}$.
\end{center}
``Iter" means the average number of iterations of the method for solving the discretized linear systems at all time levels, i.e.,
 \begin{equation*}
 \rm{Iter}=\frac{1}{N}\sum\limits_{n=1}^{N}Iter(n),
 \end{equation*}
where $\rm{Iter(n)}$ is the number of iterations at $n$-th time step.

\begin{table}[t]\small\tabcolsep=2.5pt
\begin{center}
\caption{{\small {CPU comparisons for solving Example \ref{example1} between the Cholesky, PCG, and PCG-based GSF methods with different $\beta$ and preconditioners, where $J$=300 and $T$=1.5.}}}
\begin{tabular}{cccccccccccc}
\\
\hline & &\multicolumn{1}{c}{$\rm{Chol}$} & \multicolumn{2}
{c}{$\rm{PCG(R)}$}& \multicolumn{2}{c}{$\rm{PCG(S)}$} & \multicolumn{2}
{c}{\rm{PCG-GSF(R)}} & \multicolumn{2}{c}{\rm{PCG-GSF(S)}} \\
[-2pt] \cmidrule(lr){3-3} \cmidrule(lr){4-5} \cmidrule(lr){6-7} \cmidrule(lr){8-9} \cmidrule(lr){10-11} \\ [-11pt]
 $\beta$  & $M=N$ & \rm{CPU} & \rm{CPU(Iter)} & \rm{Speed-up} & \rm{CPU(Iter)} & \rm{Speed-up} & \rm{CPU} & \rm{Speed-up} & \rm{CPU} & \rm{Speed-up}\\
\hline
    &$2^6$  &0.01  &0.02(4) & 0.50 &0.02(4) & 0.50 &\textbf{0.02} & \textbf{0.50} &0.02 & 0.50\\
    &$2^7$  &0.07  &0.06(4) & 1.17 &0.06(4) & 1.17 &\textbf{0.05} & \textbf{1.40} &0.05 & 1.40\\
    &$2^8$  &0.25  &0.22(4) & 1.14 &0.22(4) & 1.14 &\textbf{0.20} & \textbf{1.25} &0.20 & 1.25\\
1.2 &$2^9$  &1.20  &0.84(4) & 1.43 &0.83(4) & 1.45 &\textbf{0.72} & \textbf{1.67} &0.72 & 1.67\\
    &$2^{10}$&6.49 &1.87(3) & 3.47 &1.84(3) & 3.53 &\textbf{1.67} & \textbf{3.89} &1.67 & 3.89\\
    &$2^{11}$&50.93&10.16(3)& 5.01 &10.03(3)& 5.08 &\textbf{8.69} & \textbf{5.86} &8.72 & 5.84\\
  \\
    &$2^6$  &0.01  &0.02(5) & 0.50 &0.02(4) & 0.50 &\textbf{0.02} & \textbf{0.50} &0.02 & 0.50\\
    &$2^7$  &0.06  &0.06(5) & 1.00 &0.06(4) & 1.00 &\textbf{0.05} & \textbf{1.20} &0.05 & 1.20\\
    &$2^8$  &0.25  &0.22(4) & 1.14 &0.22(4) & 1.14 &\textbf{0.20} & \textbf{1.25} &0.20 & 1.25\\
1.5 &$2^9$  &1.16  &0.84(4) & 1.38 &0.83(4) & 1.40 &\textbf{0.72} & \textbf{1.61} &0.72 & 1.61\\
    &$2^{10}$&6.40 &1.95(4) & 3.28 &1.95(4) & 3.28 &\textbf{1.67} & \textbf{3.83} &1.67 & 3.83\\
    &$2^{11}$&50.67&10.75(4)& 4.71 &10.70(4)& 4.74 &\textbf{8.71} & \textbf{5.82} &8.71 & 5.82\\
 \\
    &$2^6$  &0.01  &0.02(5) & 0.50 &0.02(5) & 0.50 &\textbf{0.02} & \textbf{0.50} &0.02 & 0.50\\
    &$2^7$  &0.07  &0.06(5) & 1.17 &0.06(5) & 1.17 &\textbf{0.05} & \textbf{1.40} &0.05 & 1.40\\
    &$2^8$  &0.24  &0.22(5) & 1.09 &0.22(5) & 1.09 &\textbf{0.20} & \textbf{1.20} &0.19 & 1.26\\
1.9 &$2^9$  &1.18  &0.83(4) & 1.42 &0.83(4) & 1.42 &\textbf{0.72} & \textbf{1.64} &0.72 & 1.64\\
    &$2^{10}$&6.45 &1.93(4) & 3.34 &1.95(4) & 3.31 &\textbf{1.67} & \textbf{3.86} &1.67 & 3.86\\
    &$2^{11}$&50.73&10.71(4)& 4.74 &10.71(4)& 4.74 &\textbf{8.69} & \textbf{5.84} &8.71 & 5.82\\
\hline
\end{tabular}
\label{tab5}
\end{center}
\end{table}

\begin{table}[t]\small\tabcolsep=2.5pt
\begin{center}
\caption{{\small {CPU comparisons for solving Example \ref{example1} between the Cholesky, PCG, and PCG-based GSF methods with different $\beta$ and preconditioners, where $J$=50 and $T$=10.}}}
\begin{tabular}{cccccccccccc}
\\
\hline & &\multicolumn{1}{c}{$\rm{Chol}$} & \multicolumn{2}
{c}{$\rm{PCG(R)}$}& \multicolumn{2}{c}{$\rm{PCG(S)}$} & \multicolumn{2}
{c}{\rm{PCG-GSF(R)}} & \multicolumn{2}{c}{\rm{PCG-GSF(S)}} \\
[-2pt] \cmidrule(lr){3-3} \cmidrule(lr){4-5} \cmidrule(lr){6-7} \cmidrule(lr){8-9} \cmidrule(lr){10-11} \\ [-11pt]
 $\beta$  & $M=N$ & \rm{CPU} & \rm{CPU(Iter)} & \rm{Speed-up} & \rm{CPU(Iter)} & \rm{Speed-up} & \rm{CPU} & \rm{Speed-up} & \rm{CPU} & \rm{Speed-up}\\
\hline
    &$2^6$  &0.01  &0.02(5) & 0.50 &0.02(6) & 0.50 &\textbf{0.02} & \textbf{0.50} &0.02 & 0.50\\
    &$2^7$  &0.07  &0.06(5) & 1.17 &0.07(6) & 1.00 &\textbf{0.05} & \textbf{1.40} &0.05 & 1.40\\
    &$2^8$  &0.24  &0.23(5) & 1.04 &0.23(5) & 1.04 &\textbf{0.20} & \textbf{1.20} &0.20 & 1.20\\
1.2 &$2^9$  &1.19  &0.88(5) & 1.35 &0.88(5) & 1.35 &\textbf{0.73} & \textbf{1.63} &0.73 & 1.63\\
    &$2^{10}$&6.45 &2.08(5) & 3.10 &2.07(5) & 3.12 &\textbf{1.68} & \textbf{3.84} &1.69 & 3.82\\
    &$2^{11}$&53.21&11.43(5)& 4.66 &11.45(5)& 4.65 &\textbf{8.80} & \textbf{6.05} &8.81 & 6.04\\
  \\
    &$2^6$  &0.01  &0.02(6) & 0.50 &0.02(7) & 0.50 &\textbf{0.02} & \textbf{0.50} &0.02 & 0.50\\
    &$2^7$  &0.07  &0.07(6) & 1.00 &0.07(7) & 1.00 &\textbf{0.05} & \textbf{1.40} &0.05 & 1.40\\
    &$2^8$  &0.24  &0.24(6) & 1.00 &0.24(6) & 1.00 &\textbf{0.20} & \textbf{1.20} &0.20 & 1.20\\
1.5 &$2^9$  &1.19  &0.93(6) & 1.28 &0.88(5) & 1.35 &\textbf{0.73} & \textbf{1.63} &0.73 & 1.63\\
    &$2^{10}$&6.46 &2.07(5) & 3.12 &2.07(5) & 3.12 &\textbf{1.69} & \textbf{3.82} &1.69 & 3.82\\
    &$2^{11}$&51.31&11.47(5)& 4.47 &11.49(5)& 4.47 &\textbf{8.77} & \textbf{5.85} &8.78 & 5.84\\
 \\
    &$2^6$  &0.01  &0.02(5) & 0.50 &0.02(7) & 0.50 &\textbf{0.02} & \textbf{0.50} &0.02 & 0.50\\
    &$2^7$  &0.07  &0.07(6) & 1.00 &0.07(6) & 1.00 &\textbf{0.05} & \textbf{1.40} &0.05 & 1.40\\
    &$2^8$  &0.24  &0.24(6) & 1.00 &0.24(6) & 1.00 &\textbf{0.20} & \textbf{1.20} &0.20 & 1.20\\
1.9 &$2^9$  &1.21  &0.92(6) & 1.32 &0.88(5) & 1.38 &\textbf{0.73} & \textbf{1.66} &0.73 & 1.66\\
    &$2^{10}$&6.44 &2.08(5) & 3.10 &2.07(5) & 3.11 &\textbf{1.69} & \textbf{3.81} &1.69 & 3.81\\
    &$2^{11}$&51.14&11.48(5)& 4.45 &11.48(5)& 4.45 &\textbf{8.80} & \textbf{5.81} &8.81 & 5.80\\
\hline
\end{tabular}
\label{tab6}
\end{center}
\end{table}

\begin{figure}[!hbt]
  \centering
  \subfigure{
    \includegraphics[width = 6.7 cm]{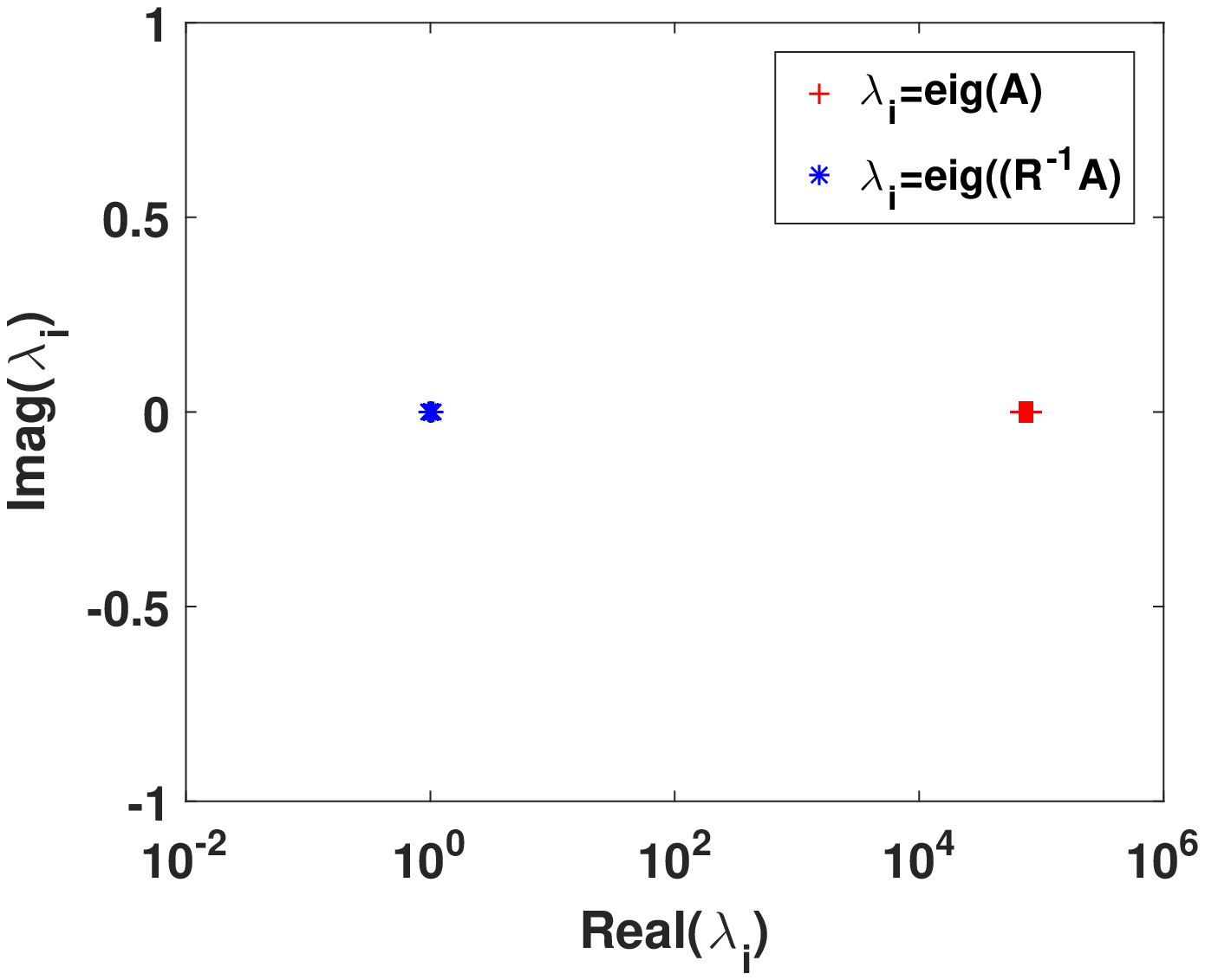}}
     \hspace{1 cm}
  \subfigure{
    \includegraphics[width = 6.7 cm]{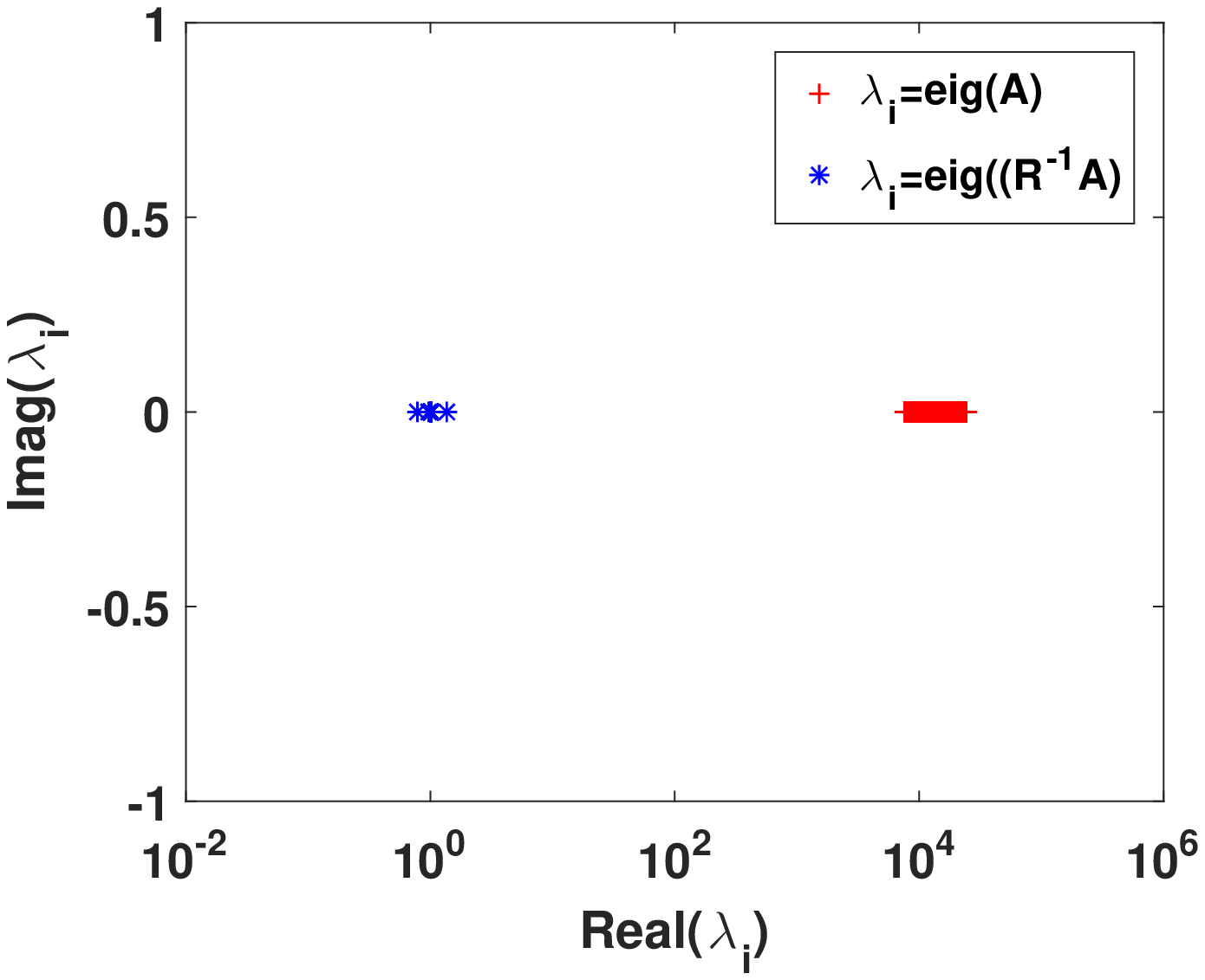}}\\
    (\textbf{a}) $T$=1.5 \hspace{5.8cm} (\textbf{b}) $T$ = 5 \\
  \caption{Spectrum of both original (red) and R. Chan's-based preconditioned (blue) matrices for Example \ref{example1} at (\textbf{a}) $T=1.5$ and (\textbf{b}) $T=5$, respectively, where $\beta$ = 1.5, $J$ = 50 and $M$ = $N$ = 512.}
  \label{fig3}
\end{figure}

The comparisons of the exact and numerical solutions of the numerical scheme \eqref{2.11}-\eqref{2.13} for Example \ref{example1} with different $\beta$ and $T$ are shown in Fig. \ref{fig1}. We can see that the numerical solutions are in good agreement with the exact solutions.

Tables \ref{tab1}-\ref{tab3} give the maximum errors and convergence orders of the numerical scheme \eqref{2.11}-\eqref{2.13} for solving Example \ref{example1} in space, time, and distributed order, respectively. From them, we can observe that the convergence order of the numerical scheme \eqref{2.11}-\eqref{2.13} is $\mathcal{O}(h^2+\tau^2+\Delta\alpha^2)$ as anticipated.

In Figs. \ref{fig2}-\ref{fig3}, the eigenvalues of both the original matrix $A$ and the
R. Chan's preconditioned matrix $R^{-1}A$ for Example \ref{example1} with different
$T$ are plotted. It can be seen that all the spectrum of the preconditioned matrix
$R^{-1}A$ are well separated away from 0, and the eigenvalues lie within a small interval around 1, except for few outliers, which is in agreement with the theoretical analysis. It validates the robustness and effectiveness of the proposed preconditioner in terms of spectrum clustering.

In Tables \ref{tab4}-\ref{tab6}, we compare the CPU time for solving Example \ref{example1} by the Cholesky method, the PCG method and the PCG-based GSF method with circulant preconditioners $R$ and $S$.
It shows that the CPU time of the PCG-based GSF method with $R$ and $S$ circulant preconditioners is much fewer than the Cholesky method. When $M=N=2^{11}$, the Speed-up of the PCG-based GSF methods is more than five times, although the parameters $\beta$, $J$, and $T$ take different values. It also shows that the CPU time by the PCG-based GSF methods is less than that by the PCG methods.
In addition, the iteration numbers of the PCG(R) method almost keep constant when $M$ is increasing rapidly, which verifies the superlinear convergence of the PCG method with R. Chan-based circulant preconditioner numerically.
We also observe that the performances of the PCG-GSF(R) and PCG-GSF(S) methods are almost the same in terms of the CPU time. The proposed fast solution algorithm in Section \ref{section4} is better than the other testing methods.

\begin{table}[t]\small\tabcolsep=5.3pt
\begin{center}
\caption{{\small {The errors and space convergence orders of the difference scheme \eqref{6.11}-\eqref{6.13} for Example \ref{example2} with $T$ = 1.5, $J$ = 50, $N$ = 2000 and different $\beta$.}}}
\begin{tabular}{ccccccc}
\hline  & \multicolumn{2}{c}{$\beta=\gamma=1.2$} & \multicolumn{2}
{c}{$\beta=\gamma=1.5$} & \multicolumn{2}{c}{$\beta=\gamma=1.8$} \\
[-2pt] \cmidrule(lr){2-3} \cmidrule(lr){4-5} \cmidrule(lr){6-7} \\ [-11pt]
 $\widetilde{M}$ & $e({\widetilde{h}},\tau,\Delta\alpha)$ & ${\widetilde{rate}_h}$ & $e({\widetilde{h}},\tau,\Delta\alpha)$ &${\widetilde{rate}_h}$ & $e({\widetilde{h}},\tau,\Delta\alpha)$ & ${\widetilde{rate}_h}$\\
\hline
8  & 1.369354e-05 &  -    & 2.371348e-05 &   -    & 3.576549e-05 &   -    \\
16 & 3.405397e-06 &2.0076 & 5.857784e-06 & 2.0173 & 8.708526e-06 & 2.0381 \\
32 & 8.502827e-07 &2.0018 & 1.460064e-06 & 2.0043 & 2.162141e-06 & 2.0100 \\
64 & 2.124501e-07 &2.0008 & 3.646990e-07 & 2.0013 & 5.395161e-07 & 2.0027 \\
128& 5.302274e-08 &2.0024 & 9.107704e-08 & 2.0015 & 1.347362e-07 & 2.0015 \\
\hline
\end{tabular}
\label{tab7}
\end{center}
\end{table}
\begin{table}[t]\small\tabcolsep=5.3pt
\begin{center}
\caption{{\small {The errors and time convergence orders of the difference scheme \eqref{6.11}-\eqref{6.13} for Example \ref{example2} with $T$ = 1.5, $J$ = 50, $\widetilde{M}$ = 800 and different $\beta$.}}}
\begin{tabular}{ccccccc}
\hline  & \multicolumn{2}{c}{$\beta=\gamma=1.2$} & \multicolumn{2}
{c}{$\beta=\gamma=1.5$} & \multicolumn{2}{c}{$\beta=\gamma=1.8$} \\
[-2pt] \cmidrule(lr){2-3} \cmidrule(lr){4-5} \cmidrule(lr){6-7} \\ [-11pt]
 $N$  & $e({\widetilde{h}},\tau,\Delta\alpha)$ & ${\widetilde{rate}_\tau}$ & $e({\widetilde{h}},\tau,\Delta\alpha)$ &${\widetilde{rate}_\tau}$ & $e({\widetilde{h}},\tau,\Delta\alpha)$ & ${\widetilde{rate}_\tau}$\\
\hline
4 & 7.332513e-05 &   -  & 5.407815e-05&   -  & 3.758769e-05&   -   \\
8 & 1.931968e-05 &1.9242& 1.406583e-05&1.9429& 9.608692e-06&1.9678 \\
16& 4.921064e-06 &1.9730& 3.570603e-06&1.9780& 2.411094e-06&1.9947 \\
32& 1.241275e-06 &1.9871& 9.001434e-07&1.9879& 6.055447e-07&1.9934 \\
64& 3.122634e-07 &1.9910& 2.272263e-07&1.9860& 1.538685e-07&1.9765 \\
\hline
\end{tabular}
\label{tab8}
\end{center}
\end{table}

\begin{table}[t]\small\tabcolsep=5.3pt
\begin{center}
\caption{{\small {The errors and distributed-order convergence orders of the difference scheme \eqref{6.11}-\eqref{6.13} for Example \ref{example2} with $T$ = 1.5, $\widetilde{M}$ = 800, $N$ = 2000 and different $\beta$.}}}
\begin{tabular}{ccccccc}
\hline  & \multicolumn{2}{c}{$\beta=\gamma=1.2$} & \multicolumn{2}
{c}{$\beta=\gamma=1.5$} & \multicolumn{2}{c}{$\beta=\gamma=1.8$} \\
[-2pt] \cmidrule(lr){2-3} \cmidrule(lr){4-5} \cmidrule(lr){6-7} \\ [-11pt]
 $J$  & $e(\widetilde{h},\tau,\Delta\alpha)$ & $\widetilde{rate}_{\Delta\alpha}$ & $e(\widetilde{h},\tau,\Delta\alpha)$ &$\widetilde{rate}_{\Delta\alpha}$ & $e(\widetilde{h},\tau,\Delta\alpha)$ & $\widetilde{rate}_{\Delta\alpha}$\\
\hline
1 & 1.063403e-06 &   -  & 8.405192e-07&   -  & 6.396684e-07&   -  \\
2 & 2.730836e-07 &1.9613& 2.143924e-07&1.9710& 1.618385e-07&1.9828\\
4 & 6.753548e-08 &2.0156& 5.205615e-08&2.0421& 3.804022e-08&2.0890\\
8 & 1.565594e-08 &2.1089& 1.111329e-08&2.2278& 6.956504e-09&2.4511\\
\hline
\end{tabular}
\label{tab9}
\end{center}
\end{table}

\begin{example}\label{example2}
Consider the following 2D time distributed-order and Riesz space fractional diffusion-wave problem:
$$\begin{cases}
\int_1^2\Gamma(5-\alpha){}^C_0D_t^{\alpha}u(x,y,t)d\alpha=\frac{\partial^{\beta}u(x,y,t)}{\partial\mid
x\mid^{\beta}}+\frac{\partial^{\gamma}u(x,y,t)}{\partial\mid
y\mid^{\gamma}}+f(x,y,t),\quad (x,y)\in\Omega,~0<t\leq T,\\
u(x,y,0)=0,\quad u_t(x,y,0)=0,\quad (x,y)\in\Omega,\\
u(x,y,t)=0,\quad (x,y)\in\partial\Omega,\quad 0\leq t\leq T,
\end{cases}$$
with $\Omega=(0,1)\times(0,1)$, the exact solution is $u(x,t)=t^4x^3(1-x)^3y^3(1-y)^3$, and
\begin{align*}
f(x,y,t)=&f_0(x,y,t)-c_1t^4y^3(1-y)^3[f_1(x)-3f_2(x)+3f_3(x)-f_4(x)]\\
&-c_2t^4x^3(1-x)^3[g_1(y)-3g_2(y)+3g_3(y)-g_4(y)],
\end{align*}
where
$c_1=-\frac{1}{2\cos(\beta\pi/2)},~ c_2=-\frac{1}{2\cos(\gamma\pi/2)},$
 and $f_0(x,y,t)=24x^3(1-x)^3y^3(1-y)^3(t^3-t^2)/\ln t$,
\begin{align*}
&f_1(x)={\Gamma(4)}/{\Gamma(4-\beta)}[x^{3-\beta}+(1-x)^{3-\beta}],
~f_2(x)={\Gamma(5)}/{\Gamma(5-\beta)}[x^{4-\beta}+(1-x)^{4-\beta}],\\
&f_3(x)={\Gamma(6)}/{\Gamma(6-\beta)}[x^{5-\beta}+(1-x)^{5-\beta}],
~f_4(x)={\Gamma(7)}/{\Gamma(7-\beta)}[x^{6-\beta}+(1-x)^{6-\beta}],\\
&g_1(y)={\Gamma(4)}/{\Gamma(4-\gamma)}[y^{3-\gamma}+(1-y)^{3-\gamma}],
~g_2(y)={\Gamma(5)}/{\Gamma(5-\gamma)}[y^{4-\gamma}+(1-y)^{4-\gamma}],\\
&g_3(y)={\Gamma(6)}/{\Gamma(6-\gamma)}[y^{5-\gamma}+(1-y)^{5-\gamma}],
~g_4(y)={\Gamma(7)}/{\Gamma(7-\gamma)}[y^{6-\gamma}+(1-y)^{6-\gamma}].
\end{align*}
\end{example}

\begin{figure}[!hbt]
  \centering
  \subfigure{
    \includegraphics[width = 6 cm]{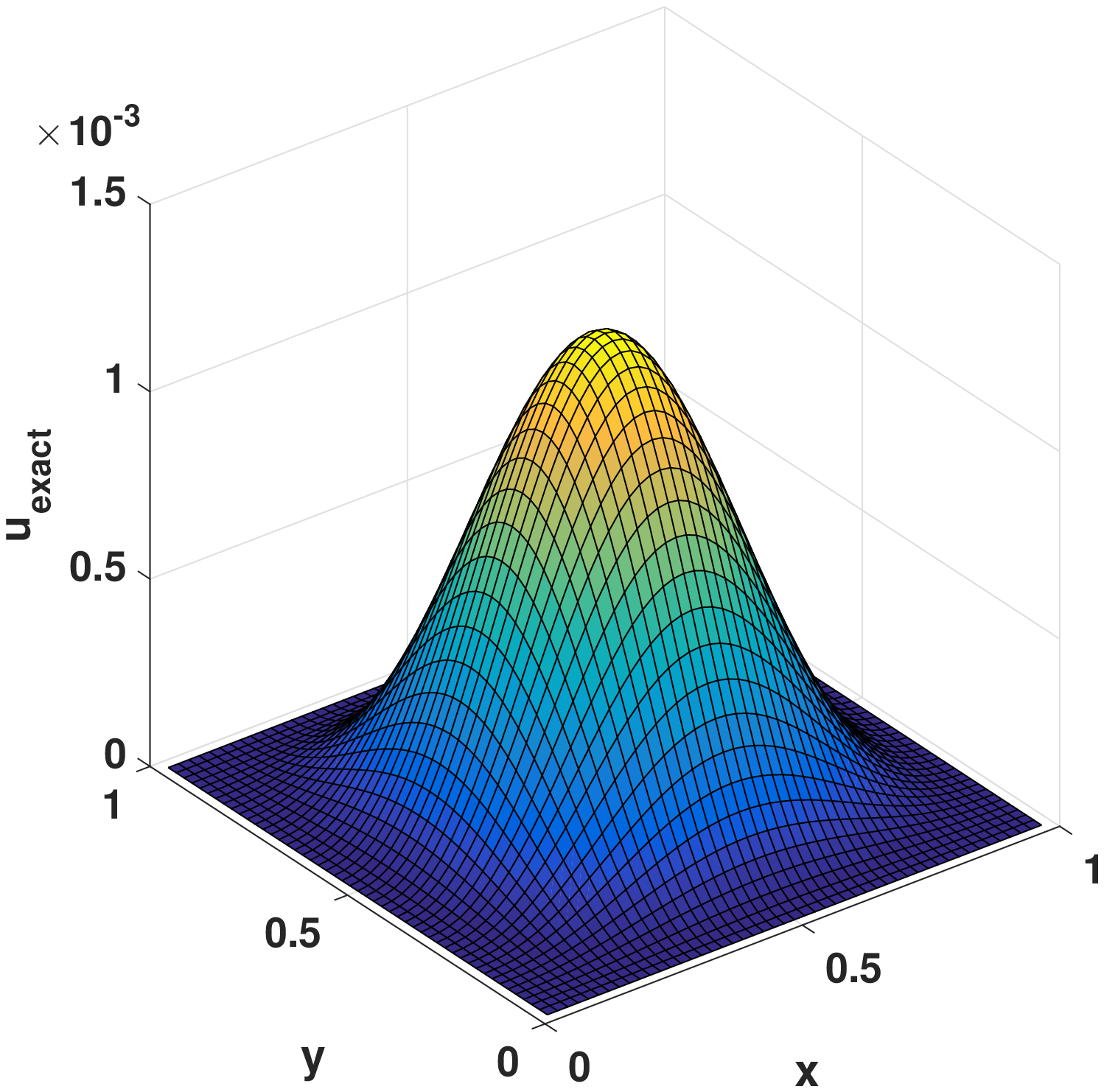}}
     \hspace{1 cm}
  \subfigure{
    \includegraphics[width = 6 cm]{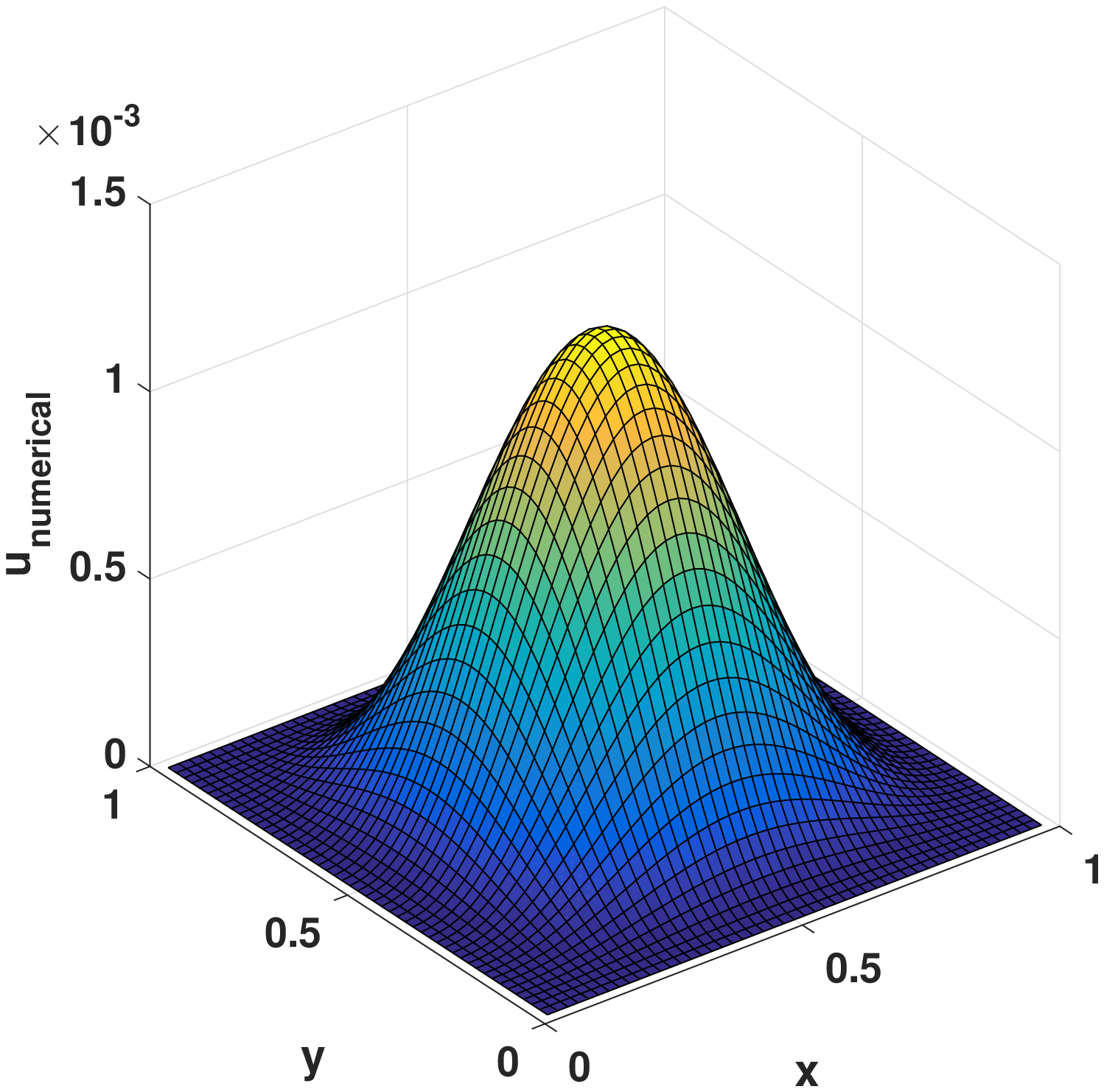}}\\
  \caption{A comparison of the exact solution and the numerical solution of the scheme \eqref{6.11}-\eqref{6.13} for Example \ref{example2} at $\beta$ = $\gamma$ = 1.8, $T=1.5$ with $J$ = 50, $M_1=M_2$ =50, and $N$ = 10.}
  \label{fig4}
\end{figure}

\begin{figure}[!hbt]
  \centering
  \subfigure{
    \includegraphics[width = 6 cm]{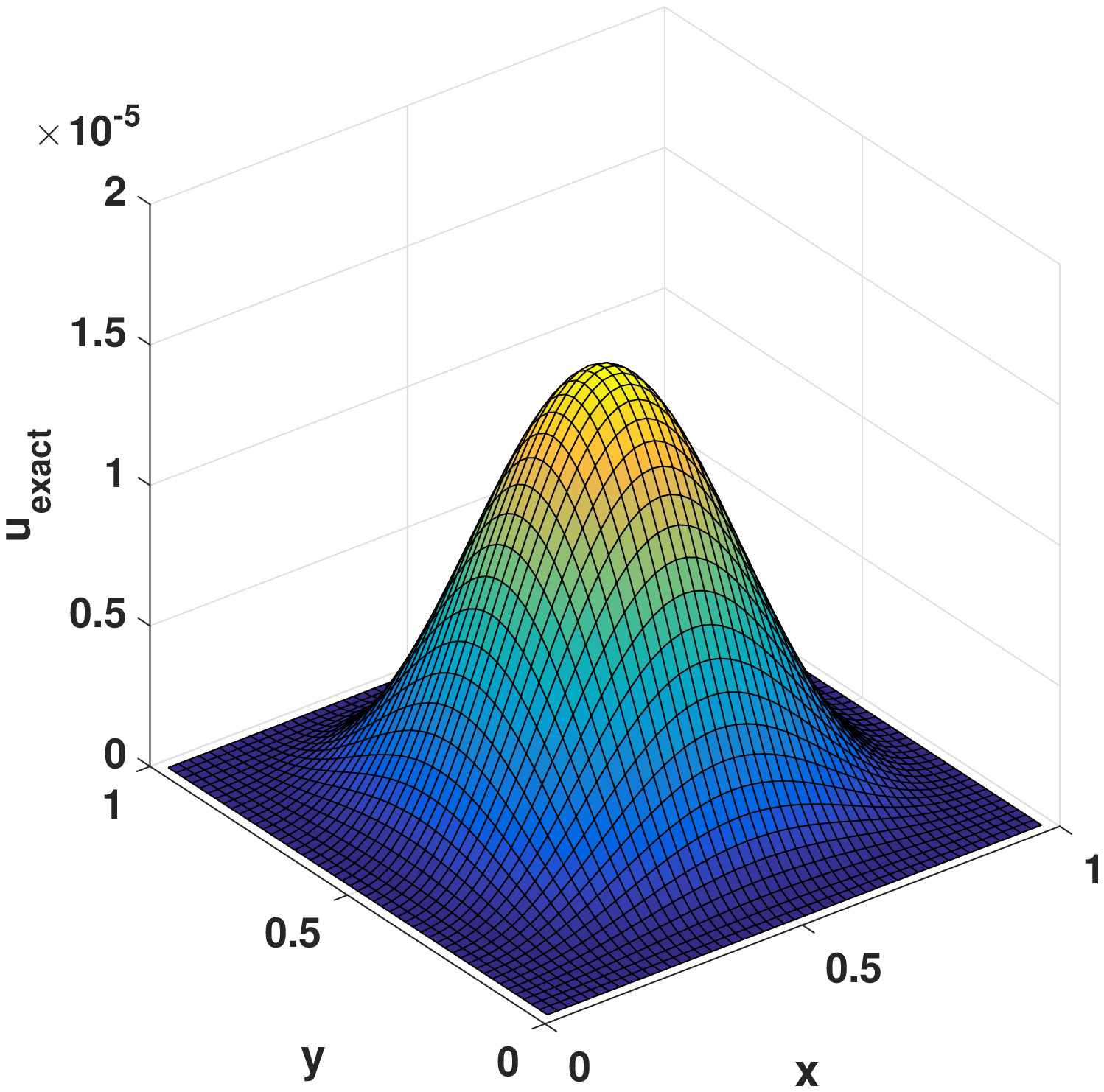}}
     \hspace{1 cm}
  \subfigure{
    \includegraphics[width = 6 cm]{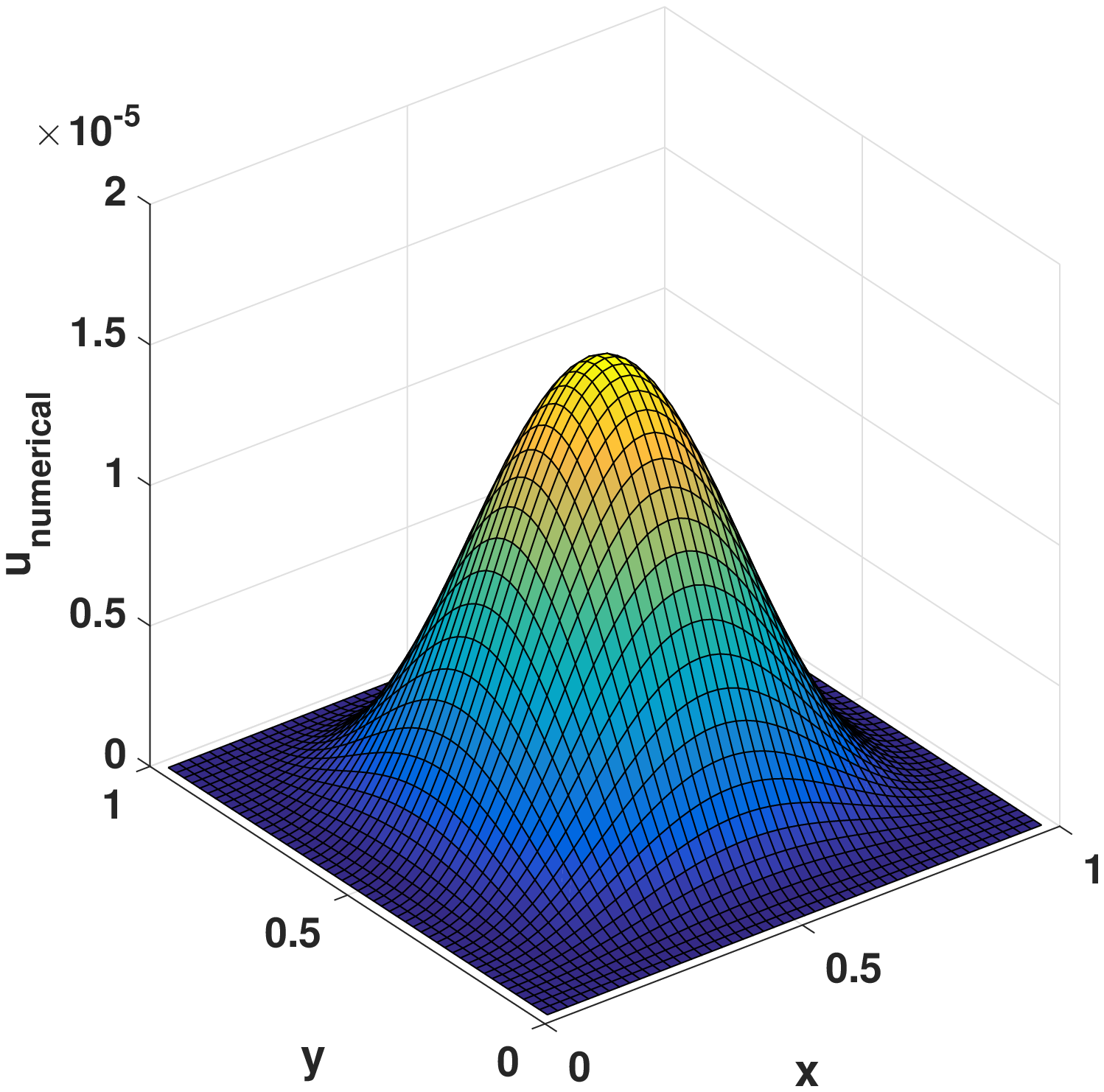}}\\
  \caption{A comparison of the exact solution and the numerical solution of the scheme \eqref{6.11}-\eqref{6.13} for Example \ref{example2} at $\beta$ = $\gamma$ = 1.3, $T=0.5$ with $J$ = 50, $M_1=M_2$ =50, and $N$ = 10.}
  \label{fig5}
\end{figure}

\begin{table}[t]\small\tabcolsep=9.0pt
\begin{center}
\caption{{\small {Comparisons for solving Example \ref{example2} between the Cholesky, PCG, and GL-PCG methods with different $\beta$ and preconditioners, where $l=\frac{M}{2}$, $J$ = 50 and $T$ = 1.5.}}}
\begin{tabular}{cccccccccc}
\\
\hline & & &\multicolumn{1}{c}{$\rm{Chol}$} & \multicolumn{2}
{c}{$\rm{PCG(R_2)}$}& \multicolumn{2}{c}{GL-PCG($R_2$)} & \multicolumn{2}
{c}{GL-PCG($\mathcal{\tilde{L}}_l$)} \\
[-2pt] \cmidrule(lr){4-4} \cmidrule(lr){5-6} \cmidrule(lr){7-8} \cmidrule(lr){9-10} \\ [-11pt]
 $\beta$=$\gamma$  & $\widetilde{M}$ & $N$ & $\rm{CPU}$ & $\rm{CPU}$ & $\rm{Iter}$ & $\rm{CPU}$ & $\rm{Iter}$ & $\rm{CPU}$ & $\rm{Iter}$ \\
\hline
    &$2^3$ &$2^3$  &0.00  &0.01 &5.8  &0.01 &5.6  &\textbf{0.01} &\textbf{4.0} \\
    &$2^4$ &$2^4$  &0.03  &0.03 &5.0  &0.03 &5.0  &\textbf{0.03} &\textbf{3.0} \\
1.2 &$2^5$ &$2^5$  &0.20  &0.22 &5.0  &0.23 &5.0  &\textbf{0.22} &\textbf{3.0} \\
    &$2^6$ &$2^6$  &2.94  &1.19 &4.0  &1.25 &4.0  &\textbf{1.33} &\textbf{3.0} \\
    &$2^7$ &$2^7$  &66.33 &9.16 &3.0  &8.57 &3.0  &\textbf{9.02} &\textbf{2.0} \\
  \\
    &$2^3$ &$2^3$  &0.00  &0.01 &6.0  &0.01 &6.0  &\textbf{0.01} &\textbf{4.0} \\
    &$2^4$ &$2^4$  &0.03  &0.03 &6.0  &0.03 &6.0  &\textbf{0.03} &\textbf{3.0} \\
1.5 &$2^5$ &$2^5$  &0.20  &0.23 &6.0  &0.23 &5.0  &\textbf{0.22} &\textbf{3.0} \\
    &$2^6$ &$2^6$  &2.91  &1.26 &5.0  &1.31 &5.0  &\textbf{1.34} &\textbf{3.0} \\
    &$2^7$ &$2^7$  &65.35 &9.73 &4.0  &9.04 &4.0  &\textbf{9.03} &\textbf{2.0} \\
 \\
    &$2^3$ &$2^3$  &0.00  &0.01 &7.0  &0.01 &7.0  &\textbf{0.01} &\textbf{3.0}  \\
    &$2^4$ &$2^4$  &0.03  &0.04 &7.0  &0.03 &7.0  &\textbf{0.03} &\textbf{3.0} \\
1.9 &$2^5$ &$2^5$  &0.21  &0.24 &7.0  &0.26 &7.0  &\textbf{0.22} &\textbf{3.0} \\
    &$2^6$ &$2^6$  &2.91  &1.30 &6.0  &1.35 &6.0  &\textbf{1.24} &\textbf{2.0} \\
    &$2^7$ &$2^7$  &65.39 &10.57&5.0  &9.78 &5.6  &\textbf{9.01} &\textbf{2.0} \\
\hline
\end{tabular}
\label{tab10}
\end{center}
\end{table}

\begin{figure}[!hbt]
  \centering
  \subfigure{
    \includegraphics[width = 6.7 cm]{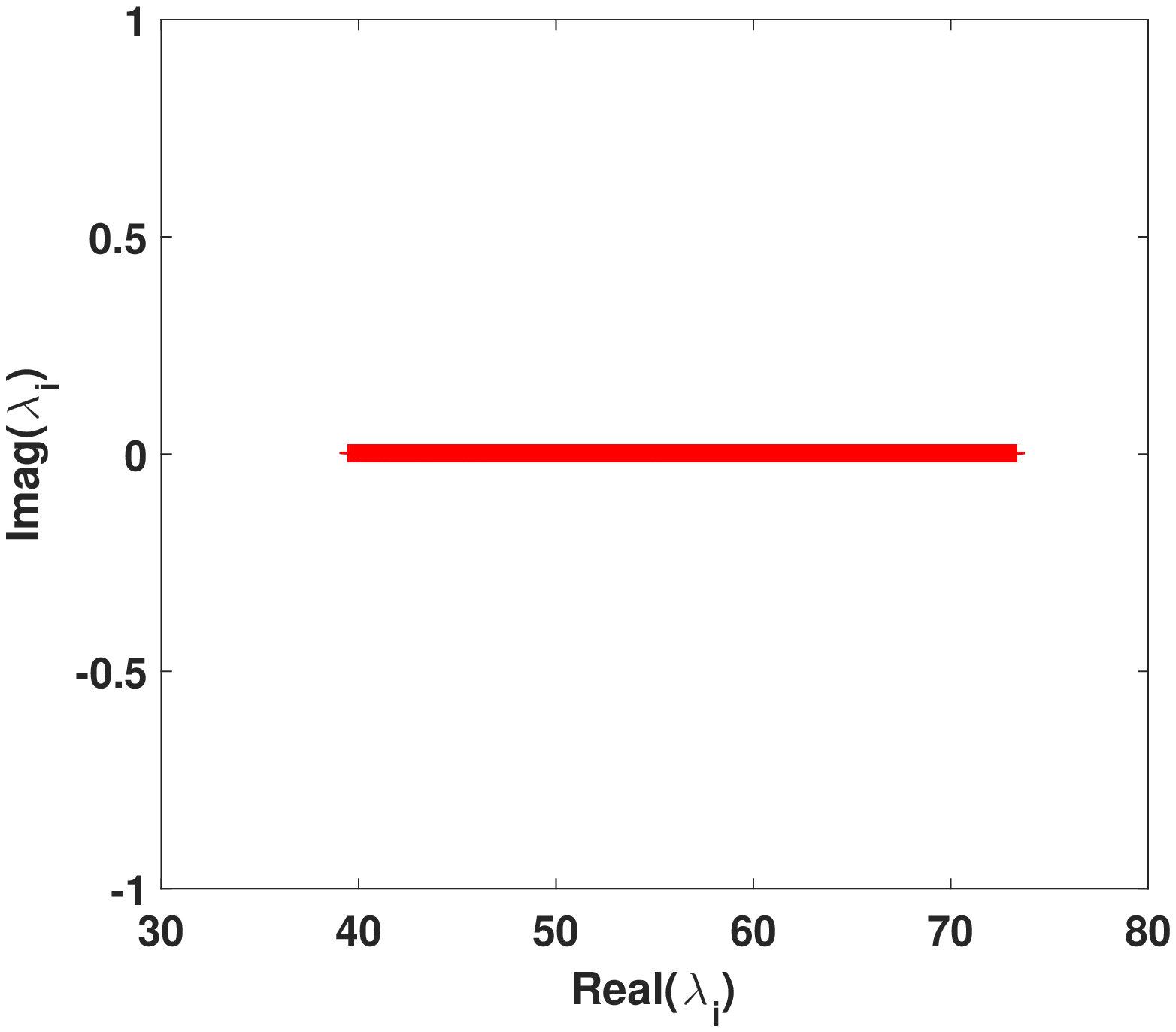}}
     \hspace{1 cm}
  \subfigure{
    \includegraphics[width = 6.7 cm]{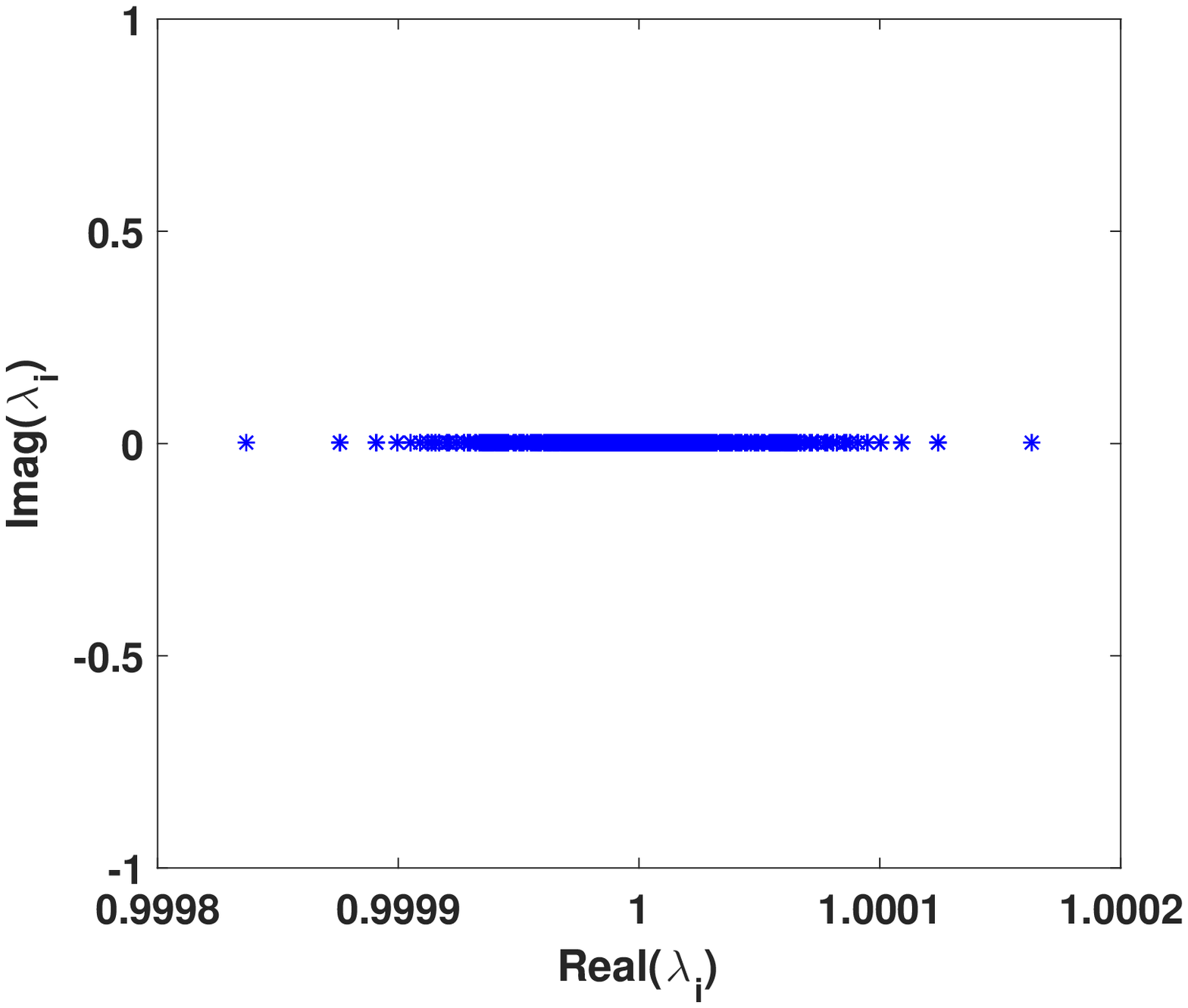}}\\
    (\textbf{a}) Spectrum of $A_2$ \hspace{4.6cm} (\textbf{b}) Spectrum of ${K}_l^{-1}A_2$ \\
  \caption{Spectrum of both original (\textbf{a}) and preconditioned (\textbf{b}) matrices for Example \ref{example2}, where $T$ = 1.5, $J$ = 50, $\beta$ = $\gamma$ = 1.5 and $\tilde{M}$ = $N$ = 64.}
  \label{fig6}
\end{figure}
\begin{figure}[!hbt]
  \centering
  \subfigure{
    \includegraphics[width = 6.7 cm]{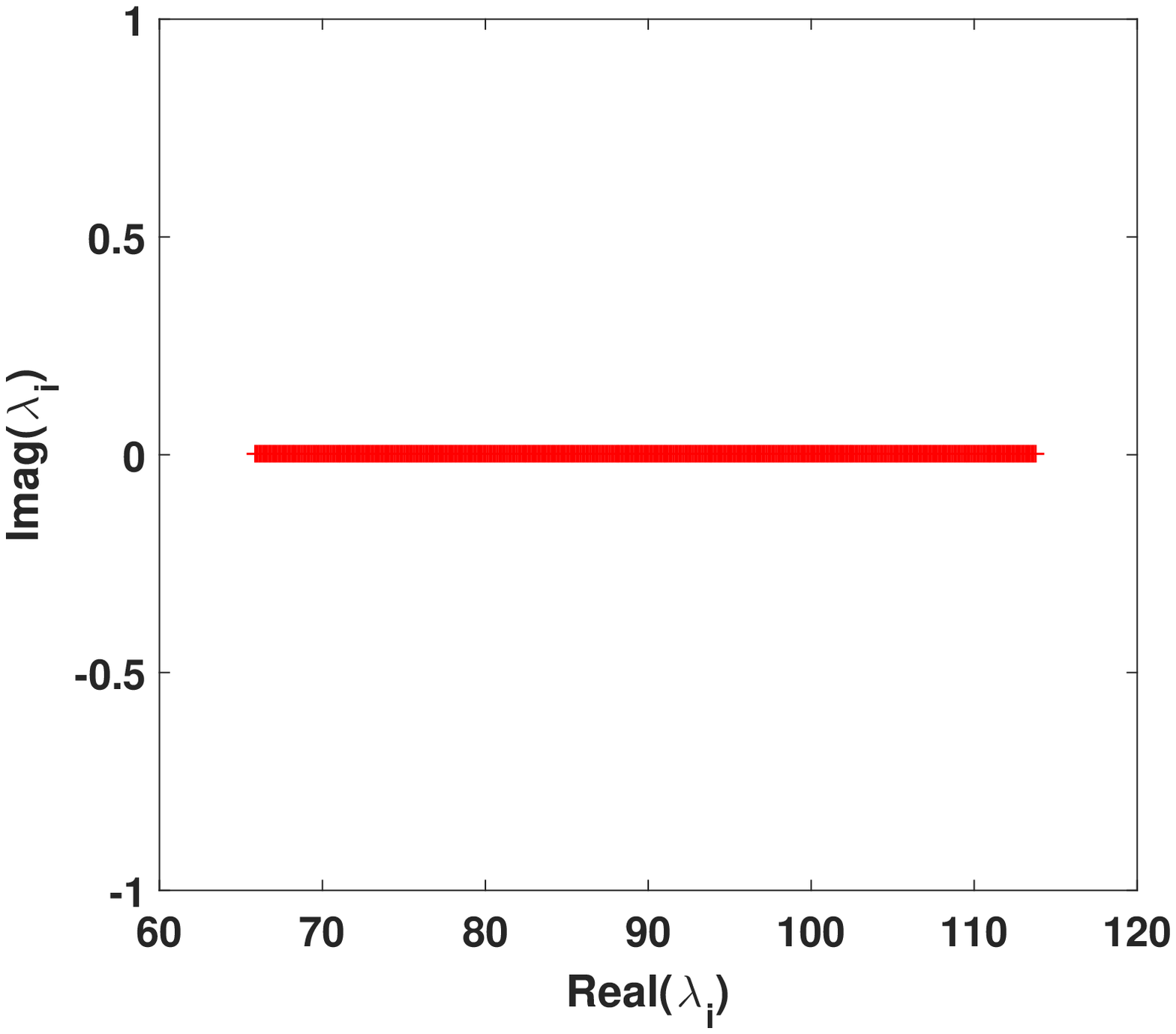}}
     \hspace{1 cm}
  \subfigure{
    \includegraphics[width = 6.7 cm]{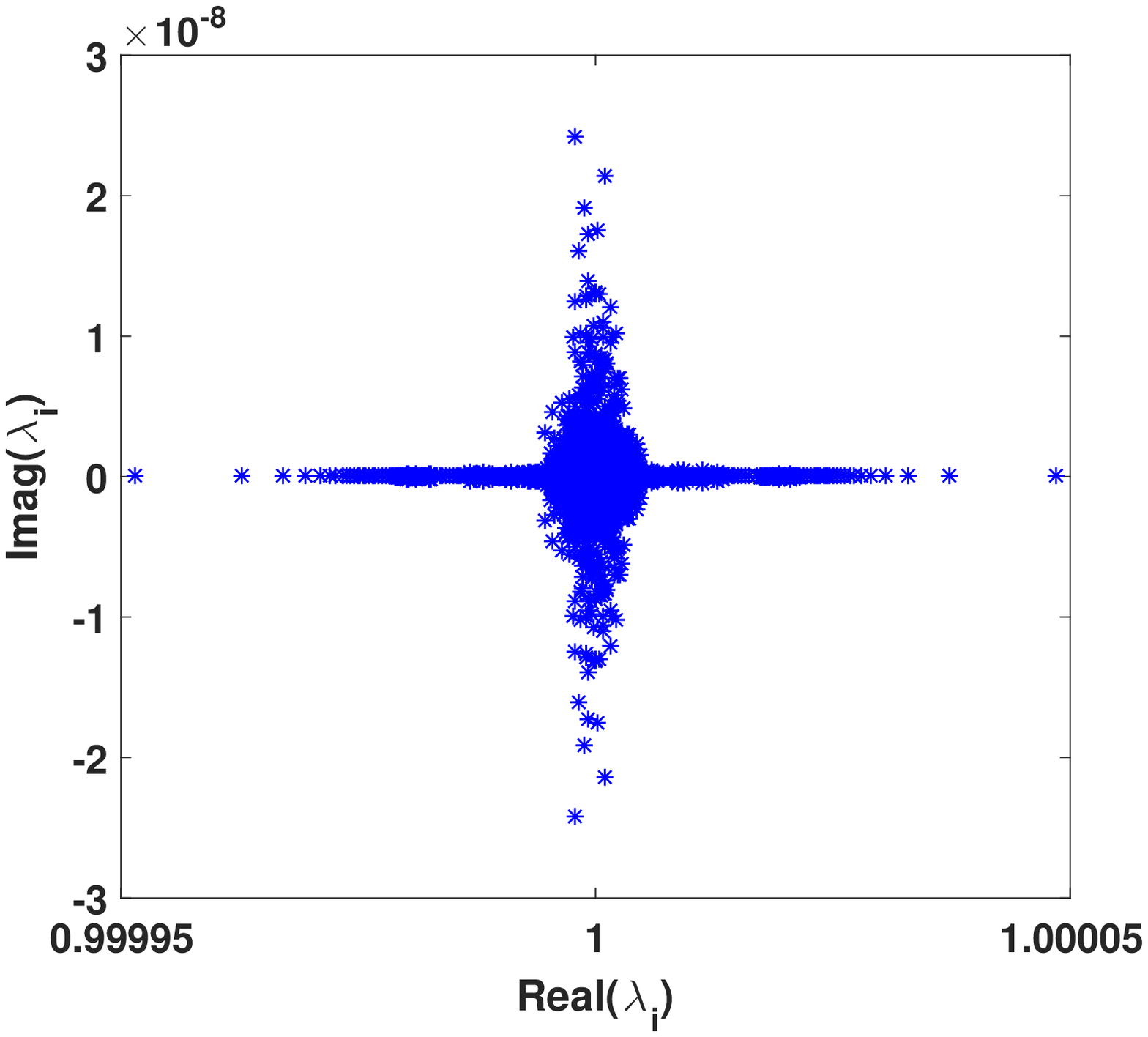}}\\
    (\textbf{a}) Spectrum of $A_2$ \hspace{4.6cm} (\textbf{b}) Spectrum of ${K}_l^{-1}A_2$ \\
  \caption{Spectrum of both original (\textbf{a}) and preconditioned (\textbf{b}) matrices for Example \ref{example2}, where $T$ = 1.5, $J$ = 50, $\beta$ = $\gamma$ = 1.5 and $\tilde{M}$ = $N$ = 128.}
  \label{fig7}
\end{figure}

\begin{table}[t]\small\tabcolsep=9.0pt
\begin{center}
\caption{{\small {Comparisons for solving Example \ref{example2} between the Cholesky, PCG, and GL-PCG methods with different $\beta$ and preconditioners, where $l=\frac{M}{2}$, $J$ = 50 and $T$ = 10.}}}
\begin{tabular}{cccccccccc}
\\
\hline & & &\multicolumn{1}{c}{$\rm{Chol}$} & \multicolumn{2}
{c}{$\rm{PCG(R_2)}$}& \multicolumn{2}{c}{GL-PCG($R_2$)} & \multicolumn{2}
{c}{GL-PCG($\mathcal{\tilde{L}}_l$)} \\
[-2pt] \cmidrule(lr){4-4} \cmidrule(lr){5-6} \cmidrule(lr){7-8} \cmidrule(lr){9-10} \\ [-11pt]
 $\beta$=$\gamma$  & $\widetilde{M}$ & $N$ & $\rm{CPU}$ & $\rm{CPU}$ & $\rm{Iter}$ & $\rm{CPU}$ & $\rm{Iter}$ & $\rm{CPU}$ & $\rm{Iter}$ \\
\hline
    &$2^3$ &$2^3$  &0.00  &0.02 &7.0  &0.01 &7.0  &\textbf{0.01} &\textbf{5.0} \\
    &$2^4$ &$2^4$  &0.03  &0.04 &8.0  &0.04 &8.0  &\textbf{0.04} &\textbf{5.0} \\
1.2 &$2^5$ &$2^5$  &0.20  &0.25 &8.0  &0.32 &8.0  &\textbf{0.24} &\textbf{4.0} \\
    &$2^6$ &$2^6$  &2.92  &1.39 &8.0  &1.63 &8.0  &\textbf{1.56} &\textbf{4.0} \\
    &$2^7$ &$2^7$  &65.62 &11.30&7.0  &11.28&7.0  &\textbf{10.39}&\textbf{3.0} \\
  \\
    &$2^3$ &$2^3$  &0.00  &0.01 &7.0  &0.01 &7.0  &\textbf{0.01} &\textbf{5.0} \\
    &$2^4$ &$2^4$  &0.03  &0.04 &9.0  &0.04 &9.0  &\textbf{0.03} &\textbf{4.0} \\
1.5 &$2^5$ &$2^5$  &0.20  &0.28 &9.9  &0.37 &9.9  &\textbf{0.27} &\textbf{4.0} \\
    &$2^6$ &$2^6$  &2.88  &1.42 &9.0  &1.65 &9.0  &\textbf{1.58} &\textbf{4.0} \\
    &$2^7$ &$2^7$  &65.09 &12.39&9.0  &12.37&9.0  &\textbf{10.75}&\textbf{3.0} \\
 \\
    &$2^3$ &$2^3$  &0.00  &0.01 &7.0  &0.01 &7.0  &\textbf{0.01} &\textbf{4.0}  \\
    &$2^4$ &$2^4$  &0.03  &0.04 &9.0  &0.04 &9.0  &\textbf{0.03} &\textbf{4.0} \\
1.9 &$2^5$ &$2^5$  &0.20  &0.29 &11.0 &0.38 &11.0 &\textbf{0.24} &\textbf{3.0} \\
    &$2^6$ &$2^6$  &2.89  &1.55 &12.0 &1.83 &12.0 &\textbf{1.51} &\textbf{3.0} \\
    &$2^7$ &$2^7$  &65.13 &14.01&12.0 &13.88&12.0 &\textbf{10.73}&\textbf{3.0} \\
\hline
\end{tabular}
\label{tab11}
\end{center}
\end{table}

To verify the convergence order of the proposed difference scheme \eqref{6.11}-\eqref{6.13}, in Tables \ref{tab7}-\ref{tab9}, take $h_1=h_2=\widetilde{h}$, and $M_1=M_2=\widetilde{M}$.
Let
$$e(\widetilde{h},\tau,\Delta\alpha)=\max\limits_{{0\leq i\leq M_1},~{0\leq j\leq M_2}\atop{0\leq n\leq N}}
|u(x_i,y_j,t_n, \Delta\alpha)-u_{ij}^n|,$$
where $u(x_i,y_j,t_n,\Delta\alpha)$ and $u_{ij}^n$ represent the exact and numerical solutions at step sizes $\widetilde{h}$, $\tau$ and $\Delta\alpha$, respectively. The convergence orders are defined as
$$\widetilde{rate}_h=\log_2\frac{e(\widetilde{h},\tau,\Delta\alpha)}{e(\widetilde{h}/2,\tau,\Delta\alpha)},~
\widetilde{rate}_{\tau}=\log_2\frac{e(\widetilde{h},\tau,\Delta\alpha)}{e(\widetilde{h},\tau/2,\Delta\alpha)},~
\widetilde{rate}_{\Delta\alpha}=\log_2\frac{e(\widetilde{h},\tau,\Delta\alpha)}{e(\widetilde{h},\tau,{\Delta\alpha}/2)}.$$

Tables \ref{tab10}-\ref{tab12} are to verify the efficiency of the proposed GL-PCG method with the  truncated preconditioner described in Section \ref{section6.2}. The resultant Sylvester matrix equations \eqref{6.16} of the 2D case are solved by using the Cholesky method, the PCG method, and the proposed GL-PCG method, respectively.
The stopping criterions of the PCG and GL-PCG algorithms are
$${\|r^{(k)}\|_2}/{\|r^{(0)}\|_2} < 10^{-9}~\text{and}~
 {\|R_k\|_F}/{\|R_0\|_F} < 10^{-9},$$
respectively, where $R_k$ is the $k$-th residuals of the GL-PCG algorithm, and the initial guess in each time level is given as the zero matrix.

Besides the proposed truncated preconditioner $\mathcal{\tilde{L}}_l$, we also test the R. Chan's-based BCCB preconditioner:
\begin{equation*}
 R_2=\mu_0I_1+I_2\otimes K_1\nu_{\beta}r(G_\beta)+K_2\nu_{\gamma}r(G_\gamma)\otimes I_3.
\end{equation*}

\begin{table}[t]\small\tabcolsep=9.0pt
\begin{center}
\caption{{\small {Comparisons for solving Example \ref{example2} between the Cholesky, PCG, and GL-PCG methods with different $\beta$ and preconditioners, where $l=\frac{M}{2}$, $J$ = 300 and $T$ = 1.5.}}}
\begin{tabular}{cccccccccc}
\\
\hline & & &\multicolumn{1}{c}{$\rm{Chol}$} & \multicolumn{2}
{c}{$\rm{PCG(R_2)}$}& \multicolumn{2}{c}{GL-PCG($R_2$)} & \multicolumn{2}
{c}{GL-PCG($\mathcal{\tilde{L}}_l$)} \\
[-2pt] \cmidrule(lr){4-4} \cmidrule(lr){5-6} \cmidrule(lr){7-8} \cmidrule(lr){9-10} \\ [-11pt]
 $\beta$=$\gamma$  & $\widetilde{M}$ & $N$ & $\rm{CPU}$ & $\rm{CPU}$ & $\rm{Iter}$ & $\rm{CPU}$ & $\rm{Iter}$ & $\rm{CPU}$ & $\rm{Iter}$ \\
\hline
    &$2^3$ &$2^3$  &0.00  &0.01 &5.8  &0.01 &5.6  &\textbf{0.02} &\textbf{4.0} \\
    &$2^4$ &$2^4$  &0.03  &0.03 &5.0  &0.03 &5.0  &\textbf{0.03} &\textbf{3.0} \\
1.2 &$2^5$ &$2^5$  &0.20  &0.22 &5.0  &0.23 &5.0  &\textbf{0.22} &\textbf{3.0} \\
    &$2^6$ &$2^6$  &2.89  &1.20 &4.0  &1.25 &4.0  &\textbf{1.33} &\textbf{3.0} \\
    &$2^7$ &$2^7$  &65.50 &9.14 &3.0  &8.56 &3.0  &\textbf{9.05} &\textbf{2.0} \\
  \\
    &$2^3$ &$2^3$  &0.00  &0.01 &6.0  &0.01 &6.0  &\textbf{0.01} &\textbf{4.0} \\
    &$2^4$ &$2^4$  &0.03  &0.03 &6.0  &0.03 &6.0  &\textbf{0.03} &\textbf{3.0} \\
1.5 &$2^5$ &$2^5$  &0.20  &0.23 &6.0  &0.23 &5.0  &\textbf{0.22} &\textbf{3.0} \\
    &$2^6$ &$2^6$  &2.90  &1.24 &5.0  &1.31 &5.0  &\textbf{1.33} &\textbf{3.0} \\
    &$2^7$ &$2^7$  &65.60 &9.68 &4.0  &8.99 &4.0  &\textbf{9.05} &\textbf{2.0} \\
 \\
    &$2^3$ &$2^3$  &0.00  &0.01 &7.0  &0.01 &7.0  &\textbf{0.01} &\textbf{3.0}  \\
    &$2^4$ &$2^4$  &0.03  &0.03 &7.0  &0.03 &7.0  &\textbf{0.03} &\textbf{3.0} \\
1.9 &$2^5$ &$2^5$  &0.20  &0.24 &7.0  &0.26 &7.0  &\textbf{0.21} &\textbf{3.0} \\
    &$2^6$ &$2^6$  &2.88  &1.29 &6.0  &1.35 &6.0  &\textbf{1.23} &\textbf{2.0} \\
    &$2^7$ &$2^7$  &65.56 &10.56&5.6  &9.74 &5.6  &\textbf{9.03} &\textbf{2.0} \\
\hline
\end{tabular}
\label{tab12}
\end{center}
\end{table}

Figs. \ref{fig4}-\ref{fig5} present the comparisons of the exact and numerical solutions of the difference scheme \eqref{6.11}-\eqref{6.13} for Example \ref{example2} with different $\beta$, $\gamma$ and $T$. We can see that the numerical solutions are in good agreement with the exact solutions.

 The maximum errors and convergence orders of the numerical scheme \eqref{6.11}-\eqref{6.13} for Example \ref{example2} in space, time, and distributed order are displayed in Tables \ref{tab7}-\ref{tab9}, respectively. One can be seen from these tables is that the convergence orders of the scheme \eqref{6.11}-\eqref{6.13} are two. The numerical convergence orders are in accordance with the expected ones.

Figs. \ref{fig6}-\ref{fig7} show the distributions of the eigenvalues of both the original matrix $A_2$ and the proposed truncated preconditioned matrix ${K}_l^{-1}A_2$
($l=\frac{M}{2}$) for Example \ref{example2}. Clearly, the eigenvalues of ${K}_l^{-1}A_2$ are well grouped around 1 and separated away from 0, which confirms that the proposed truncated preconditioner exhibits very nice clustering properties.

Tables \ref{tab10}-\ref{tab12} report the numerical results for Example \ref{example2} by the Cholesky, PCG and GL-PCG methods with preconditioners $\mathcal{\tilde{L}}_l$ and $R_2$ with $l=\frac{M}{2}$.
We can see that the GL-PCG method with preconditioner $\mathcal{\tilde{L}}_l$ exhibits excellent performance. Specifically, the average number of iterations of the GL-PCG($\mathcal{\tilde{L}}_l$) method is much smaller than that of the PCG($R_2$) and GL-PCG($R_2$) methods, and it will not increase with the refinement of the spatial grids.
It also shows that the performances of the PCG($R_2$) and GL-PCG($R_2$) methods are almost the same in terms of the CPU time and the average number of iterations.
The performance of the proposed truncated preconditioner is better than the BCCB preconditioner.

\section{Conclusion}\label{section8}
In this paper, we propose efficient difference methods to solve the time distributed-order and Riesz space fractional diffusion-wave equations with initial-boundary value condition. The unconditional stability and second-order convergence in time, space, and distributed-order of the difference schemes are analyzed. The 1D discretizations lead to SPD Toeplitz linear systems, which are solved by the PCG-based GSF method with R. Chan's circulant preconditioner.
In the 2D case, the GL-PCG method with a truncated preconditioner is designed to solve the discretized SPD Sylvester matrix equations.
Then we show that the convergences of the proposed iterative algorithms are very fast by proving the spectrums of the preconditioned matrices are clustered around one.
Numerical experiments are carried out to demonstrate the effectiveness of the proposed numerical methods.
In future work, We will work on developing other fast iterative algorithms and new efficient preconditioners to accelerate the convergence of numerical methods.

\section*{Acknowledgments}
This work is supported by NSFC (61772003 and 11801463), the Applied Basic Research Project of Sichuan Province (20YYJC3482), and the Fundamental Research Funds for the Central Universities (JBK1902028).


\end{document}